\documentclass[12pt,reqno]{amsart}
\usepackage{amsmath,amssymb,amsfonts,amsthm,mathrsfs, url}
\usepackage[colorlinks=true,linkcolor=blue]{hyperref}
\usepackage{graphicx}
\usepackage[usenames,dvipsnames]{color}
\usepackage{enumerate}
\usepackage[margin=1in]{geometry}
\usepackage[normalem]{ulem}
\usepackage{bbm}

\usepackage{tikz}
\usepackage{caption}
\usepackage{subcaption}

\usetikzlibrary{decorations.pathreplacing}
\usetikzlibrary{decorations.markings}
\newcommand{\x}{.5}

\newcommand{\y}{1}

\tikzstyle{vert}=[shape=circle,draw=black,fill=white, inner sep=.5mm]
\tikzstyle{arrow}= [thick,decoration={markings,mark=at position 0.6 with {\arrow{>}}}]

\theoremstyle{plain}
\newtheorem{thm}{Theorem}
\newtheorem{lem}[thm]{Lemma}
\newtheorem{cor}[thm]{Corollary}

\newtheorem{remark}[thm]{Remark}
\newtheorem{question}{Question}
\newtheorem{conj}{Conjecture}
\theoremstyle{definition}
\newtheorem{defn}{Definition}
\newtheorem{RE}{Random Experiment}
\theoremstyle{remark}
\newtheorem{claim}{Claim}

\newcommand{\rbrac}[1]{\left(#1\right)} 
\newcommand{\sbrac}[1]{\left[ #1\right]} 

\newcommand{\diam}{\operatorname{diam}}

\newcommand{\real}{\ensuremath {\mathbb R} }	
\newcommand{\ent}{\ensuremath {\mathbb Z} }
\newcommand{\nat}{\ensuremath {\mathbb N} }

\newcommand{\mbf}[1] {\text{\boldmath$#1$}}
\newcommand{\remove}[1] {}

\newcommand{\ex} {{\bf E}} 
\newcommand{\pr} {{\bf Pr}}

\newcommand{\cC} {\ensuremath{\mathcal C}}
\newcommand{\cE} {\ensuremath{\mathcal E}}
\newcommand{\cG} {\ensuremath{\mathcal G}}
\newcommand{\cH} {\ensuremath{\mathcal H}}
\newcommand{\cU} {\ensuremath{\mathcal U}}

\newcommand{\bX} {\ensuremath{\mbf X}}
\newcommand{\Qd} {\ensuremath{{\mathcal{Q}_d}}}
\newcommand{\Td} {\ensuremath{{\mathcal{T}_d}}}

\newcommand{\Tone} {\ensuremath{{\mathcal{T}_1}}}

\newcommand{\eps}{\varepsilon}

\newcommand{\suma}[1]{\sum_{\substack{#1}}}

\DeclareMathOperator{\dist}{dist}

\DeclareMathOperator{\Bin}{Bin}
\DeclareMathOperator{\vol}{vol}

\renewcommand{\l}{\ell}

\title[Almost-perfect packings and Tuza's conj. in the random geometric graph]{Almost-perfect packings and Tuza's conjecture in the random geometric graph}
\author{Patrick Bennett}
\address{Department of Mathematics, Western Michigan University, Kalamazoo, MI, USA}
\thanks{The first author was supported in part by Simons Foundation Grant \#848648.}
\email{\tt patrick.bennett@wmich.edu}

\author{Ryan Cushman}\thanks{}
\address{Department of Mathematics,
University of Tennessee, Knoxville, Knoxville, TN, USA} 
\email{\tt rcushma3@utk.edu}

\author{Andrzej Dudek}
\address{Department of Mathematics, Western Michigan University, Kalamazoo, MI, USA}
\thanks{The third author was supported in part by Simons Foundation Grant MPS-TSM-00007551.}
\email{\tt andrzej.dudek@wmich.edu}

\author{Xavier P\'erez-Gim\'enez}
\address{Department of Mathematics, University of Nebraska-Lincoln, Lincoln, NE, USA}
\thanks{The fourth author was supported in part by NSF grant DMS2201590.}
\email{\tt xperez@unl.edu}

\date{}

\begin{document}
\maketitle

\begin{abstract}
    The triangle packing number $\nu(G)$ of a graph $G$ is the maximum size of a set of edge-disjoint triangles in $G$. Tuza conjectured that in any graph $G$ there exists a set of at most $2\nu(G)$ edges intersecting every triangle in $G$. We show that Tuza's conjecture holds in the random geometric graph for a large range of densities. We also study the problem of covering almost all edges of the random geometric graph with edge-disjoint copies of some fixed graph $F$.
In particular, we show the existence of almost-perfect packings for an infinite family of $F$, and state some negative results as well.
\end{abstract}

\section{Introduction}
A {\em triangle packing} is a collection of edge-disjoint triangles in a graph. A triangle packing is {\em perfect} if it uses all of the edges in the graph.
For a graph $G$, the {\em triangle packing number} $\nu(G)$ is the size of the largest triangle packing in $G$. The {\em triangle cover number} $\tau(G)$ is the size of the smallest set of edges that covers all triangles in $G$. Clearly, $\nu(G)\le \tau(G)\le 3\nu(G)$. Tuza \cite{tuza1, tuza2} conjectured the following improved upper bound.
\begin{conj}[Tuza's Conjecture]\label{tuzaconj} 
 For every graph $G$, $\tau(G) \le 2 \nu(G)$.
\end{conj}
This conjecture has since attracted considerable attention and there are partial results and generalizations. The conjecture is tight for the complete graphs of orders 4 and 5. Baron and Kahn~\cite{BK} showed (disproving a conjecture of Yuster~\cite{Yuster}) that there are arbitrarily large graphs $G$ of edge density bounded away from zero satisfying $\tau(G) \ge (2-o(1))\nu(G)$. Hence, in general, the multiplicative constant 2 in the Tuza's conjecture cannot be improved.
 The best known general upper bound is due to Haxell~\cite{haxell}, who proved that $\tau(G) \le \frac{66}{23} \nu( G)$.  For more related results for deterministic graphs, the interested reader can consult~\cite{AZ, HR, krivelevich}. 

 Tuza's conjecture has also been studied for random graphs, and in particular the conjecture is known to be true for ``almost all'' graphs. We say that an event $\cE(n)$ happens \textit{asymptotically almost surely (a.a.s.)} if $\pr[\cE(n)] \rightarrow 1$ as $n \rightarrow \infty$.  Kahn and Park \cite{KP} proved that for every function $p=p(n) \in [0, 1]$, a.a.s.\ the Erd\H{o}s-R\'enyi random graph $G(n, p)$ satisfies Tuza's Conjecture. Independently the first, second and third authors~\cite{BCD} showed that for every $m \le \binom n2$, a.a.s.\ the random graph $G(n, m)$ satisfies Tuza's conjecture. This improved upon previous work by the first and third authors and Zerbib~\cite{BDZ}, who proved that Tuza's conjecture holds a.a.s.\ in $G(n, m)$ except for possibly some values $m = \Theta(n^{3/2})$.

In this note we investigate Tuza's conjecture for random geometric graphs (defined in the next paragraph). 
We also investigate the more general problem of finding large packings of a fixed graph $F$ in a random geometric graph. Here an {\em $F$-packing} (or {\em packing}, for short) denotes a collection of edge-disjoint copies of $F$. We say an $F$-packing in $G$ is perfect if it has size $m_G/m_F$ (where $m_G$ denotes the number of edges in $G$).
In analogy to the triangle case, the {\em $F$-packing number} $\nu_F(G)$ of a graph $G$ is the size of the largest $F$-packing in $G$. When $F$ is the $t$-clique $K_t$, we simply write $\nu_t(G)=\nu_{K_t}(G)$. When $F$ is a triangle, we omit the subscript and write $\nu(G)$.

Let $\Qd=[0,1]^d$ be the unit $d$-cube.
Given a set of points $\bX \subseteq \Qd$ and $r\ge0$, the {\em geometric graph $\cG_{\Qd}(\bX,r)$ on the cube} is the graph with vertex set $\bX$ and with any two points $X,Y\in\bX$ connected by an edge if their Euclidean distance $\| X-Y\|_2$ is at most $r$. We also consider a toroidal version of this model. Let $\Td=\Qd/\sim$ be the unit $d$-torus resulting from identifying any two points in the cube $\Qd$ whose coordinates differ by an integer. We equip $\Td$ with the usual toroidal distance
\[
\dist_\Td(X,Y) = \min_{Z\in\ent^d} \| X-Y+Z\|_2.
\]
The {\em geometric graph $\cG_{\Td}(\bX,r)$ on the torus} is defined analogously to $\cG_{\Qd}(\bX,r)$ but with $\bX \subseteq \Td$ and by connecting any two points $X,Y\in\bX$ by an edge if $\dist_\Td(X,Y)\le r$.
Note that $\cG_{\Qd}(\bX,r) \subseteq \cG_{\Td}(\bX,r)$ since the latter may contain additional edges that go ``around'' the torus (or, more precisely, since $\dist_\Td(X,Y) \le \|X-Y\|_2$).
We are interested in the case where $\bX$ is random. More precisely, let $\bX=\{X_1,\ldots,X_n\}$ where $X_1,\ldots,X_n \in [0,1]^d$ are chosen uniformly at random and independently.
We will interpret $\bX$ as a random subset of $\Qd$ or $\Td$ depending on the context (with the mild abuse of notation of equating $[0,1]^d$ and $[0,1]^d/\sim$ in the toroidal case). Note that, in either case, $\bX$ consists of $n$ distinct points with probability one. For random $\bX$ and $*\in\{\Qd,\Td\}$, we call  $\cG_*(\bX,r)$ the {\em random geometric graph} ({\em on the cube} for $*=\Qd$ and {\em on the torus} for $*=\Td$).

For the classical Erd\H os-R\'enyi random graph models $G(n, p)$ and $G(n, m)$, the aforementioned authors in~\cite{ BCD,BDZ, KP} found that for sparse enough graphs, almost all triangles are edge disjoint, meaning that $\tau \approx \nu$. This situation persists until we have $\Theta(n^{3/2})$ edges. Once we have much more than $n^{3/2}$ edges, we get the following different situation. First, $\tau$ will be about half the number of edges $m$. Of course $\tau$ is always at most $\frac 12 m$ since we can make the graph bipartite (and therefore triangle-free) by removing at most $\frac 12 m$ edges, and when $m \gg n^{3/2}$ we find that one cannot do much better. Meanwhile we always have $\nu \le \frac 13 m$ since every triangle uses 3 edges. When $m \gg n^{3/2}$ we have that every edge is in many triangles, at which point we can find a triangle packing using almost all the edges, meaning that $\nu \approx \frac13 m$. As a result, when $m \gg n^{3/2}$ we have that $\tau$ is about $\frac 32 \nu$. Thus, for most densities the random graphs $G(n, p)$ and $G(n, m)$ actually satisfy Tuza's conjecture with some room to spare, and the more interesting density is when $m = \Theta(n^{3/2})$. 

For the random geometric graph we will observe a similar phenomenon. When the graph is sparse enough, we will have $\tau \approx \nu$ since almost all triangles are edge-disjoint. When the graph is dense enough, we can cover almost all edges and so $\nu \approx \frac{m}{3}$. Thus $\tau \le \frac{m}{2} \approx \frac{3}{2} \nu$. For the random geometric graph, the interesting ``in-between'' range is when $r= \Theta(n^{-1/d})$, and we will show that a.a.s.\ Tuza's conjecture holds for the random geometric graph except for possibly when $r$ is in that range (but we avoid discussing the case where $r$ is more than 1/2).

Our first theorem estimates $\nu$ in the sparse regime. Of course, we always have $\nu(G) \le t$ for any graph $G$ with $t$ triangles.

\begin{thm}\label{thm:sparse}
For each $d \ge 1$, $*\in\{\Qd,\Td\}$ and real $\delta > 0$ there is a constant $\kappa_1=\kappa_1(d, \delta)>0$  so that the following holds. For $ r < \kappa_1n^{-1/d}  $ we have that a.a.s. 
\[
\nu( \cG_{*}(\bX,r) ) \ge (1-\delta)t(\cG(\bX,r)),
\]
where $t(\cG_*(\bX,r))$ is the number of triangles of $\cG_*(\bX,r)$.
\end{thm}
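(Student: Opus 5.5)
The plan is to show that almost all triangles are edge-disjoint from every other triangle. Concretely, I will bound the number of pairs of distinct triangles sharing an edge by a small multiple of the number of triangles. Call a triangle \emph{bad} if it shares an edge with some other triangle. Deleting every bad triangle leaves a triangle packing, so
\[
\nu \ge t - \#\{\text{bad triangles}\} \ge t - 2P,
\]
where $P$ is the number of unordered pairs of distinct triangles sharing an edge. Such a pair spans exactly four vertices, namely a $K_4$ minus an edge or a $K_4$. Each bad triangle lies in at least one such pair, and each pair contains two triangles. It therefore suffices to show that a.a.s.\ $P \le (\delta/2) t$. Throughout I assume $t\to\infty$ in the relevant regime; otherwise the statement has to be read with $t$ possibly $0$, and I would handle that case via the Markov bound below.

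\textbf{First moments.} Write $\mu = n r^d$, and take $r$ small enough that boundary effects only change constants, which is automatic since $r\to 0$ here. Then
\[
\ex t = \Theta(n^3 r^{2d}) = \Theta(n\mu^2),
\]
because we fix one point and require the other two within distance $r$ of it. Likewise, the expected number of $4$-sets spanning at least $5$ edges is
\[
\ex P = O(n^4 r^{3d}) = O(n\mu^3),
\]
since all four points must lie within distance $2r$ of the first. Hence $\ex P \le C_d\, \mu\, \ex t \le C_d \kappa_1^d\, \ex t$, where $C_d$ depends only on $d$ and is uniform over the cube and the torus. Choosing $\kappa_1$ with $C_d\kappa_1^d \le \delta/8$, Markov's inequality gives $P \le (\delta/4)\ex t$ with probability at least $1/2$. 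That is not yet a.a.s., so a sharper estimate is needed.

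\textbf{Concentration.} The main obstacle is to show that $t \ge \tfrac12 \ex t$ a.a.s.\ and $P \le (\delta/2)\ex t \cdot \tfrac12$ a.a.s., in the regime where $\ex t\to\infty$. For $t$, I would use a second moment computation. The variance is a sum over pairs of triangles sharing at least one vertex, and it evaluates to
\[
\mathrm{Var}\, t = O(\ex t + n^5 r^{4d} + n^4 r^{3d}) = O\big(\ex t\,(1+\mu^2+\mu)\big) = O(\ex t)
\]
when $\mu$ is bounded. Chebyshev then gives $t=(1+o(1))\ex t$ whenever $\ex t\to\infty$. For $P$ the required bound is one-sided. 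Either Markov applied to $P$ against a threshold $\omega\,\ex P$, combined with $\ex P\le C\kappa_1^d \ex t$, suffices (by shrinking the constant, since Markov only gives probability $1-1/\omega$), or, more cleanly, the same second-moment method yields $\mathrm{Var}\,P=O(\ex P+\ldots)$, so that $P \le \ex P + o(\ex t)$ a.a.s.

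To get this, I will bound $\mathrm{Var}\,P$ by summing over pairs of $4$-sets sharing $j\ge 1$ points. Each such term is $O(n^{8-j} r^{d(7-j)})$, which is $O(n\mu^{7-j})$, and this is $O(\ex t)=O(n\mu^2)$ for $\mu$ bounded. So $\mathrm{Var}\,P=O(\ex t)=o((\ex t)^2)$ whenever $\ex t\to\infty$, and Chebyshev gives $P \le \ex P + o(\ex t)$. Combining the two estimates yields $P\le (\delta/4+o(1))\ex t \le (\delta/2)\,t$ a.a.s.

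\textbf{Degenerate case.} If $\ex t$ is bounded, then $\ex P = O(\mu\,\ex t)\to 0$ whenever $\mu\to0$, so $P=0$ a.a.s.\ and $\nu=t$. The remaining case, where $\mu$ is a constant below $\kappa_1$ but $\ex t = \Theta(n\mu^2)$ is bounded, is impossible, since then $\ex t\to\infty$. Passing to subsequences covers all cases uniformly.
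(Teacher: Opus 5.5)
Your proposal is correct and follows essentially the same route as the paper: a second-moment estimate giving $t=(1+o(1))\ex t=\Theta(n^3r^{2d})$, a bound of order $nr^d\cdot t$ on the pairs of triangles sharing an edge (the paper's count $Z$ of copies of $K_4^-$), a choice of $\kappa_1$ making this a small multiple of $t$, and discarding every triangle that shares an edge with another. Your version is in fact slightly more careful than the paper's: you track the factor $2$ in $\#\{\text{bad triangles}\}\le 2P$, you need only the one-sided bound $P\le \ex P+o(\ex t)$ (the paper's claim $Z=(1+o(1))\ex Z$ tacitly needs $\ex Z\to\infty$, which fails e.g.\ for $r=n^{-1.4/d}$), and you treat the case of bounded $\ex t$ separately.
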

In particular, for $r=o(n^{-1/d})$, a.a.s.\ $\nu( \cG_*(\bX,r) ) \sim t(\cG_*(\bX,r)).$

 Our next theorem estimates $\nu$ in the dense regime. The theorem says that the upper bound (discussed above) $\nu \le \frac13 m$ is close to the truth in the dense regime.

\begin{thm}\label{thm:dense}
For each $d \ge 1$, $*\in\{\Qd,\Td\}$ and real $\delta > 0$ there are positive constants $\kappa_2=\kappa_2(d, \delta)$, $\kappa_3=\kappa_3(d, \delta)$ (for $*=\Td$, we can pick $\kappa_3=1/2$) so that the following holds. For any $r$ with $\kappa_2n^{-1/d} \le r \le \kappa_3,$ we have that a.a.s.
\[
\nu( \cG_{{*}}(\bX,r) ) \ge (1 - \delta) \frac13 m{(\cG_*(\bX,r))},
\]
where $m(\cG_*(\bX,r))$ is the number of edges of $\cG_*(\bX,r)$.
\end{thm}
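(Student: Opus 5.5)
The plan is to tile the space into small cells, use each cell's clique to absorb edges, and invoke a Pippenger--Spencer / Frankl--Rödl type packing result (or the Kahn--Park / Bennett--Cushman--Dudek results on $G(n,p)$ type structures) locally. Concretely, I partition $\Qd$ (or $\Td$) into a grid of cubes of side $\eps r$, where $\eps=\eps(d,\delta)$ is small. Two points in cells whose centers are at distance at most $r(1-2\sqrt d\eps)$ are surely adjacent, and points in cells at distance more than $r(1+2\sqrt d\eps)$ are surely nonadjacent. Only a $O(\sqrt d\,\eps)$ fraction of cell pairs are "ambiguous". Since $r\ge \kappa_2 n^{-1/d}$ with $\kappa_2$ large in terms of $\eps$, each cell has expected $n(\eps r)^d\ge \kappa_2^d\eps^d$ points, which is a large constant. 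A Chernoff bound plus counting shows that a.a.s.\ all but a $\delta/10$ fraction of edges lie between pairs of cells that each contain $(1\pm\eps)$ times the expected number of points. Near the boundary of the cube I also discard edges within distance $\eps$-fraction; for the cube this is why $r\le\kappa_3$ is needed, so that boundary-affected edges are a small fraction.

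Next I reduce to a weighted packing problem on the cell graph. Let $H$ be the graph on cells with $C\sim C'$ when the pair is surely adjacent (including $C=C'$). The edges of $\cG$ between good cells of surely-adjacent pairs form a blow-up of $H$ with nearly equal part sizes $N$. I want to pack triangles into this blow-up covering almost all its edges. Since $H$ is vertex-transitive-like (translation invariant away from the boundary), the natural approach is to find a fractional triangle decomposition of $H$, with loops allowed. For a pair $(C,C')$ with $\|c-c'\|<r$, triangles $(C,C',C'')$ with $C''$ adjacent to both exist in abundance: the lens $B(c,r)\cap B(c',r)$ has volume at least $c_d r^d$. I would assign uniform-ish weights so that each cell pair gets total weight $1$. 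A symmetric choice is to weight each triple $\{C,C',C''\}$ by $w(\|c-c'\|,\dots)$; it suffices to show that the uniform weighting normalized by the lens volume yields a fractional decomposition up to $1\pm\eps$ factors. An alternative is to use a nibble argument directly. Then I lift the fractional decomposition to the blow-up: for each triple of cells with weight $w$, take a random tripartite triangle packing in $K_{N,N,N}$, namely a Latin-square decomposition with $N^2$ triangles, and include it with a suitable thinning. More cleanly, I apply the Frankl--Rödl / Pippenger--Spencer theorem to the $3$-uniform hypergraph whose vertices are edges of the blow-up and whose hyperedges are triangles. Its degrees are nearly regular precisely when the fractional weights are balanced, and its codegrees are $O(1)$ while degrees are $\Theta(nr^d)\to\infty$. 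This yields a packing covering all but $\delta m/10$ edges.

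The main obstacle is degree regularity. In the triangle hypergraph of $\cG$, the degree of edge $xy$ is the number of common neighbours, roughly $n\vol(B(x,r)\cap B(y,r))$, which varies strongly with $\|x-y\|$ (from about $n\vol B_r$ down to about $c_d n r^d$). So Pippenger--Spencer cannot be applied naively. My first attempt is to fix this by sampling triangles with probability $p_T$ chosen as a fractional decomposition. I would solve for it via an explicit continuous weighting $f(\|x-y\|,\|y-z\|,\|x-z\|)$ on the space of geometric triangles, so that every pair at distance $s\le r$ has integrated weight $1$; existence can be argued through a Haxell--Rödl style result (fractional packing implies integral packing up to $o(n^2)$). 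A cleaner alternative is the result that a fractional triangle decomposition can be converted to an almost-perfect packing losing $o(m)$ edges. That conversion requires dense graphs, so I would apply it inside each block of side $\Theta(r)$, where the local graph has $\Theta(nr^d)$ vertices and is dense, and then handle blocks' overlapping edges by a random shift. Verifying that such a fractional decomposition exists is where I expect the real work to lie: the Euclidean triangle constraint must be handled, and near-threshold pairs with $s\approx r$ must receive enough weight despite their thin lenses.
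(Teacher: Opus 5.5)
Your outer framework is sound and close to the paper's: cells of side $\approx\eps r$, a blow-up of a cell graph with (nearly) equal parts, discarding boundary edges for the cube, then passing from a fractional triangle packing to an integral one via a nibble-type theorem with bounded codegrees. One small correction: at $r=\kappa_2 n^{-1/d}$ the degrees $\Theta(nr^d)$ do not tend to infinity, so you must use a version with large constant degree, obtained by taking $\kappa_2$ large.

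The gap is exactly the step you defer: you never exhibit a fractional triangle packing of total weight $(1-O(\eps))m/3$, and your suggestions do not produce one.
\begin{itemize}
\item A weight ``normalized by the lens volume'' is not well defined, because the three sides of a triangle have different lenses. The triangle-degree of an edge of length $\ell$ is about $n\vol(B(x,r)\cap B(y,r))$, which decreases from $\theta r^d n$ at $\ell=0$ to $\rho_d\theta r^d n$ at $\ell=r$.
\item Dividing by the largest of the three triangle-degrees gives a valid fractional packing. However, almost every triangle through an edge of length near $r$ has a much shorter side, so such edges get load bounded away from $1$, and the total falls a constant factor short of $m/3$.
\item Dividing by the smallest of the three triangle-degrees overloads the short edges.
\item Haxell--R\"odl does not supply existence either: it converts a fractional packing into an integral one, and presupposes that a near-perfect fractional packing already exists.
\end{itemize}
Your diagnosis of the obstacle is also off. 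Lenses are never thin (their volume is at least $\rho_d\theta r^d$). The real obstruction is that edge lengths are not uniform (density $\propto\ell^{d-1}$ on $[0,r]$), so triangles mixing long and short sides exhaust the short edges before the long ones are covered.

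The missing idea is that, by translation and rotation invariance, it suffices to find a probability distribution on triangle shapes (all sides at most $r$) such that a uniformly random side has exactly the length distribution of a random edge. Placing the shape by a uniform random rotation and translation then gives a pair $(T,E)$ whose marginal $E$ is uniform on the edges of $\cG'$. Setting $f(t)=\pr(T=t\mid E=e)$ automatically yields a fractional matching with $\sum_{t\ni e}f(t)=\pr(T\ne\emptyset\mid E=e)$. This sum equals $1$ except for edges of length within $O(d\eps r)$ of $0$ or $r$. The paper realizes this idea as follows:
\begin{itemize}
\item For $d\ge2$ it uses equilateral triangles with side distributed as $|Z|$, $Z$ uniform in $B(0,r)$.
\item For $d=1$ no equilateral triangles exist, so a separate construction is needed. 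It uses collinear triples with $R_3=\max(U_1,U_2,U_3)$, $R_1$ uniform on $[0,R_3]$ and $R_2=R_3-R_1$, for which a random side is uniform on $[0,r]$.
\end{itemize}
Kahn's theorem then finishes the argument. Without some construction of this kind, and in particular without a separate idea for $d=1$, your proposal does not prove the theorem.
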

In particular, for $r=\omega(n^{-1/d})$ and $r=o(1)$, a.a.s.\ $\nu( \cG_{{*}}(\bX,r) ) \sim \frac13 m(\cG_*(\bX,r))$, so we obtain an almost-perfect triangle packing. For the toroidal case, when $*=\Td$, we may choose $\kappa_3=1/2$ for every $d$ and $\delta$. However, for $*=\Qd$, $\kappa_3$ depends on $\delta$ (and $d$) since our argument ignores a $\Theta(dr)$ fraction of the edges that are close to the boundaries of $[0,1]^d$.

In his Bachelor's thesis, Burggraf \cite{Burggraf} recently established a version of Theorem~\ref{thm:dense} for the case $*=\mathcal{Q}_2$ (a result that was unknown to us during the preparation of this manuscript). More generally, he investigated the problem of finding $C_k$-packings in $\mathcal{Q}_2$ for fixed $k$.

Note that if we set $\delta = 1/2$ in Theorem~\ref{thm:sparse} then we get
\[
\nu(\cG_{*}(\bX, r)) \ge t(\cG_*(\bX,r))/2 \ge \tau(\cG_{*}(\bX, r))/2.
\]
Likewise if we set $\delta = 1/4$ in Theorem~\ref{thm:dense} then we get
\[
\nu(\cG_*(\bX, r)) \ge m{(\cG_*(\bX,r))}/4 \ge \tau(\cG_{*}(\bX, r))/2.
\]
Thus we have the following corollary. 
\begin{cor}\label{cor:Tuza}
    For each $d \ge 1$ and $*\in\{\Qd,\Td\}$, there exist {positive} constants $\kappa_1 = \kappa_1(d, 1/2)$, $\kappa_2=\kappa_2(d, 1/4)$ and $\kappa_3=\kappa_3(d, 1/4)$ such that for any $r \in [0, \kappa_3] \setminus [\kappa_1n^{-1/d}, \kappa_2n^{-1/d}]$ we have a.a.s.\ that 
   \[
  \tau(\cG_{*}(\bX, r)) \le 2 \nu(\cG_{*}(\bX, r)).
   \]
   In other words, Tuza's conjecture holds for $\cG_{*}(\bX,r)$.
\end{cor}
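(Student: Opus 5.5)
The plan is to derive Corollary~\ref{cor:Tuza} directly from Theorems~\ref{thm:sparse} and~\ref{thm:dense}, splitting the admissible range of $r$ into two regimes. Fix $d$ and $*$. Let $\kappa_1=\kappa_1(d,1/2)$ be given by Theorem~\ref{thm:sparse}, and let $\kappa_2=\kappa_2(d,1/4)$ and $\kappa_3=\kappa_3(d,1/4)$ be given by Theorem~\ref{thm:dense}. Any $r\in[0,\kappa_3]\setminus[\kappa_1 n^{-1/d},\kappa_2 n^{-1/d}]$ then satisfies either $r<\kappa_1 n^{-1/d}$ or $\kappa_2 n^{-1/d}<r\le\kappa_3$. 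Since $r=r(n)$ may switch between the two regimes as $n$ varies, I will apply each theorem along the subsequence of $n$ where it is relevant. The a.a.s.\ conclusion then holds along each subsequence, and hence along the whole sequence.

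In the sparse regime $r<\kappa_1 n^{-1/d}$, Theorem~\ref{thm:sparse} with $\delta=1/2$ gives a.a.s.\ $\nu(\cG_*(\bX,r))\ge t(\cG_*(\bX,r))/2$. Deterministically, $\tau(G)\le t(G)$ for every graph $G$, since choosing one edge from each triangle gives a cover. Combining the two yields $\tau\le t\le 2\nu$.

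In the dense regime $\kappa_2 n^{-1/d}\le r\le\kappa_3$, Theorem~\ref{thm:dense} with $\delta=1/4$ gives a.a.s.\ $\nu\ge \frac34\cdot\frac13 m=m/4$. Deterministically, $\tau(G)\le m(G)/2$: take a bipartite subgraph containing at least half of the edges, for instance a maximum cut. Every triangle must use an edge outside this bipartite subgraph, so the at most $m/2$ remaining edges cover all triangles. Hence $\tau\le m/2\le 2\nu$.

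There is no real obstacle here. The only points needing care are the max-cut bound $\tau\le m/2$ and the subsequence argument for a function $r(n)$ that moves between the two ranges, both of which are routine.
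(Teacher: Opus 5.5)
Your proposal is correct and is the paper's own argument: Theorem~\ref{thm:sparse} with $\delta=1/2$ gives $\nu\ge t/2\ge\tau/2$, and Theorem~\ref{thm:dense} with $\delta=1/4$ gives $\nu\ge m/4\ge\tau/2$, using $\tau\le t$ and the bipartite-subgraph bound $\tau\le m/2$. Your subsequence argument for an $r(n)$ that switches between the two regimes is valid; it is a detail the paper leaves implicit.
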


The cases $d=1$ and $d\ge2$ of Theorem~\ref{thm:dense} have significant (and interesting) differences. As we were working on the proof,  we realized it was important to pay attention to the (Euclidean) length of the edges in the triangles in our packing. The lengths of edges in the random geometric graph are randomly distributed on $[0, r]$, but for $d \ge 2$ this is not a uniform distribution since longer edges are more likely than shorter edges. So when making a triangle packing, it is dangerous to use triangles that have both long edges and short edges, since we might run out of short edges before we use all our long edges.
In view of this, for $d\ge2$, we had the idea to pack nearly-equilateral triangles, so edges get matched with other edges of similar lengths.
For $d=1$, our idea of packing nearly-equilateral triangles does not work, so we had to come up with a different construction that packs triangles with very different edge lengths.

Moreover, our argument for the $d\ge2$ case inspired some further generalization. Observe that, for $d\ge2$, a triangle can be drawn in $\mathbb{R}^d$ with all edges of the same strictly positive length. (This is not possible for $d=1$.)
Any graph with that same property is called a {\em distance graph} in $\mathbb{R}^d$ (see the precise definition below). It turns out that Theorem~\ref{thm:dense} still holds if we replace triangles by graphs in a large family of graphs that contains distance graphs.
Recall that a graph $G$ is a {\em blowup} of a graph $H$ if $G$ is formed by duplicating each vertex of $H$ some number of times (where we do not require that every vertex is duplicated the same number of times). 

\begin{defn}
    For $d \ge 1$, a distance graph in $\real^d$ is a finite graph whose vertices can be injectively mapped to $\real^d$ so that every pair of adjacent vertices lie at the same strictly positive Euclidean distance. Let $\cU(d)$ be the set of all graphs $F$ such that $F \subseteq F'$, where $F'$ is a blowup of a distance graph in $\mathbb{R}^d$.
\end{defn}
Equivalently, we say $F \in \cU(d)$ if we can draw $F$ in $\mathbb{R}^d$, possibly allowing multiple vertices to occupy the same point, such that all edges of $F$ have the same Euclidean positive length (say, length $1$). We are able to generalize Theorem~\ref{thm:dense} to all graphs in $\cU(d)$. This proves a conjecture of Burggraf \cite{Burggraf}.

\begin{thm}\label{thm:unitdistance}
Let $d\ge1$, $*\in\{\Qd,\Td\}$, $F\in \cU(d)$ and $\delta >0$. Then, there exist positive constants $\kappa_2=\kappa_2(F,d, \delta)$ and $\kappa_3 = \kappa_3(F, d,\delta)$ (with $\kappa_3=\kappa_3(F, d)$ for $*=\Td$) so that the following holds. For any $r$ with $\kappa_2n^{-1/d} \le r \le \kappa_3$ we have that a.a.s.
\[
\nu_F( \cG_{*}(\bX,r) ) \ge (1 - \delta) \frac m{m_F} ,
\]
where $m$ and $m_F$ are the number of edges of $\cG_{*}(\bX,r)$ and $F$, respectively.  
\end{thm}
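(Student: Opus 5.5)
Since $F\subseteq F'$ with $F'$ a blowup of a distance graph $D$ in $\real^d$, it suffices to pack copies of $F'$ almost perfectly, provided we can afterwards decompose (almost all of) each copy of $F'$ into copies of $F$. That is not automatic. Instead I would work directly with $F$ realized as a \emph{unit-length drawing}: a map $\phi:V(F)\to\real^d$, possibly non-injective, with every edge of length exactly $1$. I would then pack copies of $F$ whose edges are all of nearly the same length, generalizing the nearly-equilateral triangle idea for $d\ge2$.

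\textbf{Step 1 (length classes).} Fix a small $\eps=\eps(F,d,\delta)$ and partition $(0,r]$ into intervals $I_j=((j-1)\eps r, j\eps r]$. The edges with length in the first few classes form a negligible fraction of all edges, so we discard them. For each remaining class $j$, look at the subgraph $G_j$ of edges with length in $I_j$. A scaled copy $\lambda\phi$ with $\lambda\in I_j$, placed anywhere in space, has all edge lengths in $I_j$.

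\textbf{Step 2 (tiling).} Partition $\Td$ (or the cube, minus a boundary layer of width $O(r)$, which costs a $\Theta(dr)$ fraction of edges and is why $\kappa_3$ depends on $\delta$ there) into small cells of side $\eta r$ with $\eta\ll\eps$. Because $r\ge\kappa_2 n^{-1/d}$ with $\kappa_2$ large, each cell contains about $n(\eta r)^d\ge K$ points, and a Chernoff bound plus a union bound over cells shows that a.a.s.\ almost all cells have $(1\pm\eps)$ the expected count. When $nr^d\to\infty$ this holds for all cells. In the bounded regime we only use the typical cells and discard the rest, which again loses few edges.

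Edges of $G_j$ are grouped by the ordered pair of cells containing their endpoints, that is, by a displacement vector $v$ with $|v|\approx\lambda$.

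\textbf{Step 3 (reduction to a fractional or hypergraph packing).} Define an auxiliary hypergraph $\cH$ whose vertices are the edges of the geometric graph (after discarding). Its hyperedges are the edge sets of copies of $F$ whose image has the form $x+\lambda R\phi$, up to cell-level error, for some rotation $R$, translation $x$ and $\lambda$ in the class. This is a $m_F$-uniform hypergraph. The plan is to show that $\cH$ is approximately regular, i.e.\ every edge of the geometric graph lies in $(1\pm\eps')D$ such copies, and that pair-degrees are $o(D)$. Then the Pippenger--Spencer / Frankl--Rödl nibble gives an almost-perfect matching, i.e.\ a packing covering all but $\delta m$ edges.

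Codegrees are small because fixing two edges pins down more of the configuration. Near-regularity is the key point. An edge $uv$ of length $\ell$ lies in copies of $F$ where it plays the role of edge $e\in E(F)$. For each $e$, the number of such copies is about $\prod(n\cdot\text{volume})$ over the remaining vertices, integrated over rotations fixing the direction of $uv$, with $\lambda\approx\ell$. By isotropy and translation invariance, this count depends on $\ell$ only through a factor $\ell^{d(|V(F)|-2)}$-type scaling. This is essentially constant within a length class, since $\ell\ge\eps r$ and the class has relative width about $\eps$.

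Repeated vertices of $\phi$ (the blowup) need care. Vertices mapped to the same point must be distinct points of $\bX$ in the same tiny neighborhood. To handle this, I would allow a slack ball of radius $\eta r$ around each image point, which keeps all edge lengths inside $I_j$ up to $O(\eta r)$; this is why $\eta\ll\eps$. Using slightly fattened classes keeps the count regular up to $1\pm O(\eta/\eps)$.

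\textbf{Main obstacle.} The hardest step is making the degree computation uniform. On the torus the main issue is edges near the class boundaries, whose degrees are depressed. I would handle this either by weighting, or by averaging over a random shift of the class partition so that each edge lies in the interior of its class with probability $1-O(\eta/\eps)$, and discarding the rest. The other difficulty is concentration of the degrees in $\cH$ in the sparse end $r=\Theta(n^{-1/d})$. There I would use cell-count concentration with $\kappa_2$ large, together with the fact that degree counts are polynomials in cell counts, so that all but a $\delta/2$ fraction of edges have good degree. After deleting the bad edges, the nibble applies with error $\delta/2$.
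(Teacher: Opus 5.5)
Your route is viable but genuinely different from the paper's. You build, for each length class, an integral hypergraph of near-similar copies $x+\lambda R\phi$. You then aim to show it is approximately $D_j$-regular with small codegrees, so that the Frankl--R\"odl/Pippenger nibble applies class by class.

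The paper proves no regularity at all. It trims $\bX$ to $\bX'$ with exactly $N$ points in every cell of side $\approx\eps r$ and passes to the blowup $\cG'$ of the cell graph. It then sets $f(t)=\pr(T=t\mid E=e)$, where a random copy is generated by drawing the scale $R$ from the edge-length distribution, applying a uniform rotation and translation, and marking the image $E$ of a uniform edge of $F$. Because every edge of $\cG'$ is then equally likely to be $E$, $f$ is automatically a fractional matching. Its load at $e$ is $\pr(T\ne\emptyset\mid E=e)$, which equals $1$ for all edges whose cell-centre distance lies in a window, and Kahn's Lemma~\ref{lem:Kahn} needs only $\alpha(f)\le b/N$. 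So the law of $R$ does the length-balancing your classes do, with no degree counting, no class boundaries, no lattice effects and no concentration beyond cell and edge counts.

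Your approach stays within the classical nibble framework, but the regularity step needs more care than you give it:
\begin{itemize}
\item Arithmetic classes $I_j$ have relative width about $1/j$, not $\eps$. Near-constancy of degrees within a class therefore needs $j\ge j_0$ with $j_0$ large in terms of the nibble's accuracy: discard all edges shorter than $j_0\eps r$, or use geometric classes.
\item The slack must be metric. A ``same cell as the image point'' rule makes degrees of rigid $F$ (e.g.\ planar triangles) depend on lattice alignment by constant factors.
\item Uniform concentration over all cells needs $n(\eta r)^d\gg\log n$, not just $nr^d\to\infty$.
\item When $n(\eta r)^d$ is bounded, you need a second-moment bound, not Markov, to get a.a.s.\ few bad edges. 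You also need the version of the nibble that tolerates a small fraction of irregular vertices provided all degrees are $O(D)$, which in turn requires trimming overfull regions. Deleting bad edges instead perturbs the remaining degrees.
\end{itemize}
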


As an immediate corollary, since the clique $K_t$ is in $\cU(d)$ for $t\le d+1$, we get:
\begin{cor}\label{cor:cliques}
Let $d\ge1$, $*\in\{\Qd,\Td\}$, $t\le d+1$ and $\delta >0$. Then, there exist positive constants $\kappa_2=\kappa_2(t,d, \delta)$ and $\kappa_3 = \kappa_3(t, d,\delta)$ (with $\kappa_3=\kappa_3(t, d)$ not depending on $\delta$ when $*=\Td$) so that the following holds. For any $r$ with $\kappa_2n^{-1/d} \le r \le \kappa_3$ we have that a.a.s.
\[
\nu_t( \cG_*(\bX,r) ) \ge (1 - \delta) \frac {m}{\binom{t}{2}},
\]
where $m$ is the number of edges in $\cG_*(\bX,r)$.
\end{cor}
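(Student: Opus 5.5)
The corollary should follow from Theorem~\ref{thm:unitdistance} once we show that $K_t\in\cU(d)$ whenever $t\le d+1$. Then we apply the theorem with $F=K_t$ and $m_F=\binom t2$.

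To show $K_t\in\cU(d)$, we exhibit $K_t$ as a distance graph in $\real^d$, i.e.\ we place $t$ distinct points in $\real^d$ with all pairwise distances equal to the same positive number.

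\begin{itemize}
\item For $t=1$, $K_1$ has no edges, so a single point works vacuously (any one-vertex graph is trivially a distance graph).
\item For $2\le t\le d+1$, take the standard basis vectors $e_1,\dots,e_t$ in $\real^{t}$. All pairwise distances equal $\sqrt2$, so these points are the vertices of a regular $(t-1)$-simplex.
\item These points lie in the affine hyperplane $\{x:\sum_i x_i=1\}$, which has dimension $t-1\le d$. Applying an isometry of that hyperplane onto $\real^{t-1}$ and then embedding $\real^{t-1}\subseteq\real^d$ gives $t$ distinct points in $\real^d$ with all pairwise distances $\sqrt2$.
\end{itemize}

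Hence $K_t$ is itself a distance graph in $\real^d$. It is a (trivial) blowup of itself and a subgraph of itself, so $K_t\in\cU(d)$.

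Now fix $d$, $*$, $t\le d+1$ and $\delta$, and set
\[
\kappa_2(t,d,\delta):=\kappa_2(K_t,d,\delta),\qquad \kappa_3(t,d,\delta):=\kappa_3(K_t,d,\delta),
\]
the constants supplied by Theorem~\ref{thm:unitdistance}. In the toroidal case the theorem gives $\kappa_3$ independent of $\delta$, and this carries over. For $\kappa_2n^{-1/d}\le r\le\kappa_3$, the theorem yields a.a.s.
\[
\nu_t(\cG_*(\bX,r))=\nu_{K_t}(\cG_*(\bX,r))\ge(1-\delta)\frac{m}{\binom t2}.
\]

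The only nontrivial step is the simplex embedding. There is no real obstacle, apart from the degenerate cases $t\le1$: for these $m_F=0$ and the statement is vacuous or not meaningful, and they are presumably excluded implicitly.
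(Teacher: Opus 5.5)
Your proposal is correct and is exactly the paper's argument: the corollary is deduced immediately from Theorem~\ref{thm:unitdistance} with $F=K_t$ and $m_F=\binom t2$, using that $K_t$ is realized as a regular simplex in $\real^d$ for $t\le d+1$, so $K_t\in\cU(d)$. Your explicit embedding via $e_1,\dots,e_t$ in the hyperplane $\sum_i x_i=1$ supplies the detail the paper leaves implicit, and you are right that $t\le 1$ is degenerate ($m_F=0$, so the bound is not meaningful).
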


Note that Theorem~\ref{thm:unitdistance} implies the case $d\ge2$ of Theorem~\ref{thm:dense}, since triangles belong to $\cU(d)$ for $d\ge2$. We believe the conclusion of Theorem~\ref{thm:unitdistance} should hold in many other cases where $F \notin \cU(d)$. One example is triangles in dimension $d=1$, which holds in view of Theorem~\ref{thm:dense} even if $K_3\notin\cU(1)$. We believe that Theorem~\ref{thm:unitdistance} should still hold if one can draw $F$ in $\mathbb{R}^d$ so that, with only one exception, all edges have the same length and the exceptional edge is just a little bit shorter than the rest. However, Theorem~\ref{thm:unitdistance} cannot be extended to arbitrary $F, d$. Indeed, we prove the following negative result, which asserts the nonexistence of almost-perfect packings of large cliques in low-dimensional random geometric graphs. This is essentially because for any way of drawing a large clique in a low dimensional space such that all pairs of vertices have distance at most $r$, there must be many pairs of vertices at distance much smaller than $r$. Thus, when one tries to pack these cliques, one always runs out of short edges long before one can use all the long edges.

\begin{thm}\label{thm:neg}
Let $d \ge 1$, $*\in\{\Qd,\Td\}$ and $\delta>0$. Then there exist positive constants $\kappa_3=\kappa_3(d,\delta)$ (with $\kappa_3=1/2$ for $*=\Td$) and $t_0=t_0(d,\delta)$ such that the following holds. For any $t\ge t_0$ and $r=r(n) \le \kappa_3$, we have that a.a.s.
\[
\nu_t(\cG_*(\bX,r)) \le (1+\delta) \rho_d \cdot \frac{m(\cG_*(\bX,r))}{\binom t2},
\]
where $\rho_d$ is a constant defined in Section~\ref{sec:neg} that satisfies $\rho_d \le 1/2$ and $\rho_d \le \sqrt{d} (3/4)^{d/2}$ for all $d\ge1$.
\end{thm}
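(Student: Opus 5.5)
The plan is a double-counting argument on \emph{short} edges. Each copy of $K_t$ drawn in a set of diameter at most $r$ must contain many short edges, while short edges form only a small fraction of all edges of $\cG_*(\bX,r)$.

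\textbf{Step 1 (deterministic geometry).} Fix $\lambda\in(0,1)$. Let $N(\lambda,d)$ be the maximum number of points in $\real^d$ that have pairwise distances $>\lambda$ and lie in a set of diameter at most $1$; this is finite by a volume argument. Take any $t$ points pairwise within distance $r$, and form the auxiliary graph joining pairs at distance $>\lambda r$. After rescaling by $r$, this graph is $K_{N(\lambda,d)+1}$-free. By Tur\'an's theorem it has at most $(1-1/N(\lambda,d))t^2/2$ edges. Hence every $t$-clique of $\cG_*(\bX,r)$ has at least
\[
\bigl(1/N(\lambda,d)-O(1/t)\bigr)\tbinom t2
\]
edges of length at most $\lambda r$. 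For the torus with $r\le 1/2$ the same holds, because small toroidal neighbourhoods are isometric to Euclidean ones; I expect this to be where $\kappa_3=1/2$ comes from. I would then define
\[
\rho_d=\inf_{\lambda\in(0,1)}N(\lambda,d)\,\lambda^d .
\]
The bounds $\rho_d\le 1/2$ and $\rho_d\le\sqrt d(3/4)^{d/2}$ should follow from explicit choices of $\lambda$ together with known bounds on $N(\lambda,d)$. For instance, for $\lambda$ somewhat above $1/\sqrt2$ one has $N$ of order $d$, and a Jung-type or ball-packing bound gives the exponential term. These computations are routine, but they are where the exact constant gets fixed, so I would adjust the definition of $\rho_d$ as needed to make them work.

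\textbf{Step 2 (probabilistic counting).} Fix $\lambda$ with $N\lambda^d\le(1+\delta/3)\rho_d$, and then take $t_0$ large enough that the $O(1/t)$ loss costs only another factor $1+\delta/3$. Let $m_\lambda$ be the number of edges of length at most $\lambda r$.
\begin{itemize}
\item On the torus, $\ex m_\lambda=\lambda^d\,\ex m$ exactly.
\item On the cube, the two expectations agree up to a $1+O(dr)$ factor coming from the boundary. Choosing $\kappa_3$ small in terms of $\delta$ absorbs this factor.
\end{itemize}
In the regime $n^2r^d\to\infty$, I would show that $m$ and $m_\lambda$ are both concentrated around their means. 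These are $U$-statistics of order $2$, so a variance computation gives $\mathrm{Var}=O(n^2r^d+n^3r^{2d})$, which is $o((n^2r^d)^2)$; Chebyshev then gives $m_\lambda\le(1+\delta/6)\lambda^d m$ a.a.s. Since the packing is edge-disjoint, counting short edges gives
\[
\nu_t\cdot\bigl(1/N-O(1/t)\bigr)\tbinom t2\le m_\lambda .
\]
This yields $\nu_t\le(1+\delta)\rho_d\,m/\binom t2$.

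\textbf{Step 3 (the sparse range).} If $n^2r^d=O(1)$, concentration fails, but then the expected number of copies of $K_t$ is $O(n^t r^{d(t-1)})=o(1)$ for $t\ge3$. So a.a.s.\ there are no such cliques, $\nu_t=0$, and the inequality holds trivially. Intermediate sequences are handled by passing to subsequences.

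I expect the main obstacle to be Step 1's optimization. Making $\rho_d$ genuinely satisfy the stated numerical bounds requires good control of $N(\lambda,d)$, and the bound $\sqrt d(3/4)^{d/2}$ suggests a specific choice such as $\lambda^2\approx3/4$ combined with a Jung-theorem/volume estimate. The probabilistic part is standard.
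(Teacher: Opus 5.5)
\textbf{The gap is in Step~1: a single length threshold cannot produce the stated constant in low dimensions.}

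\emph{Dimension one.} In $d=1$ you have $N(\lambda,1)=\lceil 1/\lambda\rceil$, so $N(\lambda,1)\lambda\ge 1$ for every $\lambda\in(0,1)$. Your bound therefore degenerates to the trivial $\nu_t\le m/\binom t2$, while the theorem requires $\rho_1\le 1/2$.

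This is not slack in Tur\'an's theorem, so no choice of $\lambda$ can repair it, and neither can an ``adjusted'' definition of $\rho_d$ inside this scheme. Take $k=\lceil 1/\lambda\rceil$ and split a clique into $k$ equal clusters placed at $0,\frac{r}{k-1},\dots,r$. Only a $1/k+o(1)$ fraction of its pairs are at distance $\le\lambda r$. Meanwhile a $\lambda\ge 1/k$ fraction of the edges of $\cG_*(\bX,r)$ have length $\le\lambda r$.

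\emph{Higher dimensions.} The same obstruction persists in $d=2$: explicit configurations in a disc of diameter $1$ give $N(\lambda,2)\lambda^2>1/2$ for every $\lambda$. In general, $N(\lambda,d)\lambda^d$ tends to the sphere-packing density of $\real^d$ as $\lambda\to0$, and this density exceeds $1/2$ in small dimensions. In high dimensions the scheme is not hopeless, since that density is exponentially small, but the theorem claims both bounds for every $d\ge1$.

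\emph{The explicit estimates you sketch are also off.} For any fixed $\lambda<1$, rescaled nearly orthogonal unit vectors give exponentially many points (in $d$) with all distances in $(\lambda,1]$. So $N$ is not of order $d$ just above $1/\sqrt2$. At $\lambda=\sqrt3/2$, the regular simplex alone gives $N\lambda^d\ge(d+1)(3/4)^{d/2}>\sqrt d\,(3/4)^{d/2}$.

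\textbf{The missing idea is to make ``short'' an equivalence relation.} Closeness within $\lambda r$ is not transitive, which is why you only get Tur\'an's $1/N$ with $N$ a packing number.

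\emph{How the paper counts.} The paper calls an edge short if both endpoints lie in the same cell of a fixed grid of side $s\le\eps r$. A second-moment count shows that only about $\binom n2 s^d\approx(\eps^d/\theta)\,m$ edges are short. On the other side, let $v,w$ be the two farthest vertices of a $K_t$ with all distances $\le r$. All its vertices lie in the lens $S$ obtained by intersecting two balls of radius $r$ whose centres are (w.l.o.g.) at distance $r$. This lens meets at most $k\approx\vol(S)/s^d=\rho_d\theta/\eps^d$ cells. Convexity of $\binom z2$ then gives at least $k\binom{t/k}{2}\approx\binom t2/k$ same-cell pairs. Because cells partition space, there is no packing-density loss, and dividing the two counts gives exactly $\rho_d=\vol(S)/(\theta r^d)$.

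\emph{Where the numerical bounds come from.} They come from the lens itself:
\begin{itemize}
\item $\rho_d\le 1/2$ because $S$ lies in half of one ball;
\item $\rho_d\le\sqrt d\,(3/4)^{d/2}$ by integrating $(d-1)$-dimensional slices over $x_1\in[1/2,1]$, plus a bound on $\theta_{d-1}/\theta_d$. The $3/4$ is the squared radius of the lens's central cross-section, not a distance threshold.
\end{itemize}
In $d=1$ you can see the factor $1/2$ directly: an $s/(2r)$ fraction of edges are same-cell, but every clique has at least about an $s/r$ fraction.

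Your Steps~2 and~3 (concentration, cube versus torus, the sparse range) are fine and carry over once ``short'' is redefined this way.
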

In other words, for sufficiently large $t$ given the dimension $d$, the largest $K_t$-packing uses at most roughly half of the edges of the random geometric graph (or exponentially many fewer if $d$ is large). This is in striking contrast with Corollary~\ref{cor:cliques}, which guarantees almost-perfect $K_t$-packings for the case when $t\le d+1$.

\smallskip

\paragraph{{\bf Paper organization:}}
In Section~\ref{sec:triangles} we focus on triangles and prove Theorems~\ref{thm:sparse} and~\ref{thm:dense} (the latter only for $d\ge2$). In Section~\ref{sec:general}, we extend these ideas to packings of general graphs in $\cU(d)$ and prove Theorem~\ref{thm:unitdistance}. In Section~\ref{sec:d1}, we complete the proof of Theorem~\ref{thm:dense} by considering the case $d=1$. Note that our triangles are no longer near-equilateral there. In Section~\ref{sec:neg} we prove Theorem~\ref{thm:neg}, which provides a negative counterpart to Corollary~\ref{cor:cliques}. Finally, we include some concluding remarks and open problems in Section~\ref{sec:final}.

\section{Tuza's conjecture and Theorems~\ref{thm:sparse} and~\ref{thm:dense} (for $d\ge2$)} \label{sec:triangles}
\begin{proof}[Proof of Theorem~\ref{thm:sparse}]
Let $Y (=t)$ be the number of triangles in $\cG_{*}(\bX,r).$ We apply the second moment method to estimate $Y$. Let $p$ be the probability that three particular distinct random points $X_i,X_j,X_k\in\bX$ form a triangle, so $\ex[Y] = \binom n3 p$. Consider the toroidal case first, that is $*=\Td$. If $X_j,X_k$ are both within distance $r/2$ of $X_i$ then we always have the triangle $X_iX_jX_k$. Thus $p \ge [\theta (r/2)^d]^2$, where $\theta=\theta_d$ is the volume of a unit ball in $\real^d$.
For the cubic case, $*=\Qd$, we obtain $p \ge (1-r)^d[\theta (r/2)^d]^2 \ge (1+o(1))[\theta (r/2)^d]^2$ by further imposing that $X_i$ is at least distance $r/2$ from all the facets of the cube.
Now in order to have any chance of forming a triangle, $X_j$ and $X_k$ must be within distance $r$ of $X_i$, and so $p \le  [\theta r^d]^2$. Thus we have $\ex[Y] = \Theta(n^3 r^{2d})$ for both the toroidal and the cubic models. Now we bound $\ex[Y^2].$ For each triple $ijk$ of distinct indices in $[n]$, let $\cE(ijk)$ be the event that $X_iX_jX_k$ form a triangle. Then (explanation follows)
\begin{align*}
    \ex[Y^2] & = \ex \sbrac{\rbrac{\sum_{ijk} \mathbbm{1}_{\cE(ijk)}}^2}\\
    & = \sum_{ijk, i'j'k'} \pr[\cE(ijk) \wedge \cE(i'j'k')]\\
    & = O(n^3 r^{2d} + n^4 r^{3d} + n^5r^{4d}) + \binom n3 p \cdot \binom {n-3}{3} p\\
    & = (1+o(1))\ex[Y]^2.
\end{align*}
The sum on the second line is over all (ordered) pairs of (unordered) triples of distinct indices in $[n]$. On the third line, the first big-O term accounts for the terms where $ijk=i'j'k'$. The second big-O term takes care of the terms where our triples share two indices, and the last big-O term is for when the triples share one index. The main term on the third line is for disjoint triples. Now by the second moment method, a.a.s.\ we have $Y = (1+o(1))\ex[Y]$.

Let $Z$ be the number of pairs of triangles sharing exactly one edge, i.e.~ the number of copies of the graph $K_4^-$ formed by removing one edge from $K_4$. Similarly to our treatment of $Y$, one can use the second moment method to show that a.a.s.\ $Z = (1+o(1))\ex[Z] = \Theta(n^4 r^{3d}) = \Theta(nr^d Y).$ Since $nr^d \le \kappa_1^d$, we can choose $\kappa_1$ small enough so that $Z < \delta Y$. Thus there are at least $(1-\delta)Y = (1-\delta)t$ triangles which do not share any edge with another triangle. This completes the proof of Theorem~\ref{thm:sparse}.
\end{proof}




Next we show Theorem~\ref{thm:dense} for $d\ge2$. This is a particular case of the more general Theorem~\ref{thm:unitdistance}, so we treat this as a warm-up for the next proof. Here we will introduce some facts and tools that we will also use in the proof of Theorem~\ref{thm:unitdistance}.

\begin{proof}[Proof of Theorem~\ref{thm:dense} ($d\ge 2$)]

Most of the proof will apply to the $d=1$ case as well, so in general we assume $d\ge 1$. We will indicate the parts that require $d\ge2$ and then in Section~\ref{sec:d1} we will explain how to modify these parts to address the $d=1$ case.

Our goal is to find a collection of edge-disjoint triangles that uses almost all edges of the random geometric graph $\cG_*(\bX,r)$. Moreover, these triangles will all be ``nearly equilateral'' (for $d\ge2$ only).

It is well known that the number of edges $m$ of $\cG_{\Qd}(\bX,r)$ or $\cG_{\Td}(\bX,r)$ is typically around $\theta r^d n^2/2$. Below we include a proof of this fact in a precise form that is convenient for our argument. Similar versions of this statement may be found in the literature. (For instance, Chapter~3 of~\cite{Penrose} proves several much more general central-limit-type results for the fixed-size subgraph counts in $\cG_{\Td}(\bX,r)$, from which one can easily derive concentration of the number of edges.)
\begin{lem}\label{lem:edges}
Let $d\ge1$ and $1/2 \ge r=\omega(n^{-2/d})$. Then, a.a.s., $\cG_{\Td}(\bX,r)$ has $(\theta/2+o(1))n^2 r^d$ edges, and all but at most a $(2d+o(1))r$ fraction of those are also edges of $\cG_{\Qd}(\bX,r)$.
\end{lem}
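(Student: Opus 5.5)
The plan is to count edges as a U-statistic and apply the second moment method, exactly as in the proof of Theorem~\ref{thm:sparse}, but with pairs of points instead of triples.

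\emph{Edges on the torus.} Let $M=\sum_{i<j}\mathbbm{1}[\dist_\Td(X_i,X_j)\le r]$. Since $r\le 1/2$, the toroidal ball of radius $r$ around any point is isometric to a Euclidean ball and has volume exactly $\theta r^d$, independently of its center. Hence $\ex[M]=\binom n2\theta r^d=(\theta/2+o(1))n^2r^d$. For the variance, split $\ex[M^2]$ by how many indices the two pairs share:
\begin{itemize}
\item identical pairs contribute $\ex[M]$;
\item disjoint pairs are independent, so they contribute at most $\ex[M]^2$;
\item pairs sharing exactly one index, say $\{i,j\},\{i,k\}$, contribute $\pr[X_j,X_k\in B(X_i,r)]=(\theta r^d)^2$ each, because the ball volume does not depend on $X_i$.
\end{itemize}
So the pairs sharing one index contribute $O(n^3r^{2d})$, but this is exactly matched by the corresponding term of $\ex[M]^2$. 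Thus these pairs are in fact uncorrelated, and
\[
\mathrm{Var}(M)\le \ex[M]=\Theta(n^2r^d).
\]
Chebyshev's inequality then gives $M=(1+o(1))\ex[M]$ whenever $\ex[M]\to\infty$, which holds since $n^2r^d\to\infty$ by the hypothesis $r=\omega(n^{-2/d})$. This is the only place the lower bound on $r$ is needed. The independence of ball volume from the center is what makes the variance clean, and it is where the torus and $r\le1/2$ are used.

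\emph{Edges lost in the cube.} Let $W$ count the pairs that are edges of $\cG_\Td$ but not of $\cG_\Qd$. For such a pair, the minimizing $Z$ in the toroidal distance is nonzero, so in some coordinate $\ell$ the two points lie within distance $r$ of opposite faces $x_\ell=0$ and $x_\ell=1$. In particular $X_i$ lies in the boundary layer
\[
S=\{x:\min_\ell\min(x_\ell,1-x_\ell)\le r\},\qquad \vol(S)\le 2dr.
\]
Conditioning on $X_i$, the probability that $X_j$ is a toroidal neighbour is $\theta r^d$, so
\[
\ex[W]\le \binom n2\cdot 2dr\cdot\theta r^d=(2d+o(1))r\,\ex[M].
\]

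The main obstacle is that a bound on the expectation of $W$ must be converted into an a.a.s.\ bound of the form $W\le(2d+o(1))rM$. I would handle this with Markov's inequality by treating two regimes of $r$.
\begin{itemize}
\item If $r\to0$, it is enough to show $W=o(M)$ a.a.s. Markov's inequality applied to $\ex[W]=O(r)\ex[M]$, with a threshold such as $\sqrt r\,\ex[M]$, gives this.
\item If $r$ is bounded below, the relative error must be made $o(r)$. Here I would instead bound $W$ by the number of pairs with $X_i\in S$ and apply the same second moment computation to that count. This yields $W\le(1+o(1))\ex[\cdot]$ a.a.s., where the expectation is at most $(2d+o(1))r\,\ex[M]$ up to the indicator restriction.
\end{itemize}
A subsequence argument covers general $r$ that oscillates between the two regimes. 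Combining with the concentration of $M$ gives that all but a $(2d+o(1))r$ fraction of the toroidal edges are cube edges.
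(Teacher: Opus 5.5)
Your treatment of the toroidal edge count $M$ is correct. It is even a little cleaner than the paper's, since pairwise independence of the edge indicators on the torus gives $\mathrm{Var}(M)\le\ex M$ at once. Your bound $\ex W\le 2dr\,\ex M$ is also right.

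\textbf{The gap is in your first bullet.} The lemma asserts $W\le(2d+o(1))\,r\,M$, i.e.\ $W/M\le 2dr+o(r)$. When $r\to0$ this prescribes the \emph{rate} at which $W/M$ vanishes, so $W=o(M)$ is not enough. Markov with threshold $\sqrt r\,\ex M$ only gives $W\le(1+o(1))\sqrt r\,M$. More generally, Markov's inequality by itself only ever yields $W\le\omega(1)\,\ex W$ a.a.s., so some concentration input is unavoidable. For example, at $r=n^{-1/(2d)}$ the stated bound holds, but your argument does not reach it.

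The paper does not split by the size of $r$. It runs the second moment method on $Z$ (your count of pairs with $X_i\in S$) for every $r$, obtaining $Z\sim\ex Z\le 2dr\,\ex M$. That computation is valid exactly when $\ex Z\asymp n^2r^{d+1}\to\infty$. The diagonal term needs this, and the one-shared-index covariance term needs $nr\to\infty$, which follows from it. So the repair is:
\begin{enumerate}[(i)]
\item use your second bullet whenever $n^2r^{d+1}\to\infty$, not only for $r$ bounded below;
\item if $n^2r^{d+1}\to0$, note that $\ex W\to0$, so $W=0$ a.a.s.
\end{enumerate}

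In the remaining window $n^2r^{d+1}=\Theta(1)$, no argument can give the multiplicative bound, because the lemma as written can fail there. For $d=1$ and $r=1/n$ we have $M\sim n$, so the lemma would force $W\le 2$ a.a.s. Yet with probability bounded away from zero there are two points in $[0,\frac1{2n}]$ and two in $[1-\frac1{2n},1]$, which gives $W\ge4$. The paper's second-moment step for $Z$ tacitly assumes $\ex Z\to\infty$ and has the same hole.

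What your proof does establish after the subsequence argument is $W\le(2dr+o(1))M$ a.a.s.\ for all $\omega(n^{-2/d})\le r\le 1/2$. This is the correct uniform statement, and it is all that Remark~\ref{rem:torus} and its analogues in the proofs of Theorems~\ref{thm:unitdistance} and~\ref{thm:neg} actually use. Still, it is weaker than the lemma as stated. In the regime $r\to0$, $n^2r^{d+1}\to\infty$ you need the paper's second-moment argument on $Z$ rather than Markov.
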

\begin{proof}
We will use two simple second moment arguments akin to those in the proof of Theorem~\ref{thm:sparse}.
Let $Y$ be the number of edges in $\cG_{\Td}(\bX,r)$. The probability that two different vertices $X_i,X_j$ in $\bX$ are adjacent in $\cG_{\Td}(\bX,r)$ is exactly $\theta r^d$ (since $r\le 1/2$ and thus the ball of radius $r$ around $X_i$ does not wrap around the torus). Hence, $\ex Y = \binom{n}{2}\theta r^d \sim (\theta/2)n^2 r^d\to\infty$, since $r=\omega(n^{-2/d})$. Moreover (explanation follows),
\begin{equation}
\label{eq:Y2moment}
\ex Y^2 = O(n^2r^d+n^3r^{2d}) + \binom{n}{2}\binom{n-2}{2}\theta^2r^{2d} \sim (\ex Y)^2.
\end{equation}
Here, the two big-O terms account for the number of pairs of edges with two or one common endpoints. The last term in $\ex Y^2$ is the number of (ordered) pairs of edges with distinct endpoints.
By the second moment method, a.a.s.\ $Y=(1+o(1))\ex Y = (\theta/2+o(1))n^2 r^d$.

An edge in $E(\cG_{\Td}(\bX,r))\setminus E(\cG_{\Qd}(\bX,r)$) must have both endpoints within Euclidean distance $r$ from the boundary of $[0,1]^d$. Let $Z$ count the number of edges $X_iX_j$ ($i<j$) of $\cG_{\Td}(\bX,r)$ where $X_i$ is within distance $r$ of some facet of $[0,1]^d$. Clearly, $Z$ is an upper-bound on $|E(\cG_{\Td}(\bX,r))\setminus E(\cG_{\Qd}(\bX,r))|$. In a similar way to $Y$, $\ex Z = \binom{n}{2}(1-(1-2dr)^d)\theta r^d \le 2dr \ex Y$.
Moreover,
\[
\ex Z^2 = O(dr n^2r^d+dr n^3r^{2d}) + \binom{n}{2}\binom{n-2}{2} (1-(1-2dr)^d)^2 \theta^2r^{2d} \sim (\ex Z)^2,
\]
and a second moment argument again gives that a.a.s.~$Z\sim \ex Z \le 2dr \ex Y \sim 2dr (\theta/2)n^2r^d$. This finishes the proof of the lemma.
\end{proof}
\begin{remark}\label{rem:torus}
In view of Lemma~\ref{lem:edges}, the conclusion of Theorem~\ref{thm:dense} for the cubic model ($*=\Qd$) will follow from that of the toroidal model ($*=\Td$). Indeed, suppose that one can find a collection of at least $(1-\delta/2)m(\cG_{\Td}(\bX,r))/3$ edge-disjoint triangles in $\cG_{\Td}(\bX,r)$. After removing all the triangles from the collection that are not contained in $\cG_{\Qd}(\bX,r)$, we still have at least
\begin{equation*}
(1-\delta/2 - (6d+o(1))r )m(\cG_{\Td}(\bX,r))/3
\end{equation*}
triangles left in $\cG_{\Qd}(\bX,r)$. Picking constant $\kappa_3=\kappa_3(d,\delta)$ to be sufficiently small, we still have at least $(1-\delta)m(\cG_{\Td}(\bX,r))/3 \ge (1-\delta)m(\cG_{\Qd}(\bX,r))/3$ triangles, as desired.
Therefore, it will suffice to prove Theorem~\ref{thm:dense} for $*=\Td$.
\end{remark}

For the remaining of this section, we assume $*=\Td$ and interpret all distances and geometric descriptions to be in the torus. For simplicity, we often equate $[0,1]^d$ to $\Td$ by implicitly identifying points at toroidal distance $0$. Fix $\kappa_3=1/2$. In particular, the ball of radius $r$ does not wrap around the torus and has volume exactly $\theta r^d$, where recall $\theta$ is the volume of a unit ball in $\real^d$.


A {\em hypergraph} $\cH$ on a vertex set $V=V(\cH)$ is a collection of subsets of $V$. The elements of $\cH$ are called {\em edges}.  We say $\cH$ is $\l$-bounded if every edge has size at most $\l$. A {\it matching} in $\cH$ is a set of pairwise disjoint edges. The {\it matching number} of $\cH$, denoted by $\nu(\cH)$, is the maximum possible size of a matching in $\cH$. We say that a function $f:\cH\rightarrow [0,1]$ is a {\it fractional matching} for $\cH$ if for every $v\in V(\cH)$, 
$$\sum_{e\in \cH:~v\in e}f(e)\le 1.$$ 
For a fractional matching $f$ for $\cH$, let $f(\cH):=\sum_{e\in \cH}f(e)$ and
$$\alpha(f):=\max_{x\neq y \in V(\cH)} \sum_{e\in \cH:~x,y\in e} f(e).$$

Kahn \cite{K96} showed that under certain conditions, a fractional matching can be used to obtain a matching of nearly the same size:
\begin{lem}[Theorem 1.2 in \cite{K96}]\label{lem:Kahn}
    For every integer $\l\ge 1$ and every real $\eps>0$, there exists a real $\sigma>0$ such that whenever $\cH$ is an $\l$-bounded hypergraph and $f$ is a fractional matching of $\cH$ with $\alpha(f)<\sigma$, then $\nu(\cH)>(1-\eps)f(\cH).$
\end{lem}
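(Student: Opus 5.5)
Since this is Kahn's theorem from \cite{K96}, which the paper only cites, I would sketch the standard semi-random (Rödl nibble) approach rather than anything specific to geometric graphs. The fractional matching $f$ serves as the vector of selection probabilities.

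\textbf{Setup.} First dispose of edges of size $1$. Such an edge conflicts only with other edges through its single vertex. So I would take one singleton per vertex greedily at the end, or equivalently attach a private dummy vertex to each singleton. This reduces to the case where every edge has size between $2$ and $\l$. In that case each edge $e$ satisfies $f(e)\le\alpha(f)<\sigma$, since $e$ contains a pair $x,y$. Write $w(v)=\sum_{e\ni v}f(e)\le 1$ for the load of $v$.

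\textbf{One nibble round.} Fix a small $\eta=\eta(\eps,\l)$. Select each edge $e$ independently with probability $\eta f(e)$. Keep the selected edges that meet no other selected edge; these form a partial matching $M_1$. Then delete every vertex covered by some selected edge, together with all edges touching such vertices. The expected number of kept edges is at least $\eta f(\cH)(1-O(\l\eta))$, because a given selected edge is blocked with probability at most $\sum_{v\in e}\eta w(v)\le\l\eta$.

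An edge $e$ that survives into the residual hypergraph does so with probability about $\prod_{v\in e}(1-\eta w(v))$. I would reweight each surviving edge as
\[
f'(e)=f(e)\Big/\prod_{v\in e}(1-\eta w(v)).
\]
With this choice, the new load of each surviving vertex $v$ is in expectation about $w(v)$. The new codegree bound is still $\alpha(f')\le(1+O(\eta))\alpha(f)$. Finally, $f'$ of the residual hypergraph is about $f(\cH)-\eta f(\cH)$, up to a factor $1+O(\l\eta)$. So each round converts an $\eta$-fraction of the remaining mass into matching edges, losing only an $O(\l\eta)$ relative fraction of that mass. Note that the cap $w\le 1$ is harmless, since slightly overloaded vertices can be fixed by scaling $f'$ down by $1+o(1)$.

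\textbf{Iteration.} Repeat for $K=\eta^{-1}\log(1/\eps)$ rounds. After these rounds the remaining mass is at most $\eps f(\cH)/2$, and the collected matching has size at least
\[
(1-O(\l\eta))(1-\eps/2)f(\cH)>(1-\eps)f(\cH).
\]
Choosing $\eta$ small enough in terms of $\eps$ and $\l$, and then $\sigma$ tiny in terms of $\eta$, $K$ and $\l$, makes the accumulated errors over the $K$ rounds negligible.

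\textbf{Main obstacle.} The difficulty is concentration. We need every vertex load $w'(v)$, and the total mass, to stay close to their expectations simultaneously, so that the hypotheses can be re-verified at every round. Unlike in the Pippenger–Spencer setting, there is no regularity and there may be arbitrarily many vertices; the only smallness assumption available is $\alpha(f)<\sigma$.

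For the global quantities (the size of $M_1$ and the total surviving mass) I would use a second-moment bound. Dependencies arise only through shared vertices, and the total covariance is controlled by $\l^2\alpha(f)f(\cH)$, which is small compared with $f(\cH)^2$ only if $f(\cH)$ is large relative to $\alpha$. That holds when $f(\cH)\ge 1$; otherwise the statement is trivial, since a single edge already gives $\nu(\cH)\ge1$.

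For the local loads, pointwise concentration can fail. Here I would follow Kahn's idea of not insisting on it. Instead, define $f'$ by truncation: set $f'(e)=0$ on edges containing a vertex whose load exceeds $1+\eta^2$. Then bound in expectation the mass lost this way, using Chebyshev for each vertex load. Its variance is $O(\l\,\alpha(f))$, because conflicts between edges at $v$ occur only through pairs of vertices. The lost mass is then $O(\sigma\eta^{-4})f(\cH)$, which is negligible once $\sigma$ is chosen small. Since the whole accounting is then in expectation, a final application of Markov/Chebyshev to the total mass gives the existence of a good outcome in every round.
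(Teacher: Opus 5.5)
The paper gives no proof of this lemma; it is quoted from Kahn, so your sketch has to stand on its own. Its architecture is the right one: selection probabilities $\eta f(e)$, reweighting the survivors, truncating at overloaded vertices. However, the normalization at its heart is wrong as written.

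With $f'(e)=f(e)/\prod_{v\in e}(1-\eta w(v))$, conditioning on $v$ surviving leaves the factor $1-\eta w(v)$ uncompensated. So $\ex[w'(v)\mid v\text{ survives}]\approx w(v)/(1-\eta w(v))$, not $w(v)$. Likewise $\ex f'(\cH')\approx f(\cH)$, not $(1-\eta)f(\cH)$.

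The excess is of order $\eta$ per round, the same order as the progress, so it is not a harmless $1+o(1)$ effect. With your threshold $1+\eta^2$, a vertex of load near $1$ has new load concentrated near $1+\eta$. Chebyshev then gives nothing, and truncation deletes almost all the mass at such vertices. Left untreated, $1/w$ drops by $\eta$ each round, so loads blow up after about $1/\eta$ rounds, well before $K$.

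The two first-order errors cancel exactly if you build the factor $1-\eta$ into the weights: $f'(e)=(1-\eta)f(e)/\pr(e\text{ survives})$. Survival of $e$ implies survival of every $v\in e$, so this gives $\ex f'(\cH')=(1-\eta)f(\cH)$. It also gives $\ex[w'(v)\mid v\text{ survives}]=(1-\eta)w(v)/\pr(v\text{ survives})\le(1-\eta)w(v)/(1-\eta w(v))\le w(v)$, for arbitrary non-uniform loads. Moreover $\pr(e\text{ survives})\ge\prod_{v\in e}(1-\eta w(v))$ gives $f'(e)\le(1-\eta)^{1-\l}f(e)$, so codegrees grow over $K$ rounds only by a factor depending on $\eps,\l$. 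Only with this normalization does your truncation-plus-Chebyshev step work.

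Your global concentration claim is also false, and with it the dismissal of small $f(\cH)$. Codegrees do not control these variances. The number of selected edges is already a sum of independent Bernoulli$(\eta f(e))$ variables, with variance about $\eta f(\cH)$. The residual mass picks up covariance of order $\eta w(v)^3$ from pairs of edges through each vertex $v$. So Chebyshev needs $\eta f(\cH)$ large in terms of $\eps$ and $\l$. For bounded $f(\cH)$ the statement is far from trivial: for $f(\cH)=5$ and $\eps<1/5$ it asserts $\nu(\cH)\ge 5$.

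There are two easy repairs.
\begin{enumerate}
\item Apply the argument to $q$ vertex-disjoint copies of $(\cH,f)$. This leaves $\alpha(f)$ unchanged and multiplies both $\nu$ and $f$ by $q$.
\item More cleanly, drop per-round concentration altogether. A per-round good outcome needs simultaneous control of $|M_i|$ and the residual mass, which expectations alone do not give. Instead, run all $K$ rounds as one experiment. Use $\ex[|M_i|\mid\text{past}]\ge\eta(1-\l\eta)f_{i-1}(\cH_{i-1})$ and $\ex[f_i(\cH_i)\mid\text{past}]\ge(1-\eta)(1-\gamma)f_{i-1}(\cH_{i-1})$, with $\gamma$ the truncation and rescaling loss. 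This gives $\ex|M|>(1-\eps)f(\cH)$, so some outcome is good. Your vertexwise Chebyshev bound is then the only probabilistic estimate needed.
\end{enumerate}

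Finally, the private-dummy-vertex reduction for singletons violates the hypothesis. The pair $\{v,d_v\}$ has codegree $f(\{v\})$, possibly close to $1$. Instead, replace $\{v\}$ by $k$ pairs $\{v,u_1\},\dots,\{v,u_k\}$ on new vertices, each of weight $f(\{v\})/k$, with $k$ large.
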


We fix some small $\eps>0$. (Our choice of $\eps$ will depend on $\delta$ and $d$. In turn, $\kappa_2$ will be sufficiently large given $\eps$, $\delta$ and $d$, but recall $\kappa_3=1/2$ for the toroidal case that we are considering.)
We partition the torus $[0,1]^d$ into $n_0$ cubic cells of side length $s=\lceil1/\eps r\rceil^{-1}$, so in particular $\eps r/(1+\eps r) \le s\le \eps r$ and $n_0=s^{-d}$ is an integer. From our assumptions on $r$, we have
\begin{equation}\label{eq:n0range}
1/(\eps \kappa_3)^d \le n_0 \le  \left(\frac{1+\eps \kappa_3}{\eps \kappa_2}\right)^d n \le \left(\frac{2}{\eps \kappa_2}\right)^d n.
\end{equation}
Henceforth, we will assume that each point in $\bX$ lies in the interior of a unique cell, which happens with probability $1$.  We define an auxiliary graph $\cC$ which we will call the {\em graph of cells}. Its vertex set $V(\cC)$ will be the set of $n_0$ cells, and two distinct vertices (cells) in $\cC$ are adjacent if the centers of the two cells have  distance at most $(1-d\eps)r$ (see Figure~\ref{fig:cells}).
By construction, any two points $X,Y\in[0,1]^d$ that lie in adjacent cells (or in the same cell) must be within (toroidal) distance $r$.
Note that $\cC$ is a regular graph with the degree of each cell at most
\begin{equation}\label{eq:degC}
\theta r^d / s^d = \theta r^d n_0.
\end{equation}
A triangle in $\cC$ is a set $\{C_1,C_2,C_3\}$ of distinct pairwise-adjacent cells.

\begin{figure}
\centering
\begin{tikzpicture}[scale=1] 
\tikzstyle{vert}=[shape=circle,draw=black,fill=white, inner sep=.5mm]
\usetikzlibrary{decorations.pathreplacing}
\begin{scope}
\draw[dashed,fill=gray!15] (5.5,6.5) circle (1.8);
	\foreach \x in {1,2,3,4,5,6,7,8,9}
		{
    		\draw (\x,1) -- (\x,{9});
		\draw (1,\x) -- (9,\x);
		}
\end{scope}
\draw[-{latex}] (1,1) -- (4.5,1);
\draw[-{latex}] (1,1) -- (1,4.5);
\draw[-{latex}] (1,9) -- (4.5,9);
\draw[-{latex}] (9,1) -- (9,4.5);
\foreach \i/\x in {1/4.5,2/5.5,3/6.5}
	{
	\foreach \j/\y in {1/5.5,2/6.5,3/7.5} {
		\node[vert] (\i\j) at (\x,\y) {};
		}
	}
\foreach \x/\y in {22/11,22/12,22/13,22/32,22/23,22/33,22/13,22/31,22/21} {
	\draw[thick] (\x) -- (\y);
	}
\foreach \x/\y in {2/3,2/1} {
	\draw[thick] (\y\y) -- (\x\y);
	\draw[thick]  (\y\y) -- (\y\x);
	\draw[thick]  (\x\y) -- (\y\x);
	}	
\foreach \x/\y in {13/23,13/12,12/22,22/23,13/22,12/23} {
	\draw[thick] (\x) -- (\y);
	}	
\foreach \x/\y in {22/32,22/31,22/21,21/31,31/32,21/32} {
	\draw[thick] (\x) -- (\y);
	}					
\draw [decorate,decoration={brace,amplitude=5pt,mirror,raise=4ex}]
  (8.6,5) -- (8.6,6) node[midway,xshift=6.5em]{$s=(1+o(1))\varepsilon r$};
\end{tikzpicture}
\caption{The graph of cells $\cC$ for $d=2$.}
\label{fig:cells}
\end{figure}

Observe that each cell of $\cC$ contains $n/n_0$ points of $\bX$ in expectation. Next, we show that we can slightly perturb $\bX$ to guarantee that every cell has the same amount of points.
\begin{lem}\label{lem:trimming}Let $0<\eps<1$ and $\kappa_2>0$ be sufficiently large given $\eps$.
    A.a.s.\ it is possible to alter $\bX$ by adding at most $4\eps n$ points and removing at most $4\eps n$ points to form a new set of points $\bX'$ which has exactly $N:=\lceil n/n_0\rceil$ points in each cell.
\end{lem}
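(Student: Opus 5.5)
The number of points in each cell is $\Bin(n,1/n_0)$ with mean $\mu:=n/n_0$. By \eqref{eq:n0range}, $\mu \ge (\eps\kappa_2/2)^d$, so taking $\kappa_2$ large makes $\mu$ as large a constant as we like. The plan is to show that a.a.s.\ the total discrepancy $\sum_{C}|\,|\bX\cap C|-N|$ is small compared with $n$. Then we fix it cell by cell: in a cell with more than $N$ points we delete the excess points, and in a cell with fewer than $N$ points we add new points in the interior of that cell. The number of points removed is then $\sum_C (|\bX\cap C|-N)^+$ and the number added is $\sum_C (N-|\bX\cap C|)^+$, so it suffices to bound each of these sums by $4\eps n$.

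First we handle the rounding. Since $N=\lceil \mu\rceil\le \mu+1$, we can bound $(N-|\bX\cap C|)^+ \le (\mu-|\bX\cap C|)^+ + 1$ and $(|\bX\cap C|-N)^+\le (|\bX\cap C|-\mu)^+$. The extra $+1$ per cell adds up to $n_0 = n/\mu \le \eps n$ once $\mu\ge 1/\eps$, which holds when $\kappa_2$ is large. It therefore remains to show that a.a.s.
\[
S:=\sum_C \bigl|\,|\bX\cap C|-\mu\bigr| \le 3\eps n .
\]

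For the expectation, Cauchy--Schwarz gives $\ex\bigl|\,|\bX\cap C|-\mu\bigr| \le \sqrt{\operatorname{Var}} \le \sqrt{\mu}$. Hence $\ex S\le n_0\sqrt{\mu} = n/\sqrt{\mu}\le \eps n$ as soon as $\mu\ge \eps^{-2}$, which again follows from choosing $\kappa_2$ large enough given $\eps$ (and $d$).

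For concentration, $S$ is a function of the independent points $X_1,\dots,X_n$. Moving a single point changes the counts of at most two cells by one each, so it changes $S$ by at most $2$. McDiarmid's bounded differences inequality then gives
\[
\pr[S\ge \ex S+\eps n]\le \exp\bigl(-2\eps^2n^2/(4n)\bigr)=o(1).
\]
So a.a.s.\ $S\le 2\eps n$, and combining this with the rounding term shows that both the additions and the removals are at most $3\eps n\le 4\eps n$.

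The main point requiring care is the uniformity in $r$. When $n_0$ is small, for instance when $r$ is of constant order, each cell holds about $n/n_0$ points, which is huge; when $r$ is near $\kappa_2 n^{-1/d}$, we have $\mu$ only a large constant. The argument above covers both regimes at once: the bound on $\ex S$ uses only $\mu\ge\eps^{-2}$, and McDiarmid's inequality does not depend on $n_0$ at all. So I expect no real obstacle beyond making sure $\kappa_2$ is chosen so that $(\eps\kappa_2/2)^d\ge \eps^{-2}$. Finally, every added point is placed in the interior of its cell, so the convention that each point lies in the interior of a unique cell continues to hold for $\bX'$.
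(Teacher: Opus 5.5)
Your proof is correct, but it is organized quite differently from the paper's.

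\textbf{The paper's route.} It classifies cells by occupancy and counts each class separately. Using the Chernoff bound, the negative correlation of balls-and-bins occupancy events, and a union bound over sets of $\lceil \eps n_0\rceil$ (or $\lceil i^{-3}\eps n_0\rceil$) cells, it shows that a.a.s.
\begin{itemize}
\item at most $\eps n_0$ cells are sparse (fewer than $(1-\eps)N$ points);
\item at most $\eps n_0$ cells are $0$-dense;
\item at most $i^{-3}\eps n_0$ cells are $i$-dense for $1\le i\le\log^2 n$;
\item by a first-moment bound, no cell has $\lceil\log^2 n\rceil n/n_0$ or more points.
\end{itemize}
It then sums worst-case corrections over these classes using $\sum_{i\ge1}(i+1)i^{-3}\le 3$.

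\textbf{Your route.} You control the single global discrepancy $S=\sum_C\bigl|\,|\bX\cap C|-\mu\bigr|$. You bound its mean by $n_0\sqrt{\mu}=n/\sqrt{\mu}$ via the variance, and its deviations by McDiarmid with Lipschitz constant $2$. This gives failure probability $e^{-\eps^2 n/2}$, uniformly over the whole range of $r$, needing only $\mu\ge\eps^{-2}$.

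\textbf{Comparison.} Your approach avoids negative dependence, the layering of density levels, and the separate tail estimate. Your bookkeeping is also exhaustive by construction, since every cell's deficit or excess enters $S$. The paper's tally, by contrast, explicitly covers only the sparse and the $i$-dense cells. It leaves implicit the cells with between $(1-\eps)N$ and $N$ points, and those with between $N$ and $(1+\eps)n/n_0$ points. These can require up to about $\eps n$ further additions and removals, respectively. What the paper's route buys is finer structural information: how many cells deviate by each amount, and that no cell is grossly overfull. The lemma itself does not need this.

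As a minor refinement, note that $\sum_C(|\bX\cap C|-\mu)=0$. Hence $\sum_C(|\bX\cap C|-\mu)^+=\sum_C(\mu-|\bX\cap C|)^+=S/2$, which tightens your constants further.
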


To prove the claim we will use the following standard bound.
\begin{lem}[Chernoff--Hoeffding bound, as stated in~\cite{JLR}] \label{lem:chernoff}
    Let $X$ be distributed as $\Bin(m, s)$. Then, for any $\beta>0$,
     \[
    \pr(X\ge ms+\beta) \le \exp\rbrac{-\frac{\beta^2}{2(ms + \beta/3)}}
    \]
    and
    \[
    \pr(X\le ms-\beta) \le  \exp\rbrac{-\frac{\beta^2}{2ms}}.
    \]
   \end{lem}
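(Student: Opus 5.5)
This bound is classical and the paper cites it from~\cite{JLR}. If I had to prove it, I would use the exponential moment (Cram\'er--Chernoff) method. Write $\mu=ms$ and, for $\lambda\in\real$, apply Markov's inequality to $e^{\lambda X}$. Since $X$ is a sum of $m$ independent Bernoulli$(s)$ variables,
\[
\ex e^{\lambda X}=(1-s+se^{\lambda})^m\le \exp\rbrac{\mu(e^{\lambda}-1)},
\]
using $1+x\le e^x$.

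For the upper tail, take $\lambda>0$. Then $\pr(X\ge \mu+\beta)\le \exp\rbrac{\mu(e^\lambda-1)-\lambda(\mu+\beta)}$. Choosing $\lambda=\log(1+\beta/\mu)$ optimizes this and gives $\exp\rbrac{-\mu\,\varphi(\beta/\mu)}$, where $\varphi(x)=(1+x)\log(1+x)-x$.

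For the lower tail, take $\lambda<0$ and apply the same argument to $e^{\lambda X}$ with the event $X\le\mu-\beta$. The choice $\lambda=\log(1-\beta/\mu)$ gives $\exp\rbrac{-\mu\,\varphi(-\beta/\mu)}$, valid for $0<\beta<\mu$. For $\beta\ge\mu$ the lower tail is trivial: if $\beta>\mu$ the event is empty since $X\ge 0$, and if $\beta=\mu$ one bounds $\pr(X=0)=(1-s)^m\le e^{-\mu}\le e^{-\mu/2}$ directly.

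The remaining step, which I expect to be the only real work, is two elementary real inequalities:
\[
\varphi(x)\ge \frac{x^2}{2(1+x/3)}\quad (x\ge0),\qquad \varphi(-x)\ge \frac{x^2}{2}\quad (0\le x< 1).
\]
Substituting $x=\beta/\mu$ turns $\mu\varphi(\pm x)$ into exactly the two exponents in the statement.

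For the second inequality, let $g(x)=\varphi(-x)-x^2/2$. Then $g(0)=g'(0)=0$ and $g''(x)=\frac{1}{1-x}-1\ge0$, so $g\ge 0$ on $[0,1)$.

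For the first inequality, let $h(x)=\varphi(x)-\frac{x^2}{2(1+x/3)}$. I would first check $h(0)=h'(0)=0$. The cleanest route is then to show $h''(x)\ge0$: here $\varphi''(x)=\frac1{1+x}$, while the second derivative of $\frac{x^2}{2(1+x/3)}$ simplifies to $\frac{1}{(1+x/3)^3}$. The claim therefore reduces to $(1+x/3)^3\ge 1+x$, which is immediate from the binomial expansion. If that computation turned out messier than expected, I would fall back on comparing the Taylor series term by term.

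In the subsequent proof of Lemma~\ref{lem:trimming}, this bound will be applied with $X$ the number of points of $\bX$ in a fixed cell, so $X\sim\Bin(n,1/n_0)$. Taking $\beta$ a small multiple of $\eps n/n_0$ and combining with a union bound or a first moment count over the $n_0$ cells should control the total surplus and deficit.
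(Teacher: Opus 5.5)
Your proof is correct. The paper gives no proof of its own and simply cites \cite{JLR}; your argument is essentially the standard one from that source: Markov's inequality on $e^{\lambda X}$ gives $\exp(-\mu\varphi(\pm\beta/\mu))$, and then $\varphi(x)\ge x^2/(2(1+x/3))$ for $x\ge0$ and $\varphi(-x)\ge x^2/2$ for $0\le x<1$ follow from $\varphi(0)=\varphi'(0)=0$ and a second-derivative comparison, where your computation $\bigl(x^2/(2(1+x/3))\bigr)''=(1+x/3)^{-3}$ is right. The one case you leave implicit, $ms=0$, is trivial, since then $X=0$ almost surely.
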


\begin{proof}[Proof of Lemma~\ref{lem:trimming}]
The number of points of $\bX$ that land in a given cell is $\Bin(n,1/n_0)$. We say a cell is {\em sparse} if it contains fewer than $(1-\eps)N$ points. From our definition of $N$, the upper bound in~\eqref{eq:n0range} and our assumption on $\kappa_2$ to be sufficiently large given $\eps$,
\begin{equation}\label{eq:Nbounds}
n/n_0 \le N < (1 + n_0/n) n/n_0 < (1 + \eps/2) n/n_0,
\end{equation}
so each sparse cell must contain fewer than $(1-\eps)(1 + \eps/2) n/n_0 < (1-\eps/2)n/n_0$ points.
Note that, for any set of cells, the events that require that each cell in the set is sparse are negatively correlated (this is a standard Balls-and-Bins fact; see~e.g.~\cite{DR88}).
In view of this and using the Chernoff bound from Lemma~\ref{lem:chernoff}, the probability that there are at least $\eps n_0$ sparse cells is at most 
\begin{align}
    \binom{n_0}{\lceil\eps n_0\rceil} \exp\rbrac{-\frac{((\eps/2) n/n_0)^2}{2n/n_0}\cdot \lceil\eps n_0\rceil} &\le \rbrac{\frac{e n_0}{\lceil\eps n_0\rceil}}^{\lceil \eps n_0\rceil} \exp\rbrac{-(\eps^2 /8) n/n_0 \cdot \lceil\eps n_0\rceil}\notag\\
    & \le \rbrac{\frac{e}{\eps} \exp\rbrac{-(\eps^2/8) n/n_0}}^{\eps n_0}
    \notag\\
    &= e^{-\Omega(n)}=o(1) .\label{eq:sparse}
\end{align}
To justify the exponential decay in the last step, note that the previous upper-bound is increasing in $n_0$ (as easily shown by taking the derivative) and it is $e^{-\Omega(n)}=o(1)$ for any $n_0\sim (2/\eps\kappa_2)^d n$ as long as $\kappa_2$ is sufficiently large given $\eps$. Hence, it is $o(1)$ for any $n_0$ in the range described in~\eqref{eq:n0range}.
Then, a.a.s.\ there are at most $\eps n_0$ sparse cells.
Thus we can then make all sparse cells contain exactly $N$ points by adding at most $N \eps n_0 \le (1+\eps/2) \eps n < 4\eps n$ points in total.

The analysis of the cells with more than $N$ points is slightly more complicated, since the more points we have in a cell, the fewer of those cells we can allow.
We say a cell is $0$-{\em dense} if it contains at least $(1+\eps)n/n_0$ points but less than $2n/n_0$.
Now for $i=1,2,3,\ldots$, a cell is $i$-{\em dense} if it contains at least $(i+1)n/n_0$ points but less than $(i+2)n/n_0$. 
By a calculation analogous to that in~\eqref{eq:sparse}, but using the upper-tail of the Chernoff bound in Lemma~\ref{lem:chernoff}, the probability that there are at least $\eps n_0$ 0-dense cells is at most
\begin{align*}
    \binom{n_0}{\lceil\eps n_0\rceil} \exp\rbrac{-\frac{(\eps n/n_0)^2}{2(1+\eps/3)n/n_0}\cdot \lceil\eps n_0\rceil} &\le \rbrac{\frac{e n_0}{\lceil\eps n_0\rceil}}^{\lceil \eps n_0\rceil} \exp\rbrac{-(\eps^2 /3) n/n_0 \cdot \lceil\eps n_0\rceil}\\
    & \le \rbrac{\frac{e}{\eps} \exp\rbrac{-(\eps^2/3) n/n_0}}^{\eps n_0}\\
    & = e^{-\Omega(n)} = o(1),
\end{align*}
for sufficiently large $\kappa_2$ given $\eps$ (where the exponential decay is justified as in~\eqref{eq:sparse}).
By Chernoff again and from the negative correlation of the events that different cells are $i$-dense, the probability that there is some $i$ from $1$ to $\log^2 n$ such that the number of $i$-dense cells is at least $i^{-3}\eps n_0$ is bounded by
\begin{align*}
   & \sum_{i=1}^{\lfloor\log ^2 n\rfloor} \binom{n_0}{\lceil i^{-3}\eps n_0\rceil} \exp\rbrac{-\frac{(i n/n_0)^2}{(1 + i/3)n/n_0} \cdot \lceil i^{-3}\eps n_0\rceil}\\
   & \le \sum_{i=1}^{\lfloor\log ^2 n\rfloor} \rbrac{\frac{ei^3}{\eps } \exp\rbrac{- (3/4) i n/n_0}}^{i^{-3}\eps n_0} \\
   & = \sum_{i=1}^{\lfloor\log ^2 n\rfloor} \left[ \rbrac{ \left(\frac{ei^3}{\eps}\right)^{1/i} \exp\rbrac{- (3/4) n/n_0}}^{\eps n_0} \right]^{i^{-2}} \\
   & = \sum_{i=1}^{\lfloor\log ^2 n\rfloor} \left( e^{- \Omega(n)} \right)^{i^{-2}} 
 = e^{- \Omega(n/\log^2n)} = o(1),
\end{align*}
for $\kappa_2$ large enough given $\eps$.
Finally, by one last application of Chernoff, the expected number of cells that are $i$-dense for some $i\ge\log^2 n$ (i.e.~with at least $\lceil \log^2 n\rceil n/n_0$ points) is 
\[
n_0 \exp\rbrac{-\Omega((n/n_0) \log^2 n)}=o(1),
\]
and so a.a.s.\ there are none. 
Summarizing, a.a.s.\ there are at most $\eps n_0$ $0$-dense cells, at most $i^{-3}\eps n_0$ $i$-dense cells for $1\le i\le \log^2 n$ and no $i$-dense cell for $i\ge \log^2n$. Moreover, we can turn any $i$-dense cell into a cell with exactly $N$ points by removing at most $(i+1)n/n_0$ points.
Thus, a.a.s.\ the number of points we need to remove from cells with more than $N$ points is at most 
\[
(n/n_0)\eps n_0 + 
\sum_{i=1}^{\log^2 n} (i+1) (n/n_0) \cdot i^{-3} \eps n_0 \le 4 \eps n
\]
since $\sum_{i=1}^\infty (i+1)i^{-3} \le 3$. This completes the proof of the lemma.
\end{proof}

From now on, we will assume that the a.a.s.\ statements of Lemmas~\ref{lem:edges} and~\ref{lem:trimming} hold and will build a large triangle packing in $\cG_\Td(\bX,r)$ deterministically, although our construction will involve a random experiment.
Let $\bX'$ be the set of points given by Lemma~\ref{lem:trimming}, and let $\cG'$ be the graph with vertex set $\bX'$ where two vertices are adjacent if they are in distinct cells which are adjacent in $\cC$ (in other words, $\cG'$ is the blowup of $\cC$ by factor $N$). The precise location of each point of $\bX'$ is not relevant --- only the cell where it belongs matters.
Note that $\cG'$ is a spanning subgraph of $\cG_\Td(\bX',r)$ by construction. Indeed, any two points in $\bX'$ that are adjacent vertices in $\cG'$ must belong to adjacent cells in $\cC$, and thus must be also adjacent in $\cG_\Td(\bX',r)$. In view of that, any triangle packing in $\cG'$ gives a triangle packing in $\cG_\Td(\bX',r)$.
Furthermore, $\cG'$ is a regular graph (since it is a blowup of the regular graph $\cC$) and, in view of~\eqref{eq:degC} and~\eqref{eq:Nbounds}, every vertex of $\cG'$ has degree at most
\begin{equation}\label{eq:degG}
\theta r^d n_0 N \le (1+\eps/2) \theta r^d n
\end{equation}
Let $\cH'=\cH'(\cG')$ be the 3-uniform hypergraph with vertex set $E(\cG')$ where each edge of $\cH'$ consists of three elements of $E(\cG')$ forming a triangle in $\cG'$.
Note that $3$ points in $\bX'$ determine a triangle in $\cG'$ iff they belong to $3$ different pairwise-adjacent cells in $\cC$.
We will now start to describe a fractional matching $f$ for $\cH'$, which we will use to get a large triangle packing in $\cG'$.

Consider the following random experiment, in which we generate an equilateral triangle in the torus with the appropriate probability distribution. This is one crucial spot in the argument in which we assume $d\ge2$. (For $d=1$ we will consider a different experiment in Section~\ref{sec:d1} in which the triangles obtained are typically far from equilateral.)

\begin{RE}\label{REI}\hspace{0cm}
\begin{enumerate}[(i)]
\item\label{step:REI1}
We pick $3$ random points $Y_1,Y_2,Y_3\in [0,1]^d$ (not necessarily in $\bX$ or $\bX'$) that form an equilateral triangle of side length at most $r$ in the torus. We do this by picking $Y_1$ first uniformly in $[0,1]^d$. Then pick $Y_2$ uniformly in the ball of radius $r$ and center $Y_1$. Finally, pick $Y_3$ uniformly from all possible choices such that $Y_1,Y_2,Y_3$ determine an equilateral triangle in the torus (see Figure~\ref{fig:triangle}). Note that, given $Y_1$ and $Y_2$, point $Y_3$ must lie in a $(d-2)$-dimensional sphere (e.g.~two points when $d=2$ or a circle when $d=3$). Note that the distribution of each of the three points $Y_1, Y_2, Y_3$ individually is uniform on $[0,1]^d$. In fact, our random triangle $Y_1, Y_2, Y_3$ would have the same distribution if, for any permutation $(i, j, k)$ of $(1, 2, 3)$ we generated it by first revealing $Y_i$ (uniform on $[0,1]^d$), then revealing $Y_j$ (uniform on the ball of radius $r$ centered at $Y_i$), and then revealing $Y_k$ (uniform from all points equidistant from $Y_i, Y_j$). So $Y_i, Y_j, Y_k$ has the same distribution as $Y_1, Y_2, Y_3$.

\begin{figure}
\centering
\begin{tikzpicture}[scale=1] 
\tikzstyle{vert}=[shape=circle,draw=black,fill=white, inner sep=.5mm]
\tikzstyle{pnt}=[shape=circle,draw=black,fill=black, inner sep=.5mm]
\usetikzlibrary{decorations.pathreplacing}
\begin{scope}[]
\draw (-1,-1) rectangle (3,3);
\node[pnt,label=left:$Y1$] (Y1) at (1,1){};
\end{scope}
\begin{scope}[xshift=150]
\node[pnt,label=left:$Y1$] (Y1) at (1,1){};
\draw[dashed,fill=gray!15] (Y1) circle (1.2);
\draw (-1,-1) rectangle (3,3);
\node[pnt,label=left:$Y1$] (Y1) at (1,1){};
\draw[dashed] (Y1) circle (1.2);
\node[pnt,label=left:$Y2$] (Y2) at (1.5,1.866){};
\end{scope}
\begin{scope}[xshift=300]
\draw (-1,-1) rectangle (3,3);
\node[pnt,label=left:$Y1$] (Y1) at (1,1){};
\node[pnt,label=above:$Y2$] (Y2) at (1.5,1.866){};
\node[vert,label=left:] (Y4) at (.57, 1.9){};
\node[pnt,label=right:$Y3$] (Y3) at (2, 1){};
\foreach \x/\y in {Y1/Y2,Y2/Y3,Y3/Y1} { \draw (\x) -- (\y); }	
\end{scope}
\end{tikzpicture}
\caption{The random experiment to generate equilateral triangles for $d=2$. After choosing $Y_1$ and $Y_2$ there are exactly two choices for $Y_3$.}
\label{fig:triangle}
\end{figure}

\item\label{step:REI2}
Now we attempt to generate a random triangle $T$ of $\cG'$ (thus an edge of $\cH'$) and a random edge $E$ of $\cG'$ (thus a vertex of $\cH'$) contained in $T$. Moreover, $T$ will be ``nearly'' equilateral in the sense that all three edges will have similar lengths. This attempt may fail (and return $T=\emptyset$ and/or $E=\emptyset$) with some small probability that we will bound later. For each $i=1,2,3$, let $C_i$ be cell that contains the random point $Y_i$. Then we choose one point $X'_i$ from $\bX'\cap C_i$ uniformly at random and independently from all other choices. If $\{C_1,C_2\}$ is an edge of $\cC$ (that is, both cells are different and have centers at distance at most $(1-d\eps)r$), then we define $E=\{X'_1,X'_2\}$. By construction, $E$ is a random edge of the blowup graph $\cG'$. Otherwise, if $C_1,C_2$ are not adjacent in $\cC$, then we {\em fail} to generate an edge of $\cG'$ and set $E=\emptyset$. This can only happen when the distance between $Y_1$ and $Y_2$ is very small or very close to $r$. Similarly, if $\{C_1,C_2,C_3\}$ is a triangle in $\cC$, then we define $T$ to be the triangle in $\cG'$ determined by vertices $X'_1,X'_2,X'_3$. In that case, $E$ is an edge of $T$ (so $E\ne\emptyset$). Moreover, since the joint distribution of $Y_1,Y_2,Y_3$ (and thus of $X'_1,X'_2,X'_3$) is invariant under any permutation of the indices, then $E$ is distributed as choosing the triangle $T$ first and then picking a random edge of $T$ with probability $1/3$. Otherwise, if $\{C_1,C_2,C_3\}$ is not a triangle in $\cC$, then we {\em fail} to generate a triangle in $\cG'$ and simply define $T=\emptyset$. Note that we may have $T=\emptyset$ but $E\ne\emptyset$ if $\{C_1,C_2\}$ is an edge of $\cC$ but $\{C_1,C_2,C_3\}$ is not a triangle of $\cC$. (Again, this may only happen if a distance between pairs of $Y_i$'s is very small or very close to $r$.)
\end{enumerate}
\end{RE}

The following lemma encapsulates some of the key properties of the joint distribution of $T,E$ as described in Random Experiment~\ref{REI}. In Section~\ref{sec:general}, we will describe a more sophisticated generalization of the experiment for which the lemma below will still apply (with a trivial modification of the constants). In Section~\ref{sec:d1}, we will give yet another version of the random experiment (in which the triangles generated are far from equilateral) but for which the resulting joint distribution of $T,E$ will still satisfy the conclusion of the lemma.
\begin{lem}\label{lem:ET}
For any vertex $e$ in $\cH'$ and any edge $t$ in $\cH'$ that contains $e$,
\begin{enumerate}
\item
$\displaystyle \pr(E=e) = \frac{2s^{2d}}{\theta r^d N^2},$
\medskip
\item
$\displaystyle \pr(T=t,E=e) = \frac{1}{3} \pr(T=t)$,
\medskip
\item
$\displaystyle \pr(T=t \mid E=e) = \frac{\theta r^d N^2}{6s^{2d}} \pr(T=t)$.
\end{enumerate}
\end{lem}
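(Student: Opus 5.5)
Fix a vertex $e=\{X,X'\}$ of $\cH'$, that is, an edge of $\cG'$. Its endpoints lie in distinct cells $C,C'$ that are adjacent in $\cC$. The plan is to compute $\pr(E=e)$ directly, then read off (2) from the symmetry of Random Experiment~\ref{REI}, and get (3) by dividing.

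\textbf{Part (1).} The event $E=e$ happens when one of two disjoint things occurs: $Y_1\in C, Y_2\in C'$ and then $X'_1=X, X'_2=X'$; or the same with the roles of $(C,X)$ and $(C',X')$ swapped. Since $C\ne C'$ and $\{C,C'\}\in\cC$, in either case the experiment does output $E=\{X'_1,X'_2\}$ and does not fail.

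The point $Y_1$ is uniform on the torus, so $\pr(Y_1\in C)=s^d$. Given $Y_1$, the point $Y_2$ is uniform on the ball of radius $r$ around $Y_1$, which has volume $\theta r^d$ (no wrapping, since $r\le 1/2$). The key observation is that every point of $C'$ lies within distance $r$ of every point of $C$: the centers are at distance at most $(1-d\eps)r$, and the cell diameters add at most $\sqrt d\, s\le d\eps r$. Hence $C'$ lies entirely in that ball, and $\pr(Y_2\in C'\mid Y_1)=s^d/(\theta r^d)$ for every $Y_1\in C$.

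The choices of $X'_1,X'_2$ are independent and uniform among the $N$ points in each cell (Lemma~\ref{lem:trimming}), giving a factor $1/N^2$. Each of the two orderings therefore contributes $s^{2d}/(\theta r^d N^2)$, and their sum gives (1).

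\textbf{Part (2).} Let $t$ be the triangle on the points $X,X',X''$, with $e\subseteq t$. I would use the exchangeability noted in Random Experiment~\ref{REI}: the joint law of $(Y_1,Y_2,Y_3)$, and hence of $(X'_1,X'_2,X'_3)$, is invariant under permutations of the indices. On the event $T=t$, the triple $(X'_1,X'_2,X'_3)$ is one of the $6$ orderings of $t$'s vertices, and all $6$ are equally likely. Exactly $2$ of these orderings put $\{X'_1,X'_2\}=e$. So $\pr(T=t,E=e)=\frac{2}{6}\pr(T=t)=\frac13\pr(T=t)$. Note that when $T=t$ the edge $E$ is automatically defined, since $\{C_1,C_2\}$ is then an edge of $\cC$.

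\textbf{Part (3).} This is just (2) divided by (1).

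The only delicate step is the geometric containment in (1): the conditional probability must be exactly $s^d/(\theta r^d)$, not merely approximately. This is where I would check carefully that $(1-d\eps)r+\sqrt d\,s\le r$, using $s\le\eps r$ and $\sqrt d\le d$.
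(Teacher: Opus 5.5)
Your proof is correct and follows essentially the same route as the paper. Part (1) computes $s^d\cdot\frac{s^d}{\theta r^d}\cdot\frac{1}{N^2}$ for each of the two orderings of the cells; part (2) uses the permutation invariance of $(Y_1,Y_2,Y_3)$, so that $E$ is a uniformly random edge of $T$ (your ``$2$ of $6$ orderings'' count is the same observation); and part (3) is division. Your only addition is the explicit check $(1-d\eps)r+\sqrt d\,s\le r$, showing that the whole cell lies in the radius-$r$ ball around $Y_1$, which the paper simply asserts.
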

\begin{proof}
From our earlier description of the random generation of $T$ and $E$, the distribution of $E$ given that $T=t$ (so in particular $T\ne\emptyset$) is uniform on all three edges of triangle $t$ (i.e.~each edge is chosen with probability $1/3$). This immediately gives part~(2).
To show part~(1), let $e=\{\hat X_1,\hat X_2\}$ be an edge of $\cG'$ with $\hat X_i$ contained in cell $\hat C_i$, for $i=1,2$. The probability that $Y_1,Y_2$ land in cells $\hat C_1,\hat C_2$ (in either order) is $2s^{2d}/\theta r^d$. Indeed, $Y_1$ lands in $C_1$ with probability $s^d$ and, conditional on that, $Y_2$ lands in $\hat C_2$ with probability $s^d/\theta r^d$, since it is uniformly distributed in a ball of radius $r$ around $Y_1$ which fully contains $C_2$.
The factor of $2$ accounts for the probability that $Y_1$ lands in $\hat C_2$ and $Y_2$ lands in $\hat C_1$.
Conditional on $Y_1,Y_2$ being in cells $\hat C_1,\hat C_2$, the probability that we pick precisely points $\hat X_1,\hat X_2$ from these cells is $1/N^2$.
Part~(3) follows immediately from dividing $\pr(T=t,E=e)$ by $\pr(E=e)$.
\end{proof}

In view of Lemma~\ref{lem:ET}, $\pr(T=t\mid E=e)$ does not depend on our choice of $e$ (as long as $e\subset t$). Hence, for our fractional matching $f$ on $\cH'$, if $t$ is a triangle of $\cG'$ we will let
\begin{equation}
\label{eq:fdef}
f(t) = \pr(T=t\mid E=e) = \frac{\theta r^d N^2}{6s^{2d}} \pr(T=t),
\end{equation}
where $e$ is any edge of $\cG'$ contained in $t$. To check that $f$ is indeed a fractional matching, note that for any edge $e$ of $\cG'$,
\begin{equation}
\label{eq:fracmatch}
\suma{t\in \cH'\\t\supset e} f(t) = \suma{t\in \cH'\\t\supset e} \pr(T=t\mid E=e) = \pr(T\ne\emptyset \mid E=e) \le 1,
\end{equation}
where the second equality follows from the law of total probability, since (conditional on $E=e$) $T$ must either be a triangle containing $e$ or $T=\emptyset$ if our experiment fails to produce a triangle.

In order to bound $\alpha(f)$, we first observe that, for any triangle $t$ and any edge $e$ contained in that triangle,
\begin{equation}\label{eq:ftbound}
f(t) = \pr(T=t\mid E=e) \le 1/N,
\end{equation}
since (conditional on $E=e$) we may take for granted that the third point of $T$ lands in the correct cell but still need to pick the correct point of $\bX'$ in that cell. From~\eqref{eq:ftbound} and since any two vertices of $\cH'$ (i.e.~edges of $\cG'$) are in at most one edge of $\cH'$ (i.e.~a triangle),  we have 
\begin{equation}\label{eq:alphabound}
\alpha(f) \le \frac 1N  \le \frac{n_0}{n} \le \left(\frac{2}{\eps\kappa_2}\right)^d, 
\end{equation}
where we have used the definition of $N$ and the upper bound of $n_0$ in~\eqref{eq:n0range}. Let $\sigma$ be the number whose existence is guaranteed by Lemma~\ref{lem:Kahn} for $\ell=3$ and our value $\eps$. Then we can ensure that $\alpha(f) < \sigma$ by choosing $\kappa_2 > 2 \sigma^{-1/d} \eps^{-1}$.
Now Lemma~\ref{lem:Kahn} tells us that $\cH'$ has a matching of size at least $(1-\eps)f(\cH')$ which we will estimate below.

\begin{lem}\label{lem:fH}
Let $c=3d^2+1$ and suppose $\epsilon>0$ is small enough given $d$. Then
\[
f(\cH') \ge (1-c\eps) \frac{1}{6}\theta r^d n^2.
\]
\end{lem}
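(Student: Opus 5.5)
By \eqref{eq:fdef}, summing over all triangles $t$ of $\cG'$ gives
\[
f(\cH') = \sum_{t\in\cH'} f(t) = \frac{\theta r^d N^2}{6s^{2d}}\,\pr(T\ne\emptyset).
\]
Since $N\ge n/n_0$ and $s^{d}=1/n_0$, we have $N^2/s^{2d}\ge n^2$. Hence $f(\cH')\ge \frac16\theta r^d n^2\,\pr(T\ne\emptyset)$, and the whole lemma reduces to the estimate
\[
\pr(T=\emptyset)\le c\eps=(3d^2+1)\eps
\]
for Random Experiment~\ref{REI}.

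The plan is to bound the failure probability by a union bound over the three pairs $\{Y_i,Y_j\}$. By the symmetry noted in step~\eqref{step:REI1}, each pair $(Y_i,Y_j)$ is distributed like $(Y_1,Y_2)$. That is, $Y_i$ is uniform on the torus, and $Y_j$ is uniform in the ball of radius $r$ around $Y_i$. So $\pr(T=\emptyset)\le 3\,\pr(C_1,C_2 \text{ not adjacent in } \cC)$.

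The cells $C_1,C_2$ fail to be adjacent only in two cases: either $C_1=C_2$, or their centers are at distance greater than $(1-d\eps)r$. Each center lies within $\sqrt d\,s/2\le \sqrt d\,\eps r/2$ of the corresponding point. Therefore the second case forces
\[
\|Y_1-Y_2\| > (1-d\eps-\sqrt d\,\eps)r \ge (1-2d\eps)r .
\]
The first case forces $\|Y_1-Y_2\|\le \sqrt d\,s\le \sqrt d\,\eps r$. The radial distribution of $Y_2$ about $Y_1$ is explicit: $\pr(\|Y_1-Y_2\|\le \rho r)=\rho^d$. This gives the following bounds:
\[
\pr(\text{outer shell})\le 1-(1-2d\eps)^d\le 2d^2\eps,
\qquad
\pr(\text{inner ball})\le (\sqrt d\,\eps)^d .
\]
For $d\ge2$ and small $\eps$, the inner term is at most $\eps/3$, say. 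For $d=1$ it is at most $\eps$; this is harmless, and in any case that experiment is replaced in Section~\ref{sec:d1}. Altogether $\pr(T=\emptyset)\le 3(2d^2\eps+\eps/3)$, which is $6d^2\eps+\eps$.

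This exceeds $c\eps$ by a factor of about $2$ in the main term, so the constants need more care. I expect this bookkeeping to be the only real obstacle. To remove the lost factor I would sharpen the geometry. For center distance to exceed $(1-d\eps)r$ one actually needs $\|Y_1-Y_2\|>(1-d\eps)r-\sqrt d\,s$, and $\sqrt d\le d$. This yields a shell of relative width $(d+\sqrt d)\eps\le 2d\eps$, whose probability is at most about $d(d+\sqrt d)\eps$. I would also exploit that the union bound over three pairs double counts little.

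If the stated constant still does not drop out, I would allow $\eps$ small given $d$ to absorb second-order terms. I would also check whether the author's intended inequality uses $N\ge n/n_0$ more tightly. In the worst case the proof goes through with a constant of order $d^2$, which is all that matters downstream.
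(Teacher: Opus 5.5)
Your reduction is sound, and it takes a different route from the paper's proof of this lemma. The paper proceeds per edge: it shows $\pr(T\ne\emptyset\mid E=e)=1$ whenever the cell centers of $e$ are at distance in $[3d\eps r,(1-3d\eps)r]$ (Claim~\ref{claim:x}), counts such edges via an annulus volume (Claim~\ref{claim:Ageneral}), and uses $f(\cH')\ge|A|/3$. You instead use the global identity $f(\cH')=\frac{\theta r^dN^2}{6s^{2d}}\pr(T\ne\emptyset)$ and bound the unconditional failure probability. This is how the paper itself handles $d=1$ in Lemma~\ref{lem:fH3}, and it avoids counting cells whose centers lie in an annulus.

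There is a gap in the bookkeeping, though. As written, you get $\pr(T=\emptyset)\le(6d^2+1)\eps$, which proves the lemma only with $c=6d^2+1$, not the stated $c=3d^2+1$. Your proposed repairs do not close this. With the sharper shell of relative width $(d+\sqrt d)\eps$, the union bound over three pairs still gives $3d(d+\sqrt d)\eps>(3d^2+1)\eps$. The idea that the union bound ``double counts little'' is backwards: here the three failure events essentially coincide, so the union bound loses a factor of about $3$.

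The missing observation is that in Random Experiment~\ref{REI} the triangle $Y_1Y_2Y_3$ is equilateral, so every pair is at the same distance $R=\dist_\Td(Y_1,Y_2)$. Then no union bound is needed. For any pair $i\ne j$:
\begin{itemize}
\item $C_i=C_j$ forces $R\le\sqrt d\,s\le\sqrt d\,\eps r$;
\item center distance greater than $(1-d\eps)r$ forces $R>(1-d\eps)r-\sqrt d\,s\ge(1-(d+\sqrt d)\eps)r$.
\end{itemize}
Using $\pr(R\le\rho r)=\rho^d$, this gives
\[
\pr(T=\emptyset)\le(\sqrt d\,\eps)^d+1-\bigl(1-(d+\sqrt d)\eps\bigr)^d\le(\sqrt d\,\eps)^d+d(d+\sqrt d)\eps\le(2d^2+1)\eps
\]
for $d\ge2$ and $\eps$ small given $d$. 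This proves the lemma, in fact with a better constant. Your remark that any $c=O_d(1)$ suffices downstream is true, but it does not establish the statement as posed.
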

The proof of Lemma~\ref{lem:fH} can be easily extended to cover the case of general $F$-packings for $F\in\cU(d)$ (with an easy modification of some of the constants). We will discuss that in Section~\ref{sec:general} in more detail (cf.~Lemma~\ref{lem:fH2}). The statement of the lemma is also true for the triangle case in dimension $d=1$. However, the proof for that case will be somewhat different than the one here (since the triangles will no longer be equilateral) and will be argued in Section~\ref{sec:d1}.
\begin{proof}[Proof of Lemma~\ref{lem:fH}]
The conclusion will follow easily from the two claims below.
\begin{claim}\label{claim:x}
Suppose an edge $e$ of $\cG'$ has endpoints in the cells $\hat C_1, \hat C_2$. Let the distance between the centers of $\hat C_1$ and $\hat C_2$ be $x$, and suppose 
  \begin{equation}\label{eq:xbounds}
      3d \eps r \le x \le (1-3d\eps)r.
  \end{equation}
  Then  $\pr(T\ne\emptyset \mid E = e)  = 1$.
\end{claim}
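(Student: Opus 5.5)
Conditioning on $E=e$ means conditioning on $\{Y_1,Y_2\}$ landing in $\{\hat C_1,\hat C_2\}$, with $e$ the pair of chosen points. The plan is to show that for every such realization of $Y_1,Y_2$, every admissible third point $Y_3$ lies in a cell $C_3$ with $\{\hat C_1,\hat C_2,C_3\}$ a triangle of $\cC$. Then $T\neq\emptyset$ surely, because the choice of $X'_3$ in $C_3$ never fails: each cell contains exactly $N\ge1$ points of $\bX'$.

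First bound the side length $y=\|Y_1-Y_2\|$. Each $Y_i$ is within distance $\sqrt d\,s/2$ of its cell center, and $s\le\eps r$, so $|y-x|\le \sqrt d\,\eps r\le d\eps r$. By \eqref{eq:xbounds},
\[
2d\eps r\le y\le (1-2d\eps)r .
\]
The triangle $Y_1Y_2Y_3$ is equilateral with side $y$, so $\|Y_3-Y_1\|=\|Y_3-Y_2\|=y$. Since $y\le r\le1/2$, all these distances are honest Euclidean distances in a local chart of the torus.

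Next, let $C_3$ be the cell containing $Y_3$, and let $z$ be the distance between the centers of $C_3$ and $\hat C_1$. Moving each endpoint to its cell center changes the distance by at most $\sqrt d\,s/2$ per endpoint, so $|z-y|\le \sqrt d\,\eps r\le d\eps r$. Hence
\[
d\eps r\le z\le (1-d\eps)r .
\]
The upper bound gives the adjacency condition of $\cC$. The lower bound gives $z\ge d\eps r>\sqrt d\,s$, which exceeds the distance between the centers of two equal cells (zero) and also rules out $C_3=\hat C_1$. Thus $C_3$ and $\hat C_1$ are distinct and adjacent. The same argument applies to $C_3$ and $\hat C_2$.

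The pair $\hat C_1,\hat C_2$ is already an edge of $\cC$. Indeed, $x\ge3d\eps r>0$, so the cells are distinct, and $x\le(1-3d\eps)r\le(1-d\eps)r$. Therefore $\{\hat C_1,\hat C_2,C_3\}$ is a triangle of $\cC$, so $T\ne\emptyset$, and the probability is $1$.

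The only real obstacle is the bookkeeping of constants: the worst-case displacement $\sqrt d\,s/2$ per point must be charged twice at each step, and we need $\sqrt d\le d$. The $3d$ margin in \eqref{eq:xbounds} is designed to absorb two such losses of $d\eps r$ each while still leaving the positive margin $d\eps r$ that separates cells.
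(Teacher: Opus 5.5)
Your proof is correct and follows the paper's argument essentially verbatim: you show the side length of the equilateral triangle lies within $d\eps r$ of $x$, then that the distance from the center of $C_3$ to each of the centers of $\hat C_1,\hat C_2$ lies within another $d\eps r$ of that side length. This gives the window $[d\eps r,(1-d\eps)r]$, which yields both distinctness and adjacency in $\cC$. Your explicit $\sqrt d\,s/2$ per-point bookkeeping, and your check that $\{\hat C_1,\hat C_2\}$ is an edge of $\cC$ (automatic, since $e$ is an edge of $\cG'$), only spell out details the paper leaves implicit.
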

\begin{claim}\label{claim:A}
Let $A$ be the set of edges in $\cG'$ that satisfy the hypothesis of Claim~\ref{claim:x}. Then
\[
|A| \ge \frac{1}{2}(1-8d^2\eps)\theta r^d n^2.
\]
\end{claim}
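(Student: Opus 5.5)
The count reduces to a lattice-point count for cell centers. Since $\cG'$ is the $N$-fold blowup of the regular graph $\cC$, every pair of adjacent cells contributes exactly $N^2$ edges of $\cG'$. Because $x\le(1-3d\eps)r\le(1-d\eps)r$ and $x>0$, any two distinct cells whose centers are at distance $x$ satisfying~\eqref{eq:xbounds} are adjacent in $\cC$. By translation invariance on the torus, each cell has the same number $D$ of cells whose centers are at distance in $[3d\eps r,(1-3d\eps)r]$ from its own center. Hence
\[
|A| = \tfrac12 n_0 D N^2 \ge \tfrac12 n_0 D (n/n_0)^2 = \tfrac12 D\, n^2/n_0 = \tfrac12 D s^d\, n^2 ,
\]
using $N\ge n/n_0$ and $n_0=s^{-d}$. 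It therefore suffices to show $D\ge (1-8d^2\eps)\theta r^d s^{-d}$.

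To bound $D$, fix a cell $C_0$ with center $z$, and write $R=(1-3d\eps)r$ and $a=3d\eps r$. Each cell has diameter $\sqrt d\, s$, so every point of a cell is within $\sqrt d\, s/2$ of that cell's center. Two volume comparisons follow.
\begin{itemize}
\item[(a)] The cells with center within distance $R$ of $z$ cover the ball of radius $R-\sqrt d s/2$ around $z$. Since all radii are below $1/2$, balls do not wrap around the torus. So the number of such cells is at least $\theta (R-\sqrt d s/2)^d/s^d$.
\item[(b)] The cells with center at distance less than $a$ from $z$ are contained in the ball of radius $a+\sqrt d s/2$. So their number is at most $\theta(a+\sqrt d s/2)^d/s^d$.
\end{itemize}
Subtracting (b) from (a) and using $s\le \eps r$ gives
\[
D \ge \theta r^d s^{-d}\Big[(1-(3d+\tfrac{\sqrt d}{2})\eps)^d-((3d+\tfrac{\sqrt d}{2})\eps)^d\Big].
\]

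The last step is elementary estimation, and the only point needing care is the lower annulus term. By Bernoulli's inequality, the first bracketed term is at least $1-(3d^2+d^{3/2}/2)\eps\ge 1-3.5d^2\eps$.
\begin{itemize}
\item For $d=1$, the subtracted term is $3.5\eps$, so the bracket is at least $1-7\eps\ge 1-8\eps$.
\item For $d\ge2$, the subtracted term $(3.5d\eps)^d$ is $O(\eps^2)$, hence at most $4.5d^2\eps$ once $\eps$ is small enough given $d$. This uses the hypothesis of Lemma~\ref{lem:fH} that $\eps$ is small.
\end{itemize}
In both cases the bracket is at least $1-8d^2\eps$. Combining with the display for $|A|$ gives $|A|\ge\frac12(1-8d^2\eps)\theta r^d n^2$, which proves Claim~\ref{claim:A}.
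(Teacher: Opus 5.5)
Your proof is correct and follows the same route as the paper, which proves the general Claim~\ref{claim:Ageneral} and specializes to $\alpha_1=\alpha_2=3$: count unordered pairs of cells whose centers are at distance in $[3d\eps r,(1-3d\eps)r]$ by comparing volumes with $s^d$, note that all such pairs are adjacent in $\cC$, and multiply by $N^2\ge (n/n_0)^2$. Your adjustment of the radii by $\sqrt d\, s/2$ is a worthwhile refinement rather than a detour. The paper lower-bounds the number of cells \emph{intersecting} the annulus $L$, but the admissible choices for $C_j$ are the cells whose \emph{centers} lie in $L$, which is a subset of those; your version closes that gap, and the extra loss still fits within the constant $8d^2$.
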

Assuming the two claims to be true, we conclude that
\begin{equation}
\label{eq:fHbound}
f(\cH') = \sum_{t\in\cH'} f(t) = \frac{1}{3} \sum_{e \in V(\cH')} \suma{t\in\cH'\\ t\supset e} \pr(T=t\mid E=e)
= \frac{1}{3}  \sum_{e \in V(\cH')} \pr(T\ne\emptyset\mid E=e) \ge |A|/3,
\end{equation}
where the $1/3$ factor accounts for the fact that triangles in the double sum are counted $3$ times each, and the last inequality follows from Claim~\ref{claim:x} and restricting the last sum to only $e\in A$. The proof of the lemma follows immediately from~\eqref{eq:fHbound} and Claim~\ref{claim:A}.

It just remains to prove Claims~\ref{claim:x} and~\ref{claim:A}. For the first claim, let $Y_1,Y_2,Y_3$ be the random points in Random Experiment~\ref{REI} (which form an equilateral triangle in the torus $\Td$ of edge length $R$) and let $C_1,C_2,C_3$ be the random cells that contain them, respectively. Conditional on $E=e$, we must have $\{C_1,C_2\}=\{\hat C_1,\hat C_2\}$ (w.l.o.g.~assume $C_1= \hat C_1$ and $C_2= \hat C_2$). In particular and since cells have side $s\le \eps r$, we must have
\[
x - d\eps r \le R \le x + d\eps r.
\]
Then, for $i=1,2$, the distance between the centers of $C_i$ and $C_3$
is at least
\[
R - d\eps r \ge x - 2d\eps r \ge d\eps r
\]
and at most
\[
R + d\eps r \le x + 2d\eps r \le (1-d\eps)r.
\]
Hence, $C_3$ is different from and adjacent to both of $C_1,C_2$ in the graph of cells $\cC$. As a result, the experiment always succeeds at generating a triangle of $\cG'$, i.e.~$T\ne\emptyset$. This finishes the proof of Claim~\ref{claim:x}.

Instead of proving Claim~\ref{claim:A}, we will prove a slightly more general statement which we will also use later on in Section~\ref{sec:general}.
\begin{claim}\label{claim:Ageneral}
For $\alpha_1,\alpha_2>0$, let $A(\alpha_1,\alpha_2)$ be the set of all edges $e$ of $\cG'$ such that the distance $x$ between the centers of the cells that contain the endpoints of $e$ satisfies
\[
\alpha_1 d\eps r \le x \le (1-\alpha_2 d\eps) r.
\]
Then $\displaystyle |A(\alpha_1,\alpha_2)| \ge \frac12 \left( (1-\alpha_2 d\eps)^d - (\alpha_1 d\eps)^d \right) \theta r^d n^2$.
\end{claim}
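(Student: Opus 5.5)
The plan is to reduce the claim to counting lattice points in a spherical shell, using that $\cG'$ is the $N$-fold blowup of the cell graph $\cC$. Every edge $e$ of $\cG'$ joins two points in distinct adjacent cells $C,C'$, and each pair of adjacent cells carries exactly $N^2$ edges of $\cG'$. So $|A(\alpha_1,\alpha_2)| = N^2\cdot|\{\{C,C'\}\in E(\cC): \alpha_1 d\eps r\le \|c(C)-c(C')\|\le (1-\alpha_2 d\eps)r\}|$, where $c(\cdot)$ denotes the center. Cell centers form a translate of the lattice $s\ent^d$ on the torus, and every cell sees the same configuration. Hence this equals $\frac12 n_0 N^2 D$, where $D$ is the number of nonzero points $z\in s\ent^d$ with $a:=\alpha_1 d\eps r\le\|z\|\le b:=(1-\alpha_2 d\eps)r$. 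Since $r\le 1/2$, these points do not wrap around the torus.

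One remark on adjacency. If $\alpha_2<1$, some of these pairs are at center distance above $(1-d\eps)r$ and are not edges of $\cC$. In every application $\alpha_2\ge 1$, since Claim~\ref{claim:A} has $\alpha_1=\alpha_2=3$. I will therefore assume $\alpha_2\ge1$, so that $b\le(1-d\eps)r$ and every such pair is an edge of $\cC$. For the same reason $a>0$ excludes $z=0$.

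Next I use $N\ge n/n_0$ from~\eqref{eq:Nbounds}, which gives $\frac12 n_0N^2D\ge \frac12 (n^2/n_0)D$. The claim then reduces to the lattice inequality
\[
D \;\ge\; \frac{\theta (b^d-a^d)}{s^d} = \big((1-\alpha_2 d\eps)^d-(\alpha_1 d\eps)^d\big)\theta r^d n_0 ,
\]
using $n_0=s^{-d}$. In other words, I must show that the number of points of $s\ent^d$ in the shell $\{a\le\|z\|\le b\}$ is at least the shell's volume divided by the cell volume.

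The main obstacle is this lattice-point inequality. A naive covering argument assigns to each point $y$ of the shell the center of the cell containing $y$, which lies within $\sqrt d\, s/2\le \frac{d}{2}\eps r$ of $y$. That only gives $D\ge \theta((b-\sqrt d s/2)^d-(a+\sqrt d s/2)^d)/s^d$, which loses a shift of $\frac12$ in each $\alpha_i$, and this loss cannot be absorbed into the stated bound.

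What I would try first is to compare $D$ with its translation average. The function $g(u)=|\{z\in s\ent^d: a\le\|z+u\|\le b\}|$ has average exactly $\theta(b^d-a^d)/s^d$ over $u$ in one cell. So it suffices to show that $u=0$, the symmetric position in which the origin is itself a lattice point, is not below average. I would attempt this via the central symmetry of both the lattice and the shell, expanding $g$ in its Fourier series over the dual lattice. Failing that, I would use the slack available downstream. Claim~\ref{claim:A} only needs the bound with $\alpha_i=3$, while Claim~\ref{claim:x} already works in the window with constant $3d$. So the shifted covering argument applied with $\alpha_i-\frac12$ in place of $\alpha_i$ might still yield what is needed.
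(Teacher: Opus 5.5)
\textbf{There is a genuine gap.} The central lattice inequality is never proved, and it cannot be, because the statement as written is false.

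\textbf{What is right.} Your reduction is correct and sharper than what the paper writes. For $\alpha_2\ge1$ one has $|A(\alpha_1,\alpha_2)|=\frac12 n_0N^2D$. You are right that some restriction like $\alpha_2\ge1$ is needed, since pairs at center distance above $(1-d\eps)r$ are not edges of $\cC$. When $n_0\mid n$ (so $N=n/n_0$), the claim is \emph{equivalent} to $D\ge\theta(b^d-a^d)/s^d$.

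\textbf{Why the first plan fails.} Showing that the centered position $u=0$ is not below the translation average cannot work, because the inequality is false in general. As in the Gauss circle problem, the number of lattice points in a shell centered at a lattice point can fall below the volume. Take $d=2$, $1/(\eps r)\in\ent$ (so $s=\eps r$), $n_0\mid n$, $\alpha_1=0.55$ and $\alpha_2=(1-4.9\eps)/(2\eps)$, which is at least $1$ for $\eps\le 1/7$. Then $a/s=1.1$, $b/s=4.9$, and $D=\#\{z\in\ent^2: 1.1\le\|z\|\le4.9\}=69-5=64$, whereas $\pi(4.9^2-1.1^2)\approx 71.6$. So $|A(\alpha_1,\alpha_2)|=32n^2/n_0$, while the claimed lower bound is about $35.8\,n^2/n_0$.

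\textbf{The paper has the same gap.} The paper notes that at least $\vol(L)/s^d$ cells \emph{intersect} the shell $L$, and concludes that there are that many choices for $C_j$. But the admissible $C_j$ are the cells whose \emph{centers} lie in $L$, which is a subset of the cells meeting $L$. The inequality therefore runs the wrong way; this is exactly the half-cell loss you flagged.

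\textbf{What to do instead.} Commit to your covering argument and record what it gives for $A(\alpha_1,\alpha_2)$ itself, rather than trading $\alpha_i$ for $\alpha_i-\frac12$. Let $c$ be the center of $C_i$. Every cell meeting the shell $\{y:(\alpha_1+\frac12)d\eps r\le\|y-c\|\le(1-(\alpha_2+\frac12)d\eps)r\}$ has its center in $L$, because $\sqrt d\,s/2\le\frac d2\eps r$. Hence, still for $\alpha_2\ge1$,
\[
|A(\alpha_1,\alpha_2)|\ \ge\ \tfrac12\Big(\big(1-(\alpha_2+\tfrac12)d\eps\big)^d-\big((\alpha_1+\tfrac12)d\eps\big)^d\Big)\theta r^dn^2 .
\]
This weakened form is all that is used downstream. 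With $\alpha_1=\alpha_2=3$ it gives $|A|\ge\frac12\big(1-\frac72d^2\eps-(\frac72d\eps)^d\big)\theta r^dn^2\ge\frac12(1-8d^2\eps)\theta r^dn^2$ for small $\eps$, so Claim~\ref{claim:A} holds as stated. In Lemma~\ref{lem:fH2} only the constant $c$ changes.
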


Clearly, setting $\alpha_1=\alpha_2=3$ in Claim~\ref{claim:Ageneral}, gives
\[
|A| = |A(3,3)| \ge \frac12 \left( (1-3 d\eps)^d - (3 d\eps)^d \right) \theta r^d n^2
\ge \frac12 \left( 1 - 3 d^2\eps - (3 d\eps)^d \right) \theta r^d n^2.
\]
Then, assuming $(3 d\eps)^d < \eps$ (which is possible for small $\eps$ since $d\ge2$), we obtain Claim~\ref{claim:A}.

Finally, we proceed to prove Claim~\ref{claim:Ageneral}.
Assume $1-\alpha_2 d\eps>0$ (otherwise $|A(\alpha_1,\alpha_2)|$ is trivially $0$).
Let $e=\{X'_i,X'_j\}$ be a generic edge in $A(\alpha_1,\alpha_2)$. Let $C_i,C_j$ be the cells containing points $X'_i,X'_j$, respectively.
There are $n_0$ ways to pick $C_i$. Given $C_i$, let $L$ be the set of points in the torus $\Td$ that are at distance at least $\alpha_1 d\eps r$ and at most $(1-\alpha_2 d\eps) r$ from the center of $C_i$. Clearly, the center of $C_j$ (if such an edge $e$ is possible) must belong to $L$, so $C_j$ intersects $L$. There are at least
\[
\vol(L)/s^d = \vol(L) n_0 = \left( (1-\alpha_2 d\eps)^d - (\alpha_1 d\eps)^d \right) \theta r^d n_0
\]
cells intersecting $L$ and thus choices for $C_j$. Therefore, we have at least
\[
\frac12  \left( (1-\alpha_2 d\eps)^d - (\alpha_1 d\eps)^d \right) \theta r^d {n_0}^2
\]
choices for $\{C_i,C_j\}$, where the $1/2$ factor comes from the fact that we are counting unordered pairs of cells. We now must choose one point in $\bX'\cap C_i$ for $i=1,2$. This can be done in $N^2\ge (n/n_0)^2$ ways. Putting it all together,
\[
|A(\alpha_2,\alpha_2)| \ge \frac12 \left( (1-\alpha_2 d\eps)^d - (\alpha_1 d\eps)^d \right) \theta r^d n^2,
\]
which proves Claim~\ref{claim:Ageneral}.
\end{proof}

As a conclusion, we have found a matching in $\cH'$ of size at least
\begin{equation}
\label{eq:almostdone}
(1-\eps)f(\cH') \ge (1-(c+1)\eps) \theta r^d n^2/6.
\end{equation}
This matching in $\cH'$ corresponds to a triangle packing in $\cG'$ and thus in $\cG_{\Td}(\bX',r)$, but we would like a triangle packing in $\cG_{\Td}(\bX,r)$. We simply remove the points in $\bX' \setminus \bX$,  deleting any triangles from our packing that use those points. Since $|\bX' \setminus \bX|  \le 4\eps n $ by Lemma~\ref{lem:trimming} and in view of~\eqref{eq:degG}, we lose at most
\begin{equation}
\label{eq:edges_killed}
4\eps (1+\eps/2)\theta r^d n^2 \le 36\eps \theta r^d n^2 / 6
\end{equation}
edges and thus at most the same amount of triangles (where we also assumed $\eps<1$).
Then we add the points in $\bX \setminus \bX'$, which does not decrease our triangle packing.
So in view of Lemma~\ref{lem:edges} and choosing $\eps$ to be sufficiently small given $d$ and $\delta$,
we get a packing with at least $(1-\delta) m(\cG_\Td(\bX,r)) / 3$ triangles. This proves Theorem~\ref{thm:dense} for the case $d\ge 2$.
\end{proof}

\section{General $F$. Proof of Theorem~\ref{thm:unitdistance}}\label{sec:general}

\begin{proof}
This proof is a generalization of that of Theorem~\ref{thm:dense} (for $d\ge 2$) in Section~\ref{sec:triangles}. As a result, we will trace the argument in that section at a rather brisk pace and only elaborate at the specific points that require further detail.

Fix some $F \in \cU(d)$ for $d\ge1$. Assume $F$ has no isolated vertices (if $F$ has isolated vertices then we can pack the subgraph induced by non-isolated vertices). Since $F \in \cU(d)$, we have $F \subseteq F'$ where $F'$ is the blowup of a distance graph in $\real^d$. We can assume $V(F)=V(F')$ by taking $F'$ to be the blowup where we duplicate each vertex the smallest possible number of times.
We label the vertices of the distance graph $F''$ as $v_1,\ldots,v_{n_{F''}}$, where $n_{F''}$ denotes the number of vertices in $F''$. For each $1\le i\le n_{F''}$, let $v_{i,1},\ldots,v_{i,b_i}$ be the vertices in the blowup graph $F'$ (or in $F$) that correspond to vertex $v_i$ in $F''$. We call $b_i$ the {\em multiplicity} of $v_i$ in the blowup $F'$ (or in $F$). In other words, $b_i$ is the number of times $v_i$ is duplicated in the blowup. Define $b=\max\{b_i\mid 1\le i\le n_{F''}\}$ to be the largest multiplicity.
Consider a fixed arbitrary drawing $D$ of $F''$ in  $\mathbb{R}^d$ where every edge has Euclidean distance 1.
For each $1\le i\le n_{F''}$, let $Z_i\in\real^d$ denote the position of vertex $v_i$ in the drawing $D$.

Recall that an $F$-packing in a graph $G$ is a collection of edge-disjoint copies of $F$ in $G$.
In view of Lemma~\ref{lem:edges} and Remark~\ref{rem:torus} (but replacing triangles by copies of $F$), any $F$-packing in the toroidal model $\cG_\Td(\bX,r)$ of size at least $(1-\delta/2) m(\cG_{\Td}(\bX,r))/m_F$ can be turned into one in the cubic model $\cG_\Qd(\bX,r)$ of size at least 
\[
(1-\delta/2 - (2m_Fd+o(1))r )m(\cG_{\Td}(\bX,r))/m_F \ge (1-\delta)m(\cG_{\Qd}(\bX,r))/m_F
\]
if we pick $\kappa_3=\kappa_3(F,\delta,d)$ to be sufficiently small. As a result, it will suffice to prove Theorem~\ref{thm:unitdistance} for the toroidal case, that is $*=\Td$.

As in Section~\ref{sec:triangles}, we partition the torus into $n_0$ cubic cells of side length $s=\lceil 1/\eps r\rceil^{-1} \le  \eps r$ for some $\eps>0$ to be chosen sufficiently small later. $\cC$ is the graph of cells again, where two cells are adjacent if their centers have distance at most $(1-d\eps)r$. Recall that two points in adjacent cells must be within distance $r$.
We apply  Lemma~\ref{lem:trimming} to obtain our new set of points $\bX'$ with $N=\lceil n/n_0\rceil$ points in each cell by slightly modifying $\bX$. From~\eqref{eq:n0range} and~\eqref{eq:Nbounds} and for sufficiently large $\kappa_2=\kappa_2(F,d,\delta)$, we have
\begin{equation}\label{eq:Nb}
N \ge n/n_0 \ge (\eps \kappa_2/2)^d > b,
\end{equation}
where recall $b$ is the maximum multiplicity of all vertices of $F''$ in the blowup $F'$. Let $\cG'$ be the graph with vertex set $\bX'$ where two vertices are adjacent if they are in distinct cells which are adjacent in $\cC$. (In other words, $\cG'$ is the blowup of $\cC$ by factor $N$.) 
As in Section~\ref{sec:triangles}, let $\cH'= \cH'(\cG')$ be the hypergraph of (edge sets of) copies of $F$ in $\cG'$.
(That is, the vertices of $\cH'$ are the edges of $\cG'$ and the edges of $\cH'$ are the copies of $F$ in  in $\cG'$.)

We will assume that the a.a.s.\ conclusions of Lemmas~\ref{lem:edges} and~\ref{lem:trimming} hold, and build a large fractional matching $f$ for $\cH'$ deterministically.
To do so, we will consider a random procedure that attempts to generate a pair $(T,E)$, where $T$ is a copy of $F$ in $\cG'$ (thus an edge of $\cH'$) and $E$ an edge of $\cG'$ (thus a vertex of $\cH'$) contained in that copy.
This is a generalization of Random Procedure~I described in Section~\ref{sec:triangles}.
Technically, this experiment can fail, in which case it does not produce the desired copy of $F$ and/or edge.

\begin{RE}\label{REII}\hspace{0cm}
  \begin{enumerate}[(i)]
      \item\label{step:Dprime} We choose a random $R \in [0, r]$ distributed as the distance between two independent uniformly distributed points in the torus $\Td$, conditional on the distance being at most $r$. Equivalently, $R$ is distributed
as the distance from the origin in $\real^d$ to a uniform random point in the closed ball of radius $r$ centered at the origin. We shrink our drawing $D$ by a factor of $R$ to obtain a new drawing $D' \subseteq \mathbb{R}^d$ of the distance graph $F''$, in which all edges have Euclidean distance $R$.
In other words, for $1\le i\le n_{F''}$, the position $Z_i$ of vertex $v_i$ in our original drawing $D$ becomes $Z'_i=RZ_i$ in the new drawing $D'$.

      \item\label{step:Dpprime} We take a uniform random rotation of $D'$ to get a new drawing $D''$.
More precisely, we pick a random $d\times d$ orthogonal matrix $Q$, where each column $j=1,\ldots,d$ is chosen uniformly at random among all unit vectors that are orthogonal to the previous $j-1$ columns (so it is chosen from a $(d-j)$-dimensional sphere uniformly at random). Then, the new position of each vertex $v_i$ of $F''$ becomes $Z''_i=QZ'_i$ in drawing $D''$.

      \item\label{step:Dppprime} We take a uniform random translation mapping $D''$ into the torus $\Td$. More precisely, we pick a random uniform point $Z\in[0,1]^d$, translate each vertex of $F''$ in the drawing $D''$ by $Z$ and reduce all coordinates modulo $1$. Hence, we obtain a new drawing $D'''$ of $F''$ in the torus where each vertex $v_i$ in $F''$ ($1\le i\le n_{F''}$) is located at the point $Y_i=Z''_i+Z \quad (\mathrm{mod}\; 1)$ in $\Td$. For convenience, we define $Y_{i,j}=Y_j$ for all $j=1,\ldots,b_i$, so each vertex $v_{i,j}$ in $F$ (or in $F'$) has an associated location $Y_{i,j}$ in the torus as well.

\item\label{step:Xprime}
For each $1\le i\le n_{F''}$, let $C_i$ be the cell containing the random point $Y_i$. We choose a set of $b_i$ different points $X'_{i,1},\ldots,X'_{i,b_i}$ in $\bX'\cap C_i$ uniformly at random and independently from other choices. This is always possible, since $b_i\le b<N$ by~\eqref{eq:Nb}. Hence, to each vertex $v_{i,j}$ of $F$ we have associated a random point $X'_{i,j}$ in $\bX'\cap C_i$.

\item\label{step:E} We pick an edge $\{v_{i_1,j_1},v_{i_2,j_2}\}$ of $F$ uniformly at random and independently of all other choices. Recall $C_{i_1},C_{i_2}$ are the cells containing points $Y_{i_1,j_1},Y_{i_2,j_2}$ (or equivalently points $Y_{i_1},Y_{i_2}$), respectively. If $\{C_{i_1},C_{i_2}\}$ is an edge of the graph of cells $\cC$ (i.e.~the two cells are different and have their centers at distance at most $(1-d\eps)r$), then we output $E=\{X'_{i_1,j_1},X'_{i_2,j_2}\}$, with $X'_{i,j}$ defined in the previous step. By construction, $E$ is a random edge of $\cG'$ (and thus a vertex of $\cH'$). Otherwise, if $\{C_{i_1},C_{i_2}\}$ is not an edge of $\cC$, then we set $E=\emptyset$, and declare that our experiment {\em fails} to produce an edge of $\cG'$. This can only happen if $R$ is very small or very large (close to $r$).

\item\label{step:T} Now recall that $C_1,\ldots,C_{n_{F''}}$ are the cells containing points $Y_1,\ldots,Y_{n_{F''}}$.
Suppose first that $C_1,\ldots,C_{n_{F''}}$ (regarded as vertices in the graph of cells $\cC$) form a labelled copy of the distance graph $F''$ in $\cC$. That is, they are all different cells and, for each edge $\{v_i,v_j\}$ in $F''$, the centers of the corresponding cells $C_i,C_j$ are at distance at most $(1-d\eps)r$. This is a stronger condition than that in step~\eqref{step:E}, so in particular $E\ne\emptyset$.
In that case, the points $X'_{i,j}$ ($1\le i\le n_{F''}$, $1\le j\le b_i$) determine a labeled copy $T$ of $F$ in $\cG'$ (and thus an edge of $\cH'$). We output such $T$. Moreover, by construction, $E$ is an edge of $T$ chosen uniformly at random among those.
Otherwise, in the case that $C_1,\ldots,C_{n_{F''}}$ do not form a labelled copy of $F''$ in $\cC$, we simply set $T=\emptyset$ and declare that our experiment {\em fails} to produce a copy of $F$ in $\cG'$. Note that we may have $T=\emptyset$ but $E\ne\emptyset$ (if $\{C_{i_1},C_{i_2}\}$ is an edge of $\cC$ but $C_1,\ldots,C_{n_{F''}}$ do not form a labeled copy of $F''$ in $\cC$.)  
  \end{enumerate}
\end{RE}

\begin{remark}\label{rem:nowrap}
Our constant $\kappa_3$ will be chosen small enough that the distance between vertices in $D'''$ is unaffected when we reduce each coordinate modulo 1 to map it from $\mathbb{R}^d$ onto $\Td$. If we choose $\kappa_3 \le 1/2 \diam(D)$ then any pair of vertices in $D'$ have distance at most $1/2$, which suffices. In other words, our drawing $D'''$ of $F''$ does not wrap around the torus.
\end{remark}

Observe that the above procedure is a natural generalization of Random Experiment~\ref{REI} from Section~\ref{sec:triangles}. Indeed, for $F=F''=K_3$ (i.e.~a triangle) and $d\ge 2$, the distribution of $Y_1,Y_2,Y_3$ obtained in step~\eqref{step:Dppprime} coincides with that in step~\eqref{step:REI1} of Random Experiment~\ref{REI}. The only minor caveat is that here in step~\eqref{step:E} we randomize the choice of edge $\{v_{i_1,j_1},v_{i_2,j_2}\}$ whereas in Section~\ref{sec:triangles} we did not have to, due to the symmetric role of all edges in an equilateral triangle.
In fact, many of the key properties of Random Experiment~\ref{REI} naturally extend to Random Experiment~\ref{REII}.
\begin{enumerate}
\item
For any copy $t$ of $F$ in $\cG'$, conditional on $T=t$, the distribution of $E$ is uniform among all edges of $t$ (see step~\eqref{step:T}).
\item 
The location $Y_{i,j}$ of each vertex $v_{i,j}$ of $F$ (for $1\le i\le n_{F''}$ and $1\le j\le b_i$) is uniformly distributed on the torus. This follows from the construction of $D'''$ in step~\eqref{step:Dppprime} as a uniform random translation of $D''$.
\item
For any edge $\{v_{i_1,j_1},v_{i_2,j_2}\}$ of $F$ and conditional on $Y_{i_1,j_1}$, the distribution of $Y_{i_2,j_2}$ is uniform in a ball of radius $r$ centered at $Y_{i_1,j_1}$. This is still true if $\{v_{i_1,j_1},v_{i_2,j_2}\}$ is a random edge as in step~\eqref{step:E}. This follows from the distribution of the scaling factor $R$ in step~\eqref{step:Dprime} and the uniform random rotation in step~\eqref{step:Dpprime}.
\end{enumerate}
In view of these properties, the joint distribution of $T,E$ from Random Experiment~\ref{REII} satisfies this natural extension of Lemma~\ref{lem:ET}.

\begin{lem}\label{lem:ET2}
For any vertex $e$ in $\cH'$ and any edge $t$ in $\cH'$ that contains $e$,
\begin{enumerate}
\item
$\displaystyle \pr(E=e) = \frac{2s^{2d}}{\theta r^d N^2},$
\medskip
\item
$\displaystyle \pr(T=t,E=e) = \frac{1}{m_F} \pr(T=t)$,
\medskip
\item
$\displaystyle \pr(T=t \mid E=e) = \frac{\theta r^d N^2}{2m_Fs^{2d}} \pr(T=t)$.
\end{enumerate}
\end{lem}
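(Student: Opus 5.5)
We will mirror the proof of Lemma~\ref{lem:ET}, using the three properties of Random Experiment~\ref{REII} listed just before the statement.

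\textbf{Part~(2).} This is immediate from property~(1) and step~\eqref{step:T}. Conditional on $T=t\neq\emptyset$, the edge $E$ is the image of a uniformly random edge of $F$, chosen in step~\eqref{step:E} independently of all earlier choices. The labelled copy $t$ then determines a bijection between $E(F)$ and the edges of $t$. Some care is needed when $t$ is viewed as an unlabelled copy: several labellings may give the same edge set. But each labelling yields the uniform distribution over the edges of $t$, so the mixture over labellings is still uniform. Hence $\pr(T=t,E=e)=\pr(T=t)/m_F$.

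\textbf{Part~(1).} Write $e=\{\hat X_1,\hat X_2\}$ with $\hat X_k\in\hat C_k$, where $\hat C_1,\hat C_2$ are adjacent in $\cC$.
\begin{enumerate}[(a)]
\item Condition on the random edge $\{v_{i_1,j_1},v_{i_2,j_2}\}$ of $F$ chosen in step~\eqref{step:E}. It is independent of the geometric steps~\eqref{step:Dprime}--\eqref{step:Dppprime}, which are themselves symmetric.
\item By property~(2), $Y_{i_1,j_1}$ is uniform on $\Td$, so it lands in $\hat C_1$ with probability $s^d$.
\item By property~(3), conditional on $Y_{i_1,j_1}$, the point $Y_{i_2,j_2}$ is uniform in the ball of radius $r$ around it. This ball contains $\hat C_2$, since adjacent cells have all points within distance $r$, and no wrap-around occurs by Remark~\ref{rem:nowrap} and $r\le 1/2$. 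So $Y_{i_2,j_2}$ lands in $\hat C_2$ with probability $s^d/(\theta r^d)$.
\item Add the symmetric event with the roles of $\hat C_1,\hat C_2$ swapped, giving a total of $2s^{2d}/(\theta r^d)$.
\item Finally, we need the probability that the chosen points are exactly $\hat X_1$ at the first endpoint and $\hat X_2$ at the second. In step~\eqref{step:Xprime}, each $X'_{i,j}$ is marginally uniform on $\bX'\cap C_i$: the $b_i$ points are a uniform random ordered tuple of distinct points, so each coordinate is uniform. The two endpoints lie in different cells ($i_1\ne i_2$, since $\hat C_1\ne\hat C_2$), so their choices are independent. This gives probability $1/N^2$.
\end{enumerate}
Combining (d) and (e) gives $\pr(E=e)=2s^{2d}/(\theta r^dN^2)$.

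\textbf{Part~(3).} Divide part~(2) by part~(1).

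The main subtlety is part~(1). We must check that the randomness of the chosen edge of $F$ combines correctly with properties~(2)--(3), in particular that property~(3) holds for the random edge as well. This follows because $R$ has exactly the law of the distance of a uniform point in the $r$-ball and the rotation is uniform. We also need the marginal uniformity of the points in step~\eqref{step:Xprime} despite the distinctness constraint within a cell.
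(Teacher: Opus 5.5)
Your proposal is correct and follows essentially the same route as the paper, which obtains Lemma~\ref{lem:ET2} by transferring the proof of Lemma~\ref{lem:ET} verbatim. Part~(2) comes from uniformity of $E$ given $T$, part~(1) from the count $s^d\cdot s^d/(\theta r^d)\cdot 1/N^2$ doubled for the swapped order of cells, and part~(3) by division. Your extra remarks make explicit details the paper leaves implicit: the labelled versus unlabelled copies in part~(2), the independence of the chosen edge of $F$ from the geometric steps, and the marginal uniformity of each $X'_{i,j}$ in step~\eqref{step:Xprime}.
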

The proof follows nearly verbatim from that of~Lemma~\ref{lem:ET} (just replace triangle by copy of $F$, $3$ by $m_F$ and $Y_1,Y_2$ by $Y_{i_1,j_1},Y_{i_2,j_2}$) so we omit it.
As a result, we can define for each copy $t$ of $F$ in $\cG'$,
\[
f(t) = \pr(T=t\mid E=e) = \frac{\theta r^d N^2}{2m_Fs^{2d}} \pr(T=t),
\]
exactly as we did in~\eqref{eq:fdef}, where $e$ is an arbitrary edge contained in $t$. By~\eqref{eq:fracmatch}, $f$ is a fractional matching on $\cH'$.

Now we bound $\alpha(f)$. Given two different edges $e,e'$ of $\cG'$, let $v$ be a vertex of $\cG'$ that is in $e'$ but not in $e$ (there must be at least one of these). Then
\begin{equation}
\label{eq:PvinT}
\suma{t\in\cH'\\t\supset e,e'} f(t) = \suma{t\in\cH'\\t\supset e,e'} \pr(T=t\mid E=e) \le \pr(T\ni v\mid E=e).
\end{equation}
To find an upper bound of $\pr(T\ni v \mid E=e)$, we take for granted that all the random points $Y_{i,j}$ of Random Experiment~\ref{REII} land in the right cells, which includes the cell $C$ containing $v$. The probability that vertex $v$ is picked from $\bX'\cap C$ in step~\eqref{step:Xprime} of the experiment is at most $b/N$, where recall, where $b$ is the maximum of all $b_i$. In view of~\eqref{eq:PvinT}, we conclude that
\[
\alpha(f) = \max_{\substack{e,e'\in V(\cH')\\e\ne e'}} \suma{t\in\cH'\\t\supset e,e'} f(t) \le b/N.
\]
We would like to make $b/N< \sigma$, where $\sigma$ is the value guaranteed by Lemma~\ref{lem:Kahn} for $\ell=m_F$ and our value of $\eps$.
From~\eqref{eq:Nb}, $N\ge (\eps\kappa_2/2)^d$. So, assuming that $\kappa_2=\kappa_2(F,d,\delta)$ is sufficiently large, we can achieve $\alpha(f) \le b/N < \sigma$.
Hence Lemma~\ref{lem:Kahn} tells us that $\cH'$ has a matching of size at least $(1-\eps)f(\cH')$.

The following natural extension of Lemma~\ref{lem:fH}, which we will prove at the end of this section, gives a lower bound on $f(\cH')$.
\begin{lem}\label{lem:fH2}
There exists $c=c(F,d)>0$ such that, for any $\eps>0$ sufficiently small given $d$,
\[
f(\cH') \ge (1-c\eps) \frac{1}{2m_F}\theta r^d n^2.
\]
\end{lem}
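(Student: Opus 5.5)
We follow the proof of Lemma~\ref{lem:fH} essentially verbatim. By Lemma~\ref{lem:ET2}(2), each copy $t$ of $F$ is counted $m_F$ times when we sum over its edges. The same double counting as in~\eqref{eq:fHbound} then gives
\[
f(\cH') = \frac{1}{m_F}\sum_{e\in V(\cH')} \pr(T\ne\emptyset\mid E=e) \ge \frac{|A'|}{m_F},
\]
where $A'$ is any set of edges $e$ of $\cG'$ with $\pr(T\ne\emptyset\mid E=e)=1$. We will take $A'=A(\alpha_1,\alpha_2)$ from Claim~\ref{claim:Ageneral}, with $\alpha_1,\alpha_2$ constants depending only on $F$ (through the drawing $D$) and $d$. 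Claim~\ref{claim:Ageneral} gives
\[
|A'|\ge \tfrac12\bigl(1-\alpha_2d^2\eps-(\alpha_1d\eps)^d\bigr)\theta r^d n^2 .
\]
For $d\ge2$ the term $(\alpha_1 d\eps)^d$ is at most $\eps$ for small $\eps$. For $d=1$ it equals $\alpha_1\eps$, which is linear in $\eps$ and can simply be absorbed into $c$. So it remains to prove an analogue of Claim~\ref{claim:x}.

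\emph{Analogue of Claim~\ref{claim:x}.} Let $\mu>0$ be the minimum distance in $D$ between distinct points $Z_i,Z_j$; it is positive because the drawing is injective. Condition on $E=e$, so the cells of the chosen endpoints $Y_{i_1},Y_{i_2}$ are the cells $\hat C_1,\hat C_2$ of $e$, whose centers are at distance $x$. Since $\|Y_{i_1}-Y_{i_2}\|=R$ (by Remark~\ref{rem:nowrap}, for $\kappa_3$ small there is no wrap-around), we get $|R-x|\le d\eps r$. Every pair $Y_i,Y_j$ is at distance $R\|Z_i-Z_j\|$, which lies in $[\mu R,\diam(D)R]$, and equals exactly $R$ when $v_iv_j$ is an edge of $F''$.

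\emph{Choice of constants.} Take $\alpha_2=3$. Then for every edge of $F''$ the centers of the corresponding cells are at distance at most $R+d\eps r\le x+2d\eps r\le(1-d\eps)r$, so those cells are adjacent in $\cC$. Take $\alpha_1=3/\mu$ (at least $3$). Then for every pair $i\ne j$ the corresponding centers are at distance at least
\[
\mu R-d\eps r \ge \mu(x-d\eps r)-d\eps r \ge 3d\eps r-\mu d\eps r-d\eps r.
\]
This is too weak when $\mu$ is small, so we raise $\alpha_1$ to $(2+\mu)/\mu+1$, or simply to $3/\mu+3$, making the bound at least $d\eps r>0$. Since cell centers of distinct cells must be at distance at least $s$, and a positive distance between centers forces the cells to be distinct, all cells $C_1,\dots,C_{n_{F''}}$ are distinct. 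The edges of $F''$ are then mapped to edges of $\cC$, so $T\ne\emptyset$.

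\emph{Main obstacle.} The delicate point is the distinctness of the cells of non-adjacent vertices of $F''$, which may be much closer than $R$. This is handled by the injectivity constant $\mu$ and by choosing $\alpha_1$ of order $1/\mu$. It is also why $c$ depends on $F$. The bound $f(\cH')\ge (1-c\eps)\theta r^dn^2/(2m_F)$ then follows with $c=c(F,d)$, for example $c=3d^2+(\alpha_1 d)^d+1$.
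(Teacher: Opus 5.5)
Your proposal is correct and follows the paper's proof essentially step for step. Your $\mu$ is the paper's $\beta$. The paper uses $A(2/\beta+1,3)$; your requirement $\mu(x-d\eps r)-d\eps r\ge d\eps r$ is exactly $x\ge(2/\mu+1)d\eps r$, so your larger $\alpha_1=3/\mu+3$ also works. The rest is the same: Claim~\ref{claim:Ageneral} plus the double counting in~\eqref{eq:fHbound2}, and your explicit estimate $(\alpha_1 d\eps)^d\le(\alpha_1 d)^d\eps$ for $\eps\le1$ neatly keeps the smallness condition on $\eps$ depending only on $d$.
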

Hence (assuming Lemma~\ref{lem:fH2}), we have shown the existence of a matching in $\cH'$ of size at least
\[
(1-\eps)f(\cH') \ge (1-(c+1)\eps) \theta r^d n^2/2m_F,
\]
which corresponds to an $F$-packing in $\cG' \subseteq \cG_\Td(\bX',r)$. In order to turn it into an $F$-packing in $\cG_\Td(\bX,r)$, we follow the same argument at the end of Section~\ref{sec:triangles} nearly verbatim (replacing triangles by copies of $F$ and updating constants appropriately), so we just sketch it here. We remove the points in $\bX' \setminus \bX$, which deletes at most (cf.~\eqref{eq:edges_killed})
\[
4\eps (1+\eps/2)\theta r^d n^2 \le 6m_F\eps \theta r^d n^2 / m_F
\]
edges and (thus copies of $F$ from the packing), and then add the points in $\bX \setminus \bX'$, which does not decrease the $F$-packing. 
From Lemma~\ref{lem:edges} and choosing $\eps$ to be sufficiently small given $F$, $d$ and $\delta$,
we get a packing with at least $(1-\delta) m(\cG_\Td(\bX,r)) / m_F$ copies of $F$.
This proves Theorem~\ref{thm:unitdistance}.
\end{proof}

It only remains to prove Lemma~\ref{lem:fH2}. The argument is an easy extension of that of Lemma~\ref{lem:fH}, so we will just sketch the differences.
\begin{proof}[Proof of Lemma~\ref{lem:fH2}]
Let $\beta$ be the smallest Euclidean distance between different vertices in our initial drawing $D$ of $F''$ in $\real^d$. Note $0<\beta\le 1$, since different vertices must occupy different positions in $\real^d$ and adjacent vertices are at distance $1$.
Distances get scaled by a random factor of $R$ in $D'$ on step~\eqref{step:Dprime} of Random Experiment~\ref{REII}. They do not change on steps~\eqref{step:Dpprime} and~\eqref{step:Dppprime} even when projected to the torus, in view of Remark~\ref{rem:nowrap}.
As a result, the smallest toroidal distance between different vertices in the drawing $D'''$ of $F''$ is
\begin{equation}
\label{eq:lambdaR}
\min \{ \dist_\Td(Y_i,Y_j) : 1\le i,j\le n_{F''}, i\ne j \} = \beta R.
\end{equation}

Next, we introduce a claim that extends Claim~\ref{claim:x} in the proof of Lemma~\ref{lem:fH} to the new Random Experiment~\ref{REII} setting.
\newcounter{temp}
\setcounter{temp}{\value{claim}}
\setcounter{claim}{0}
\renewcommand{\theclaim}{\arabic{claim}$'$}%
\begin{claim}\label{claim:x2}
Suppose an edge $e$ of $\cG'$ has endpoints in the cells $\hat C_{i_1}, \hat C_{i_2}$. Let the distance between the centers of $\hat C_1$ and $\hat C_2$ be $x$, and suppose 
  \begin{equation}\label{eq:xbounds}
      (2/\beta+1)d \eps r \le x \le (1-3d\eps)r.
  \end{equation}
  Then  $\pr(T\ne\emptyset \mid E = e)  = 1$.
\end{claim}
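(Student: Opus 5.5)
We follow the proof of Claim~\ref{claim:x} and condition on $E=e$. Conditional on this event, the random edge chosen in step~\eqref{step:E} is some $\{v_{i_1,j_1},v_{i_2,j_2}\}$ of $F$ whose points $Y_{i_1},Y_{i_2}$ lie in $\hat C_{i_1},\hat C_{i_2}$. Since the drawing $D'''$ does not wrap around the torus (Remark~\ref{rem:nowrap}), $\dist_\Td(Y_{i_1},Y_{i_2})=R$. Each cell has side $s\le\eps r$, so its diameter is at most $\sqrt d\,\eps r\le d\eps r$, and each point lies within $d\eps r/2$ of the center of its cell. The triangle inequality then gives
\[
x-d\eps r\le R\le x+d\eps r .
\]
The hypothesis \eqref{eq:xbounds} therefore yields $R\ge (2/\beta)d\eps r$ and $R\le(1-2d\eps)r$.

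Next we check that $C_1,\ldots,C_{n_{F''}}$ form a labelled copy of $F''$ in $\cC$. For two distinct vertices $v_i,v_j$ of $F''$, \eqref{eq:lambdaR} gives $\dist_\Td(Y_i,Y_j)\ge\beta R\ge 2d\eps r$. The centers of $C_i$ and $C_j$ are then at distance at least $2d\eps r-d\eps r=d\eps r>0$, so the cells are distinct. For an edge $\{v_i,v_j\}$ of $F''$, the points satisfy $\dist_\Td(Y_i,Y_j)=R$, so the centers are at distance at most $R+d\eps r\le(1-d\eps)r$. Hence $C_i$ and $C_j$ are adjacent in $\cC$. Step~\eqref{step:T} therefore always outputs a copy $T\ne\emptyset$, which proves the claim.

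The main point of care is the distinctness of all cells, including those of non-adjacent vertices of $F''$. This is exactly why the lower bound uses $2/\beta$ instead of the constant $3$ from the triangle case. The constants are then routine: the factor $2/\beta$ absorbs the cell diameter on both sides, and the extra $+1$ absorbs the error in comparing $x$ with $R$. One must also bound the offset of a point from its cell's center by $\sqrt d\,s/2\le d\eps r/2$, consistent with the convention used in Claim~\ref{claim:x}.
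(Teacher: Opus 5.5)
Your proof is correct and follows the paper's argument essentially step for step. Like the paper, you bound $|R-x|\le d\eps r$ using the cell size, use the minimum-distance bound $\beta R$ to show all cells are distinct, and use $R+d\eps r\le(1-d\eps)r$ to get adjacency. The only difference is cosmetic: you first isolate $(2/\beta)d\eps r\le R\le(1-2d\eps)r$ and then apply it, whereas the paper chains the inequalities directly.
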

\setcounter{claim}{\value{temp}}
\renewcommand{\theclaim}{\arabic{claim}}

Now let $A=A(2/\beta+1, 3)$ be the set of edges of $\cG'$ which satisfy the assumptions of Claim~\ref{claim:x2}. By Claim~\ref{claim:Ageneral} in the proof of Lemma~\ref{lem:fH} (with $\alpha_1=2/\beta + 1, \alpha_2=3$), we get
\[
|A| \ge \frac12 \left( (1-(2/\beta + 1) d\eps)^d - (3 d\eps)^d \right) \theta r^d n^2.
\]
Setting $c=(2/\beta + 1) d^2 + 3 d$, which depends only on $d$ and $D$ (and thus on $F$), and assuming $\eps<1/3d$, we conclude
\begin{equation}
\label{eq:Ageneral}
|A| \ge \frac12 \left( (1-(2/\beta + 1) d^2\eps) - 3 d\eps \right) \theta r^d n^2 = \frac12 (1-c\eps) \theta r^d n^2,
\end{equation}
which plays a role akin to that of Claim~\ref{claim:A} in the proof of Lemma~\ref{lem:fH}.

In view of all the above, we can immediately derive an analogue to~\eqref{eq:fHbound} in this more general setting.
\begin{equation}
\label{eq:fHbound2}
f(\cH') = \sum_{t\in\cH'} f(t) = \frac{1}{m_F} \sum_{e \in V(\cH')} \suma{t\in\cH'\\ t\supset e} \pr(T=t\mid E=e)
= \frac{1}{m_F}  \sum_{e \in V(\cH')} \pr(T\ne\emptyset\mid E=e) \ge \frac{|A|}{m_F},
\end{equation}
where the $1/m_F$ factor accounts for the fact that copies of $F$ in the double sum are counted $m_F$ times each, and the last inequality follows from Claim~\ref{claim:x2} and restricting the last sum to only $e\in A$. The proof of the lemma follows immediately from~\eqref{eq:fHbound2} and~\eqref{eq:Ageneral}.

Finally, we proceed to prove Claim~\ref{claim:x2}.
Let $R$ be distributed as in step~\eqref{step:Dprime} of Random Experiment~\ref{REII}, and let $Y_i$ ($1\le i\le n_{F''}$) be the random points in $\Td$ generated in step~\eqref{step:Dppprime}.
Condition on $E=e$ for some edge $e$ of $\cG'$ satisfying the assumptions of the claim.
In particular, and using the fact that cells have side $s\le\eps r$, we must have
\[
x-d\eps r\le R \le x+d\eps r.
\]
For any pair of different vertices $v_i,v_j$ of $F''$, the centers of the cells $C_i,C_j$ containing $Y_i,Y_j$ are at distance at least
\[
\beta R-d\eps r \ge \beta (x-d\eps r)-d\eps r \ge  d\eps r,
\]
so cells $C_i,C_j$ must be different. Moreover, if $\{v_i,v_j\}$ is an edge of $F''$, the centers of the cells $C_i,C_j$ are at distance at most
\[
R+ds \le x+2ds \le x+2d\eps r \le (1-d\eps)r,
\]
so cells $C_i,C_j$ are adjacent. Hence step~\eqref{step:T} of Random Experiment~\ref{REII} does not fail and $T\ne\emptyset$. This finishes the proof of Claim~\ref{claim:x2}.
\end{proof}

\section{Proof of Theorem~\ref{thm:dense} (for $d=1$)}\label{sec:d1}
\begin{proof}
Most of the proof of Theorem~\ref{thm:dense} in Section~\ref{sec:triangles} also works unchanged for dimension $d=1$, so we just explain in detail the parts of the proof that need to be modified.

We adopt the same definitions and notation from that section, and follow the same construction of the graph of cells $\cC$. We also assume that the a.a.s.\ conclusions of Lemmas~\ref{lem:edges} and~\ref{lem:trimming} hold. This allows us to define $\bX'$, $\cG'$ and $\cH'$ as in Section~\ref{sec:triangles}. In view of Remark~\ref{rem:torus}, it suffices to prove the result for the random geometric graph $\cG_{\Tone}(\bX,r)$ on the torus, since the cubic case can be reduced to it. Note that the one-dimensional torus $\Tone$ is simply a circle of unit perimeter and the cells are intervals of length $s\le\eps r$, where $\eps>0$ will be chosen to be sufficiently small given $\delta$. As in the previous sections, we will sometimes use $\Tone$ and $[0,1]$ interchangeably by identifying points $0$ and $1$ as a single point in the torus (circle).

The main ingredient that needs to be modified is Random Experiment~\ref{REI}, since it generates equilateral triangles in $[0,1]^d$, and this is not possible in dimension $d=1$. We will replace this procedure by the following one, in which triangles have two shorter and one longer edge, but the average distribution of their lengths is uniform.

\begin{RE}\label{REIII}\hspace{0cm}
\begin{enumerate}[(i)]
\item\label{step:REIII1} Let $R_3=\max\{U_1,U_2,U_3\}$, where $U_1,U_2,U_3$ are independent uniform random variables on $[0,r]$. Let $R_1$ be uniformly distributed on $[0,R_3]$ and let $R_2=R_3-R_1$ (so $R_1$ and $R_2$ have the same distribution). Let $R$ be a random variable that, for $i=1,2,3$ takes value $R_i$ with probability $1/3$. We will show (see Lemma~\ref{lem:uniform} at the end of this section) that $R$ is uniformly distributed on $[0,r]$. So remarkably, while none of $R_1,R_2,R_3$ is uniform on $[0,r]$, choosing one of them at random is.
\item\label{step:REIII2}
Let $W_1$ be a random uniform point in the torus $\Tone$ (chosen independently from $R_1,R_2,R_3$). Define $W_2=W_1+R_1$ and $W_3=W_2+R_2=W_1+R_3$, all mod~$1$ since they are in $\Tone$ (see Figure~\ref{fig:dim1}). Now, for $i=1,2,3$, let $Y_i=W_{\sigma(i)}$, where $\sigma$ is a random permutation of the indices $1,2,3$. By construction, each vertex $Y_i$ of the triangle $\{Y_1,Y_2,Y_3\}$ is uniformly distributed on the torus. Moreover, each edge $\{Y_i,Y_j\}$ ($i\ne j$) has length distributed as $R$ and thus uniform on $[0,r]$.
\begin{figure}
\begin{tikzpicture}[scale=1.2]

\draw[thick] (0,0) ellipse (2.8 and 1.4);

\node[right] at (2.8,0.8) {$\Tone$};

\fill (-1.3,-1.25) circle (2pt);
\fill (0,-1.4) circle (2pt);
\fill (1.8,-1.08) circle (2pt);

\node[below] at (-1.3,-1.25) {$W_1$};
\node[below] at (0,-1.4) {$W_2$};
\node[below] at (1.85,-1.08) {$W_3$};

\draw[decorate,decoration={brace,mirror,amplitude=4pt}]
(-1.15,-1.35) -- (-0.15,-1.45)
node[midway,below=3pt] {$R_1$};

\draw[decorate,decoration={brace,mirror,amplitude=4pt}]
(0.15,-1.48) -- (1.6,-1.23)
node[midway,below=3pt] {$R_2$};

\draw[decorate,decoration={brace,amplitude=5pt}]
(-1.3,-1.1) -- (1.8,-0.95)
node[midway,above=3pt] {$R_3$};

\end{tikzpicture}
\caption{Random triangle $\{W_1,W_2,W_3\}=\{Y_1,Y_2,Y_3\}$ in $\Tone$ as on step~\eqref{step:REIII2} of Random Experiment~\ref{REIII}. }\label{fig:dim1}
\end{figure}

\item\label{step:REIII3}
Generate a random triangle $T$ of $\cG'$ (thus an edge of $\cH'$) and a random edge $E$ of $\cG'$ (thus a vertex of $\cH'$) contained in $T$, exactly as on step~\eqref{step:REI2} of Random Experiment~\ref{REI}. More precisely, $E=\{X'_1,X'_2\}$ and $T=\{X'_1,X'_2,X'_3\}$ where, for $i=1,2,3$, $C_i$ is the cell of $\cC$ that contains $Y_i$ and $X'_i$ is a random uniform point in $\bX'\cap C_i$. This definition of $E$ is only valid when $\{C_1,C_2\}$ is an edge of $\cC$. Otherwise $E=\emptyset$. Similarly, the definition of $T$ assumes $\{C_1,C_2,C_3\}$ is a triangle of $\cC$. Otherwise $T=\emptyset$.
\end{enumerate}
\end{RE}

This procedure has the same key properties as Random Experiment~\ref{REI} even though the triangles $\{Y_1,Y_2,Y_3\}$ are not equilateral. Specifically, each $Y_i$ is uniform on $\Tone$, the length of each edge $\{Y_i,Y_j\}$ is uniformly distributed on $[0,r]$ and, conditional on $T=t$ for some triangle $t$ of $\cG'$, $E$ chooses each one of the three edges of $t$ with probability $1/3$. As a result, Lemma~\ref{lem:ET} holds with the same proof (word for word) as in Section~\ref{sec:triangles}. As a result, we can define the same fractional matching $f$ as in equation~\eqref{eq:fdef} of that section, and then~\eqref{eq:fracmatch}, \eqref{eq:ftbound}, \eqref{eq:alphabound} work without changes.
Summarizing, for each triangle $t$ of $\cG'$, our fractional matching is defined by (cf.~\eqref{eq:fdef})
\begin{equation}
\label{eq:fdef3}
f(t) = \pr(T=t\mid E=e) = \frac{\theta r^d N^2}{6s^{2d}} \pr(T=t),
\end{equation}
where $e$ is any edge of $\cG'$ contained in $t$, and (for $\kappa_2$ sufficiently large given $\eps$) we can ensure the existence of a matching in $\cH'$ of size at least $(1-\eps)f(\cH')$.

The second ingredient that we need to modify from Section~\ref{sec:triangles} is the lower bound of $f(\cH')$. We will prove that Lemma~\ref{lem:fH} still works for the $d=1$ case with a slightly different constant. That is,
for $c=8$ and sufficiently small $\epsilon>0$,
\begin{equation}
\label{eq:fH3}
f(\cH') \ge (1-c\eps) \frac{1}{6}\theta r^d n^2.
\end{equation}
We will restate~\eqref{eq:fH3} as Lemma~\ref{lem:fH3} and prove it at the end of the section. 
As a conclusion, we have found a matching in $\cH'$ of size at least
\[
(1-\eps)f(\cH') \ge (1-(c+1)\eps) \theta r^d n^2/6,
\]
exactly as in~\eqref{eq:almostdone}. At this point, the proof of the theorem for $d=1$ proceeds identically to that in Section~\ref{sec:triangles} for the $d\ge2$ case. This finishes the proof of Theorem~\ref{thm:dense} for $d=1$, assuming the conclusions of Lemmas~\ref{lem:uniform} and~\ref{lem:fH3} which we will prove below.
\end{proof}

For a continuous random variable $X$, let $F_X,f_X$ denote its {\em cumulative distribution} and {\em probability density} functions, respectively (shortened to c.d.f.\ and~p.d.f.), which satisfy, for any $x\in\real$,
\[
\pr(X\le x) = F_X(x) = \int_{-\infty}^x f_X(t)~dt.
\]
\begin{lem}
\label{lem:uniform}
For $R_1,R_2,R_3,R$ defined as in step~\eqref{step:REIII1} of Random Experiment~\ref{REIII} and for any $0\le x\le 1$,
\[
F_{R_1}(rx)=F_{R_2}(rx)=\frac{3x-x^3}{2},
\qquad
F_{R_3}(rx) = x^3
\qquad\text{and}\qquad
F_R(rx) = x.
\]
In particular, $R$ is uniformly distributed on $[0,r]$.
\end{lem}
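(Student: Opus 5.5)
By scaling, it suffices to treat $r=1$. Write $U_i=rV_i$ with $V_i$ i.i.d.\ uniform on $[0,1]$. Then every $R_i$ is $r$ times the corresponding quantity built from the $V_i$, and each claimed identity is a statement about $F_{R_i}(rx)$ for $x\in[0,1]$. So we take $r=1$ and prove $F_{R_3}(x)=x^3$, $F_{R_1}(x)=F_{R_2}(x)=(3x-x^3)/2$ and $F_R(x)=x$ on $[0,1]$.

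\emph{The maximum $R_3$.} By independence,
\[
F_{R_3}(x)=\pr(V_1\le x,V_2\le x,V_3\le x)=x^3,
\]
so $f_{R_3}(y)=3y^2$ on $[0,1]$.

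\emph{The variables $R_1$ and $R_2$.} Conditional on $R_3=y$, $R_1$ is uniform on $[0,y]$. Hence $\pr(R_1\le x\mid R_3=y)=\min(x/y,1)$. Integrating against the density of $R_3$ gives
\[
F_{R_1}(x)=\int_0^x 3y^2\,dy+\int_x^1 \frac{x}{y}\,3y^2\,dy = x^3+\frac{3x(1-x^2)}{2}=\frac{3x-x^3}{2}.
\]
Conditional on $R_3=y$, the variable $R_2=y-R_1$ is again uniform on $[0,y]$. So $R_2$ has the same conditional law as $R_1$, and therefore $F_{R_2}=F_{R_1}$.

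\emph{The mixture $R$.} Since $R$ equals each $R_i$ with probability $1/3$, independently of their values, its c.d.f.\ is the average of the three:
\[
F_R(x)=\frac13\Bigl(2\cdot\frac{3x-x^3}{2}+x^3\Bigr)=\frac13\cdot 3x=x.
\]
Thus $R$ is uniform on $[0,r]$.

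The only step needing care is the conditioning on $R_3$ in the computation of $F_{R_1}$, specifically splitting the integral at $y=x$. The rest is routine.
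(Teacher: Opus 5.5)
Your proof is correct and follows essentially the paper's route: you take $F_{R_3}(rx)=x^3$, condition on $R_3$ to obtain the law of $R_1$, note that $R_2$ has the same law, and average the three distributions to get $F_R(rx)=x$. The only differences are cosmetic. The paper computes the density $f_{R_1}(rx)=\frac32(1-x^2)$ and averages densities, whereas you integrate the conditional c.d.f.\ $\min(x/y,1)$ directly and average c.d.f.s. You also give an explicit reason why $R_1$ and $R_2$ are identically distributed, where the paper just cites symmetry.
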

\begin{proof}
Let $x$ always denote an arbitrary real in $[0,1]$.
From the definition of $R_3$ as the maximum of three independent uniform random variables $U_1,U_2,U_3$ on $[0,r]$,
\[
F_{R_3}(rx) = \pr(U_1,U_2,U_3 \le rx) = x^3
\qquad\text{and}\qquad
f_{R_3}(rx) = \frac{d}{dx} F_{R_3}(rx) = 3x^2.
\]
From the definition of $R_1$,
\[
f_{R_1}(rx) = \int_x^1  \frac{1}{t}   f_{R_3}(rt)~dt  = \int_x^1 3t~dt = \frac{3}{2}(1-x^2)
\]
and thus
\[
F_{R_1}(rx) = \int_0^x f_{R_1}(rt)~dt = \frac{1}{2}(3x-x^3).
\]
By symmetry $R_1,R_2$ are identically distributed, so $F_{R_2}(rx)=F_{R_1}(rx)$ and $f_{R_2}(rx)=f_{R_1}(rx)$.
Finally,
\[
f_{R}(rx) = \frac{1}{3}(f_{R_1}(rx) + f_{R_2}(rx) + f_{R_3}(rx)) = 1,
\]
so
\[
F_{R}(rx) = \int_0^x f_{R}(rt)~dt = x,
\]
as desired. This concludes the proof.
\end{proof}

Finally we prove the upper bound announced in~\eqref{eq:fH3}, akin to Lemmas~\ref{lem:fH} and~\ref{lem:fH2} in earlier sections.
\begin{lem}
\label{lem:fH3}
For $c=8$ and sufficiently small $\epsilon>0$,
\[
f(\cH') \ge (1-c\eps) \frac{1}{6}\theta r^d n^2.
\]
\end{lem}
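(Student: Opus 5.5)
We follow the template of the proof of Lemma~\ref{lem:fH}. The identity~\eqref{eq:fHbound} holds verbatim, since it only uses that $E$ is uniform among the three edges of $T$ given $T=t$. So
\[
f(\cH')=\frac13\sum_{e\in V(\cH')}\pr(T\ne\emptyset\mid E=e),
\]
and the task is to lower bound $\pr(T\neq\emptyset\mid E=e)$ for most edges $e$. Here $\theta=2$ and $d=1$. The difference from $d\ge2$ is that, conditional on $E=e$, the third point is no longer forced into a safe position. Its location depends on which of $R_1,R_2,R_3$ the edge $\{Y_1,Y_2\}$ realized, and on the remaining random length. So we cannot expect $\pr(T\ne\emptyset\mid E=e)=1$. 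Instead we will bound the failure probability averaged over $e$.

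The cleaner way is to compute directly: $\sum_e \pr(T\neq\emptyset\mid E=e)=\sum_e \pr(T\ne\emptyset,E=e)/\pr(E=e)$. By Lemma~\ref{lem:ET}(1), $\pr(E=e)=s^2/(rN^2)$ is the same for all $e$. Hence the sum equals $(rN^2/s^2)\,\pr(T\ne\emptyset)$. Since $n_0=1/s$ and $N\ge n/n_0$, we get $rN^2/s^2\ge rn^2$. Therefore
\[
f(\cH')\ge \tfrac13 r n^2\,\pr(T\neq\emptyset),
\]
and it suffices to show $\pr(T\ne\emptyset)\ge 1-8\eps$.

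We then bound the failure probability geometrically. $T=\emptyset$ only if, for some pair $i\ne j$, the cells $C_i,C_j$ coincide or have centers at distance more than $(1-\eps)r$. Centers differ from the points by at most $s/2$ each, so center distance is within $s\le \eps r$ of $|Y_i-Y_j|$. Note there is no wrap-around, because $R_3\le r\le 1/2$ and the three points lie on an arc of length $R_3$. Failure for the pair therefore requires $|Y_i-Y_j|<\eps r$ (needed for equal cells, since equal cells force distance $<s$) or $|Y_i-Y_j|>(1-2\eps)r$.

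The three pairwise distances are $R_1,R_2,R_3$. We use Lemma~\ref{lem:uniform} and a union bound:
\[
\pr(R_1<\eps r)=\pr(R_2<\eps r)=\tfrac{3\eps-\eps^3}{2}\le \tfrac32\eps.
\]
For the long side, $\pr(R_3>(1-2\eps)r)=1-(1-2\eps)^3\le 6\eps$. For the short sides,
\[
\pr(R_1>(1-2\eps)r)\le 1-F_{R_1}((1-2\eps)r)=O(\eps^2),
\]
since $f_{R_1}$ vanishes at $x=1$. Finally, $R_3<\eps r$ implies $R_1<\eps r$, so it adds nothing. Summing, the failure probability is at most $3\eps+6\eps+O(\eps^2)$. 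This slightly exceeds $8\eps$, so the constants need care.

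This constant bookkeeping is the main obstacle. To tighten, we will use sharper cell-based conditions. Two points in the same cell have distance below $s$, where $s\le\eps r$. A center distance above $(1-\eps)r$ requires the point distance to exceed $(1-\eps)r-s$. We also exploit overlaps between the failure events. If needed, we will simply relax to $c=9$ or absorb the $O(\eps^2)$ term, since only some constant $c$ matters for the proof of Theorem~\ref{thm:dense}. The structural point is that the cell partition is an exact tiling and $\pr(E=e)$ is constant, which turns the sum over $e$ into a single probability.
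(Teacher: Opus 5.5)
Your reduction to $f(\cH')\ge\frac16\theta r n^2\,\pr(T\ne\emptyset)$ is correct and is essentially the paper's. The paper sums \eqref{eq:fdef3} over $t$; your route through \eqref{eq:fHbound} and Lemma~\ref{lem:ET}(1) is equivalent. So everything hinges on showing $\pr(T=\emptyset)\le 8\eps$, and that is where the proposal falls short.

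Your union bound evaluates exactly to $(3\eps-\eps^3)+\bigl(1-(1-2\eps)^3\bigr)+2(6\eps^2-4\eps^3)=9\eps-\eps^3$. That proves the lemma with $c=9$, not $c=8$. Neither tightening you sketch closes the gap:
\begin{itemize}
\item the pairwise intersections of $\{R_1<\eps r\}$, $\{R_2<\eps r\}$ and $\{R_3>(1-2\eps)r\}$ have probability only $O(\eps^2)$;
\item the ``sharper cell-based conditions'' you list are exactly the ones you already used.
\end{itemize}
In fact no condition on $R_1,R_2,R_3$ alone can do better in general. When $s$ is close to $\eps r$ and the fractional part of $(1-\eps)r/s$ is close to $1$, a pair at distance just below $s$ can share a cell. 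Likewise, a pair at distance just above $(1-\eps)r-s$ can lie in non-adjacent cells. Relaxing to $c=9$ is harmless for Theorem~\ref{thm:dense}, but it changes the statement.

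The missing idea is to average over the position of $W_1$ inside its cell. That offset is uniform on $[0,s)$ and independent of $(R_1,R_2,R_3)$; likewise the offset of $W_2$ in its cell is uniform and independent of $R_2$. By Lemma~\ref{lem:uniform}, the densities of $R_1,R_2$ are at most $3/(2r)$ and that of $R_3$ is at most $3/r$.
\begin{itemize}
\item Two points at distance $\rho<s$ share a cell with probability $1-\rho/s$. So each of the two same-cell failures has probability at most $\frac{3}{2r}\cdot\frac s2\le\frac34\eps$.
\item Set $K=\lfloor(1-\eps)r/s\rfloor$. Given $R_3$, the cells of $W_1$ and $W_3$ are more than $K$ cells apart with probability $\min\{1,(R_3/s-K)^+\}$. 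Since $(K+1)s>(1-\eps)r$, this failure has probability at most $\frac3r\bigl(\frac s2+\eps r\bigr)\le\frac92\eps$.
\end{itemize}
Cell indices are monotone along the arc from $W_1$ through $W_2$ to $W_3$, so every other failure is contained in one of these three events. This gives $\pr(T=\emptyset)\le 6\eps$, which is more than enough for $c=8$.

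For comparison, the paper reaches $8\eps$ by taking $\eps r\le R_i\le(1-\eps)r$ as its sufficient condition. That condition overlooks the shift of up to $s$ between the distance of two points and the distance of their cell centres, which you correctly accounted for. So its $2\eps+2\eps+4\eps$ bookkeeping cannot simply be borrowed; with your threshold the same computation also gives about $9\eps$.
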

\begin{proof}
Let $T,E$ be generated as in Random Experiment~\ref{REIII}.
The strategy we followed in Lemmas~\ref{lem:fH} and~\ref{lem:fH2} was to show that for most edges $e$ in $\cG'$, $\pr(T\ne\emptyset\mid E=e)=1$. This is not true here. Instead, we will show that, for all edges $e$, $\pr(T\ne\emptyset\mid E=e)$ is close to $1$.

Let $R_1,R_2,R_3$ and $W_1,W_2,W_3$ be as in step~\eqref{step:REIII2} of Random Experiment~\ref{REIII}. Note that, if
\begin{equation}
\label{eq:Ribounds}
\eps r\le R_i \le (1-\eps) r
\qquad
\forall i=1,2,3,
\end{equation}
then the cells containing $W_1,W_2,W_3$ must be all different and pairwise adjacent in $\cC$, so $T\ne\emptyset$.
From Lemma~\ref{lem:uniform},
the probability that~\eqref{eq:Ribounds} fails due to $R_1$  (or symmetrically due to $R_2$) is at most
\begin{align*}
F_{R_1}(\eps r) + 1 - F_{R_1}((1-\eps) r) &= \frac{1}{2}(3\eps-\eps^3) + 1 - \frac{1}{2}(3(1-\eps)-(1-\eps)^3)
\\
&\le  \frac{1}{2}( 3\eps  + 3\eps^2 ) < 2\eps,
\end{align*}
where in the last inequality we assumed $\eps<1/3$.
Similarly, the probability that~\eqref{eq:Ribounds} fails due to $R_3$ is at most
\[
F_{R_3}(\eps r) + 1 - F_{R_3}((1-\eps) r) = \eps^3 + 1 - (1-\eps)^3 \le 3\eps + \eps^3 < 4\eps.
\]
As a result,
\[
\pr(T\ne\emptyset) \ge 1 - 2\eps - 2\eps - 4\eps = 1 - 8\eps.
\]
Finally, from~\eqref{eq:fdef3} and since $N^2/s^{2d}=N^2{n_0}^2\ge n^2$,
\[
f(\cH') = \sum_{t\in\cH'} f(t) = \sum_{t\in\cH'} \frac{\theta r^d N^2}{6s^{2d}} \pr(T=t) \ge \frac{\theta r^d n^2}{6} \pr(T\ne\emptyset) \ge  (1 - 8\eps)\frac{\theta r^d n^2}{6},
\]
as claimed.
\end{proof}

\section{Negative results. Proof of Theorem~\ref{thm:neg}}\label{sec:neg}

\begin{proof}
Our goal is to show that, for large enough $t$, a.a.s.\ we cannot find a $K_t$-packing in the random geometric graph $\cG_*(\bX,r)$ that uses more than roughly a $\rho$ fraction of the edges.

Throughout the proof, we will assume that $\kappa_3 \ge r=\omega(n^{-2/d})$ (so that we can apply Lemma~\ref{lem:edges}). For $r$ below that range, a.a.s.\ the random graph $\cG_*(\bX,r)$ has only a bounded number of edges and they induce a matching, so $\nu_t(\cG_*(\bX,r))=0$.

In the same spirit of Remark~\ref{rem:torus}, we first show that the statement for the toroidal version of the model ($*=\Td$) implies the statement for the cubic version ($*=\Qd$).
Indeed, suppose that the largest $K_t$-packing in $\cG_\Td(\bX,r)$ has size at most 
\[
(1+\delta/2) \frac{m(\cG_\Td(\bX,r))}{2\binom{t}{2}}.
\]
From Lemma~\ref{lem:edges}, we can assume that $m(\cG_\Qd(\bX,r)) \ge \frac{1+\delta/2}{1+\delta} m(\cG_\Td(\bX,r))$ by taking $\kappa_3$ to be sufficiently small given $d,\delta$. Hence, the largest $K_t$-packing in $\cG_\Td(\bX,r)$ has size at most 
\[
(1+\delta/2) \frac{m(\cG_\Td(\bX,r))}{2\binom{t}{2}} \le (1+\delta) \frac{m(\cG_\Qd(\bX,r))}{2\binom{t}{2}},
\]
so we obtain the conclusion of Theorem~\ref{thm:neg} for $*=\Qd$. In view of that, we will restrict our attention to the toroidal case $*=\Td$ for the remaining of the proof and in particular take $\kappa_3=1/2$ to guarantee that the edges of $\cG_\Td(\bX,r)$ do not wrap around the torus.

Let $0<\delta<1$, pick $\eps>0$ sufficiently small given $d,\delta$, and suppose $t_0$ is sufficiently large given $d,\delta,\eps$.
We partition the torus $[0,1]^d$ into $n_0$ cubic cells of side length $s=\lceil1/\eps r\rceil^{-1}$, so in particular $\eps r/(1+\eps r) \le s\le \eps r$ and $n_0=s^{-d}$ is an integer.
For later use, since $r\le\kappa_3=1/2$ and by picking $\eps$ small enough given $d,\delta$,
\begin{equation}
\label{eq:sbound}
s^d\ge (1-\eps/2)^d (\eps r)^d \ge \frac{(\eps r)^d}{1+\delta/10}.
\end{equation}
Now, an edge $X_iX_j$ of $\cG_\Td(\bX,r)$ is called {\em short} if both endpoints $X_i$ and $X_j$ belong to the same cell. 
\begin{claim}\label{clm:2}
A.a.s.\ at most a $(1+\delta/3)\eps^d/\theta$ fraction of the edges of $\cG_\Td(\bX,r)$ are short.
\end{claim}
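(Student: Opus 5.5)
Let $S$ be the number of short edges of $\cG_\Td(\bX,r)$. Since $s\le\eps r\le r/\sqrt d$ once $\eps<1/\sqrt d$ (and $d\ge1$; if needed shrink $\eps$ given $d$), any two points in the same cell are within distance $\sqrt d\, s\le r$, so $S$ is simply the number of pairs $\{i,j\}$ with $X_i,X_j$ in the same cell. The plan is to compute $\ex S$, show concentration by a second moment argument as in Lemma~\ref{lem:edges}, and compare with the a.a.s.\ edge count $(\theta/2+o(1))n^2r^d$ from Lemma~\ref{lem:edges}.

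For the first moment: a fixed pair $X_i,X_j$ lies in a common cell with probability exactly $n_0\cdot s^{2d}=s^d$, so $\ex S=\binom n2 s^d\le \frac12 n^2(\eps r)^d$. Dividing by the edge count gives a ratio $(1+o(1))\eps^d/\theta$ (using $s\le\eps r$), which is comfortably below $(1+\delta/3)\eps^d/\theta$. (The lower bound \eqref{eq:sbound} is not needed here; it will be used later for a matching lower bound.) For the second moment, split $\ex S^2$ according to whether the two pairs share two, one, or zero indices: these contribute $O(n^2s^d)$, $O(n^3 s^{2d})$ (probability all three points in one cell is $s^{2d}$), and at most $\binom n2\binom{n-2}2 s^{2d}$ (disjoint pairs are independent). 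Hence $\ex S^2\le(\ex S)^2+O(n^2s^d+n^3s^{2d})$, and Chebyshev gives $S=(1+o(1))\ex S$ provided $\ex S\to\infty$ and $n^3s^{2d}=o(n^4s^{2d})$, the latter being automatic.

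The only obstacle is the regime where $\ex S=\Theta(n^2s^d)$ does not tend to infinity, i.e.\ when $r$ is close to $n^{-2/d}$; we are assuming $r=\omega(n^{-2/d})$, so $n^2s^d\ge n^2(\eps r)^d/2^d\to\infty$ for fixed $\eps$, and concentration holds throughout. Even simpler, only an upper bound is needed, so Markov-type reasoning is insufficient but Chebyshev suffices: a.a.s.\ $S\le(1+\delta/10)\ex S$, while a.a.s.\ $m(\cG_\Td(\bX,r))\ge(1-\delta/10)\binom n2\theta r^d$; combining, $S/m\le\frac{1+\delta/10}{1-\delta/10}\eps^d/\theta\le(1+\delta/3)\eps^d/\theta$ for $\delta<1$, proving Claim~\ref{clm:2}.
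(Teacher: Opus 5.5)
Your proposal is correct and follows essentially the same argument as the paper: you compute $\ex S=\binom n2 s^d\le \eps^d\binom n2 r^d$, run the same second-moment computation as in Lemma~\ref{lem:edges} (pairs of pairs sharing two, one, or zero indices), and compare with the a.a.s.\ edge count from that lemma. You also spell out two points the paper leaves implicit: that a cell has diameter $\sqrt d\,s\le r$, so same-cell pairs really are edges, and that $\ex S\to\infty$ because $r=\omega(n^{-2/d})$.
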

The proof of the claim follows from a simple second moment method akin to that of Lemma~\ref{lem:edges}. Indeed, let $\hat Y$ be the number of short edges. Then,
\[
\ex \hat Y = \binom{n}{2} s^d \le \eps^d \binom{n}{2} r^d \sim \frac{\eps^d}{\theta} m,
\]
where in the last step we use the asymptotic estimate of $m$ from Lemma~\ref{lem:edges}.
Moreover, by analogy with~\eqref{eq:Y2moment},
\[
\ex \hat Y^2 = O(n^2s^d + n^3s^{2d}) + \binom{n}{2}\binom{n-2}{2} s^{2d} \sim (\ex \hat Y)^2.
\]
Hence, by the second moment method, a.a.s.\ $\hat Y \le (1+\delta/3) \frac{\eps^d}{\theta} m$.
This proves Claim~\ref{clm:2}.

Let $\theta=\theta_d = \frac{\pi^{d/2}}{\Gamma(d/2+1)}$ be the volume of a unit ball in $\real^d$ (with $\theta_0=1$ by convention), and let $\rho=\rho_d$ be the volume of the intersection of two unit balls with centers at distance $1$ divided by $\theta$.

\begin{claim}
\label{claim:rho}
$\rho=\rho_d \le \min\{ 1/2, \sqrt{d} (3/4)^{d/2}\}$ for all $d\ge1$.
\end{claim}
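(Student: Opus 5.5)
We need to bound $\rho_d=\vol(B(0,1)\cap B(e_1,1))/\theta_d$ by $1/2$ and by $\sqrt d(3/4)^{d/2}$. The plan is to handle the two bounds separately, working with the lens $L=B(0,1)\cap B(e_1,1)$, where $e_1$ is the first standard basis vector.

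\emph{Bound $\rho\le 1/2$.} The lens $L$ is symmetric under the reflection $x_1\mapsto 1-x_1$, so it splits into two congruent halves $L\cap\{x_1\ge 1/2\}$ and $L\cap\{x_1\le 1/2\}$. The half $L\cap\{x_1\ge 1/2\}$ lies in $B(0,1)\cap\{x_1\ge 1/2\}$, which is a cap strictly inside the half-ball $B(0,1)\cap\{x_1\ge 0\}$. Hence $\vol(L)\le 2\vol(\text{cap})\le 2\cdot\theta/2$. This only gives $\rho\le 1$, so I will instead compare the two halves of $B(0,1)$ directly. We have $L\subseteq B(0,1)\cap\{x_1\ge 0\}$: indeed, if $x_1<0$ then $\|x-e_1\|^2=\|x\|^2-2x_1+1>1$. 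Therefore $\vol(L)\le\theta/2$, i.e.\ $\rho\le1/2$. (For $d=1$ this is just $\rho_1=1/2$.)

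\emph{Bound $\rho\le\sqrt d(3/4)^{d/2}$.} Every point $x\in L$ satisfies $\|x\|^2\le1$ and $\|x-e_1\|^2\le 1$. Adding these gives $2\|x-e_1/2\|^2+1/2\le 2$, so $\|x-e_1/2\|\le\sqrt{3}/2$. Thus $L$ lies in a ball of radius $\sqrt3/2$, and $\vol(L)\le(3/4)^{d/2}\theta$. This gives $\rho\le(3/4)^{d/2}\le\sqrt d(3/4)^{d/2}$ immediately, since $d\ge1$.

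So the claimed bound follows with room to spare. The only step needing care is the half-space containment in the first bound, which is the one-line computation above; I do not expect a real obstacle. The factor $\sqrt d$ appears to be slack from a slice-integration approach: writing $\vol(L)=2\int_{1/2}^1\theta_{d-1}(1-u^2)^{(d-1)/2}\,du\le\theta_{d-1}(3/4)^{(d-1)/2}$ and using $\theta_{d-1}/\theta_d=O(\sqrt d)$. That route is unnecessary given the ball-containment argument.
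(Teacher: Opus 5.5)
Your proof is correct. The bound $\rho_d\le 1/2$ is the paper's argument: the lens sits in the half of one ball facing the other center. Your computation $\|x-e_1\|^2=\|x\|^2-2x_1+1>1$ for $x_1<0$ makes this explicit. The opening detour through the cap $B(0,1)\cap\{x_1\ge 1/2\}$ proves nothing and can simply be dropped.

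For the second bound you take a genuinely different route. The paper cuts the lens by the bisecting hyperplane $x_1=1/2$ and writes each half as $\int_{1/2}^1\theta_{d-1}(1-t^2)^{(d-1)/2}\,dt$. It bounds the integrand by its value at $t=1/2$, and then needs Kershaw's Gamma-function inequality to get $\theta_{d-1}/\theta_d\le\sqrt{3d/4}$. That ratio bound is exactly where the factor $\sqrt d$ enters.

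Your parallelogram identity $\|x\|^2+\|x-e_1\|^2=2\|x-e_1/2\|^2+1/2$ instead puts the entire lens inside the ball of radius $\sqrt3/2$ about the midpoint of the centers. This avoids both the integral and any estimate on $\Gamma$, and gives the strictly stronger $\rho_d\le(3/4)^{d/2}$.

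In fact $\rho_d=\Theta(d^{-1/2}(3/4)^{d/2})$. So the paper's bound is off by a factor of order $d$, and yours by a factor of order $\sqrt d$. The paper's slicing formula is exact, so it could be refined to the true order by integrating the decay near $t=1/2$ rather than bounding it crudely. Ball containment, by contrast, can never do better than $(3/4)^{d/2}$.

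Nothing else in the paper uses the polynomial prefactor. For the purposes of Theorem~\ref{thm:neg}, your shorter, self-contained argument therefore serves at least as well.
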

The claim is trivial for $d=1$, since $\rho_1=1/2$. To prove it for $d\ge2$, let $S$ be the intersection of two unit balls $B_1,B_2$ with centers $X_1,X_2$ at distance $1$, so in particular $S$ has volume $\rho\theta$.
Clearly, $S$ is contained in the one half of $B_1$ that is closer to $X_2$, so we get the first bound $\rho \le 1/2$.
W.l.o.g.\ assume $X_1=(0,0,\ldots,0)$ and $X_2=(1,0,\ldots,0)$. For $i=1,2$, let $S_i$ be set of points in $S$ that are closer to $X_i$. Then, $\vol(S)=\vol(S_1)+\vol(S_2)$ and $\vol(S_1)=\vol(S_2)$, so $\vol(S_i)=\rho\theta/2$. Moreover, a point $X=(x_1,\ldots,x_d)\in\real^d$ belongs to $S_2$ if and only if $1/2\le x_1\le 1$ and ${x_2}^2+\cdots+{x_d}^2 \le 1-{x_1}^2$, so in particular $X$ belongs to a $(d-1)$-ball of radius $\sqrt{1-{x_1}^2}$. Hence,
\[
\vol(S_2) = \int_{1/2}^1 \theta_{d-1} (1-t^2)^{(d-1)/2}~dt \le \frac{\theta_{d-1}}{2}  \left(\frac{3}{4}\right)^{(d-1)/2},
\]
where the last inequality follows from bounding the integrand by $\theta_{d-1} (3/4)^{(d-1)/2}$, and then
\begin{equation}
\label{eq:rhobound1}
\rho_d = 2\vol(S_2)/\theta_d \le \frac{\theta_{d-1}}{\theta_d}  \left(\frac{3}{4}\right)^{(d-1)/2}.
\end{equation}
Now
\begin{equation}
\label{eq:rhobound2}
\frac{\theta_{d-1}}{\theta_d} = \pi^{-1/2} \frac{\Gamma(d/2+1)}{\Gamma(d/2+1/2)}
\le \sqrt{\frac{d+1}{2\pi}} \le \sqrt{\frac{3d}{4}},
\end{equation}
where in the first inequality above we used a standard bound on $\Gamma(d/2+1)/\Gamma(d/2+1/2)$ from~\cite{Kershaw}. Combining~\eqref{eq:rhobound1} and~\eqref{eq:rhobound2} gives the second bound $\rho\le\sqrt{d} (3/4)^{d/2}$, and finishes the proof of the claim.

Next we give a deterministic result that asserts that any drawing of $K_t$ in the torus with vertices at distance at most $r$ must have many short edges. The proof is deferred to the end of the section.

\begin{claim}\label{clm:3}
Let $t\ge t_0$. For any clique $K_t$ with vertices in $\Td$ where every two vertices have distance at most $r$, at least an $\frac{\eps^d}{(1+\delta/3)\rho\theta}$ fraction of its edges are short.
\end{claim}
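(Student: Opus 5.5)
The plan is a volume-plus-pigeonhole argument. All $t$ vertices are forced into a region of small volume, so they occupy few cells, and convexity then forces many pairs to share a cell. Since $r\le\kappa_3=1/2$, the $t$ points (pairwise at toroidal distance at most $r$) can be lifted to points $P_1,\dots,P_t\in\real^d$ with pairwise Euclidean distances equal to the toroidal ones. We lift the cell grid to the standard periodic grid of side $s$ in $\real^d$. Two lifted points lie in the same grid cell iff the original points lie in the same cell of $\Td$. This holds because a grid cell has diameter $\sqrt d s\le\sqrt d\eps r<1$, so it does not meet two translates of the same point. Strictly, one should also check that distinct grid cells meeting the region below project to distinct torus cells; this holds since that region has diameter less than $1$ for $\eps$ small. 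It therefore suffices to work in $\real^d$.

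\emph{Step 1 (few occupied cells).} Let $K$ be the convex hull of $P_1,\dots,P_t$; it has diameter at most $r$. Every grid cell containing some $P_i$ is contained in the $\sqrt d s$-neighbourhood $K^+$ of $K$. The set $K^+$ has diameter at most $r+2\sqrt d s\le (1+2\sqrt d\eps)r$. By the isodiametric inequality,
\[
\vol(K^+)\le \theta\bigl((1+2\sqrt d\eps)r/2\bigr)^d .
\]
Next we compare $2^{-d}$ with $\rho$. The ball of radius $1/2$ centred at the midpoint of two unit-ball centres at distance $1$ lies in their intersection, so $2^{-d}\le\rho$. The occupied cells are disjoint and lie in $K^+$, so their number $k$ satisfies
\[
k\le \frac{\vol(K^+)}{s^d}\le (1+2\sqrt d\eps)^d\,\frac{\rho\theta r^d}{s^d}\le (1+2\sqrt d\eps)^d(1+\delta/10)\,\frac{\rho\theta}{\eps^d},
\]
using~\eqref{eq:sbound}. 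Choosing $\eps$ small given $d,\delta$ then gives $k\le (1+\delta/5)\rho\theta/\eps^d$.

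\emph{Step 2 (convexity).} Let $a_1,\dots,a_k$ be the numbers of vertices in the occupied cells, so $\sum a_i=t$. The number of short edges is $\sum_i\binom{a_i}{2}$. By Cauchy--Schwarz, $\sum a_i^2\ge t^2/k$, hence
\[
\sum_i\binom{a_i}{2}\ge \frac{1}{2}\Bigl(\frac{t^2}{k}-t\Bigr).
\]
Dividing by $\binom t2$, the short fraction is at least
\[
\frac{t/k-1}{t-1}\ge \frac1k-\frac1{t-1}\ge \frac{\eps^d}{(1+\delta/5)\rho\theta}-\frac1{t-1}.
\]
Since $t\ge t_0$ with $t_0$ large given $d,\delta,\eps$, the error term $1/(t-1)$ is absorbed. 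This gives a fraction at least $\frac{\eps^d}{(1+\delta/3)\rho\theta}$, as claimed.

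\emph{Main obstacle.} The only delicate point is Step 1. One must bound the region swept by the occupied cells, not just the hull, and the boundary layer of width $\sqrt d s$ must cost only a $(1+O(\sqrt d\eps))^d$ factor. This is exactly why $\eps$ is taken small given $d,\delta$. One must also check that using the isodiametric bound (rather than an intersection-of-two-balls argument) is compatible with the stated constant $\rho$, which follows from $2^{-d}\le\rho_d$.
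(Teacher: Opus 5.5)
Your volume-plus-convexity argument is correct and differs from the paper's in Step~1. The paper takes a farthest pair $v,w$, traps all vertices in the lens $S(v,w)$ (enlarged to radius $r$, volume $\rho\theta r^d$), and bounds the cells meeting $S$ by adding two annular shells of width $O(d\eps r)$ around the boundaries of $B(v)$ and $B(w)$. You instead apply the isodiametric inequality to the $\sqrt d\, s$-neighbourhood $K^+$ of the convex hull, so the boundary layer is absorbed into the diameter in one line.

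Your route also proves more than is asked. Before using $2^{-d}\le\rho$ you already have $k\le(1+\delta/5)2^{-d}\theta/\eps^d$, so the claim holds with $2^{-d}$ in place of $\rho_d$. The rest of the proof of Theorem~\ref{thm:neg} then yields $\nu_t\le(1+\delta)2^{-d}m/\binom t2$. This is strictly better for $d\ge2$ (e.g.\ $1/4$ versus $\rho_2\approx0.39$) and exponentially better than the paper's bound $\sqrt d(3/4)^{d/2}$. The paper's route needs only elementary containment, at the cost of this constant.

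The opening lifting step, however, does not hold on the whole range $r\le1/2$: $r\le 1/2$ does not give an isometric lift to $\real^d$. For $r\ge1/3$, the points $0$, $e_1/3$, $2e_1/3$ of $\Td$ are pairwise at toroidal distance $1/3$. An isometric lift would force $P_2-P_1=P_3-P_2=e_1/3$ and $P_3-P_1=-e_1/3$, which is inconsistent.

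For $d=1$ and $r=1/2$ the claim itself fails. Any two points of $\Tone$ are within distance $1/2$, so a clique with equally many vertices in each of the $n_0=\lceil 2/\eps\rceil$ cells has short-edge fraction below $1/n_0\le\eps/2$. The claim demands $\eps/(1+\delta/3)>\eps/2$ (here $\rho_1\theta_1=1$). In fact $\cG_{\Tone}(\bX,1/2)=K_n$, which has $K_t$-packings covering all but $o(n^2)$ edges, so Theorem~\ref{thm:neg} fails there too.

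The paper makes the same unjustified reduction (``edges do not wrap around the torus, so we may as well assume for simplicity that the vertices are in $\real^d$''). So this is a defect of the stated range rather than of your method. It is repaired by requiring $r<1/3$ (say $\kappa_3=1/4$ for the torus). Lift $X_1$ to any $P_1$ and each other $X_i$ to its copy nearest $P_1$. Then $\|P_i-P_j\|\le 2r$, and if $\dist_\Td(X_i,X_j)=\|P_i-P_j+Z\|\le r$ for some $Z\in\ent^d\setminus\{0\}$, we would get $\|Z\|\le 3r<1$, which is impossible. With that restriction, and your check that distinct grid cells meeting $K^+$ project to distinct torus cells, your proof is complete.
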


Now assume that Claim~\ref{clm:3} and the a.a.s.\ conclusion of Claim~\ref{clm:2} hold.
Then, $\nu_t=\nu_t(\cG_\Td(\bX,r))$ and $m=m(\cG_\Td(\bX,r))$ must satisfy
\[
\frac{\eps^d}{(1+\delta/3)\rho\theta}\binom{t}{2} \nu_t \le (1+\delta/3)\eps^d m/\theta,
\]
since the number of short edges in the largest $K_t$-packing must be at most the total number of short edges.
Hence, since $\delta<1$,
\[
\nu_t \le (1+\delta/3)^2 \rho \cdot \frac{m}{\binom{t}{2}} \le (1+\delta) \rho \cdot \frac{m}{\binom{t}{2}},
\]
which, together with the bounds on~$\rho$ from Claim~\ref{claim:rho}, completes the proof of Theorem~\ref{thm:neg}.

It only remains to prove Claim~\ref{clm:3}. To do so, pick any $K_t$ with vertices in $\Td$ and edges of length at most $r$. Since $r\le\kappa_3=1/2$, edges do not wrap around the torus, so we may as well assume for simplicity that the vertices are in $\real^d$. Let $v$ and $w$ be two vertices of $K_t$ that are furthest apart. Suppose they are at distance $x\le r$, and let $S(v,w)$ be the set of points in $\real^d$ that are at distance at most $x$ from both $v$ and $w$, i.e.~$S(v,w)$ is simply the intersection of two balls of radius $x$ centered at $v$ and $w$. All vertices of $K_t$ are contained in $S(v,w)$ by construction. We can assume w.l.o.g.\ that $x=r$ (otherwise we can replace $v,w$ by  some points $v',w'$ in $\real^d$ at distance $r$ with $S(v',w')\supset S(v,w)$). Summarizing, all the vertices of $K_t$ lie inside a set $S=S(v,w)$, which is the intersection of two balls $B(v)$ and $B(w)$ of radius $r$ with centers at distance $r$. So $\vol(S)=\rho\theta r^d$, by definition of $\rho$ and rescaling by a factor of $r$.
Let $\hat S$ be the union of all cells that intersect $S$. By construction, $\hat S$ is contained in the union of $S$, $\hat S_v$ and $\hat S_w$, where $\hat S_v,\hat S_w$ are the sets of all cells that intersect the boundaries of $B(v)$ and $B(w)$, respectively. Since cells have side $s\le\eps r$, the ones that intersect the boundary of $B(v)$ are fully contained in an annulus of center $v$ and radii $(1-d\eps)r$ and $(1+d\eps)r$ (with room to spare).
As a result,
\[
\vol(\hat S_v) = \vol(\hat S_w) \le \theta(1+d\eps)^dr^d - \theta(1-d\eps)^dr^d \le (2d^2\eps)(1+d\eps)^{d-1}\theta r^d,
\]
where we used the bound $(1+z)^d-(1-z)^d\le 2dz(1+z)^{d-1}$, which holds for all real $z\ge0$ and $d\in\nat$ (and follows easily from the Mean Value Theorem). By choosing $\eps$ sufficiently small given $d,\delta$, we can assume that $\vol(\hat S_v), \vol(\hat S_w) \le \delta\rho \theta r^d/20$, so
\[
\vol(\hat S) \le \vol(S)+\vol(\hat S_v)+\vol(\hat S_w) \le (1+\delta/10)\rho\theta r^d.
\]
Now let $k$ be the number of cells contained in $\hat S$. From the above and~\eqref{eq:sbound}, we must have
\begin{equation}
\label{eq:kbound}
k = \vol(\hat S)/s^d \le (1+\delta/10) \vol(\hat S) / (\eps r)^d \le
(1+\delta/10)^2\rho\theta / \eps^d.
\end{equation}
Since the upper bound on $k$ only depends on $d,\delta,\eps$, we can assume from our choice of $t_0$ that
\begin{equation}
\label{eq:kt}
(1-k/t) \ge 1/(1+\delta/10).
\end{equation}
Label the cells in $\hat S$ with numbers from $1$ to $k$. For $i\in[k]$, let $t_i$ denote the number of
vertices of $K_t$ contained in cell $i$. Recall that, by construction, there are no vertices of $K_t$ outside of $\hat S$. So $t_1 + \cdots+ t_k = t$. Then, the number of short edges is (explanation follows)
\[
\sum_{i=1}^k \binom{t_i}{2} \ge k \binom{t/k}{2} =  \frac{t^2}{2k} (1-k/t) \ge
\frac{t^2}{2(1+\delta/10)^3\rho\theta / \eps^d}
\ge \frac{\eps^d}{(1+\delta/3)\rho\theta} \binom{t}{2}.
\]
The first inequality above follows from the convexity of the function $\binom{z}{2}=\frac{z(z-1)}{2}$, the second inequality combines~\eqref{eq:kbound} and~\eqref{eq:kt}, and the third one uses $\delta<1$. This yields Claim~\ref{clm:3} and completes the proof of Theorem~\ref{thm:neg}.
\end{proof}

\begin{remark}\label{rem:constants}
We do not attempt to optimize the constants in the proof of Theorem~\ref{thm:neg}. However, a closer inspection of the argument easily shows that for fixed $\delta$ (say, $\delta=1/2$), one can pick $\eps = 1/d^{O(1)}$ and $t_0=(1/\eps)^{O(d)} = d^{O(d)}$ (as $d\to\infty$) that work.

\end{remark}

\section{Concluding remarks}\label{sec:final}

The obvious open problem is to ``close the gap'' in Corollary~\ref{cor:Tuza}.
\begin{question}
Does Tuza's conjecture hold for the random geometric graph $\cG_*(\bX,r)$ in the regime $\kappa_1 n^{-1/d} \le r \le \kappa_2 n^{-1/d}$?
\end{question}
Theorems~\ref{thm:unitdistance} and~\ref{thm:neg} raise another interesting question which we now discuss. Note that, if $\chi(F) \le k$, then $F$ is contained in a blowup of $K_k$, which is a distance graph in $\mathbb{R}^{k-1}$. Thus $F \in \cU(k-1)$. By Theorem~\ref{thm:unitdistance}, for any dimension $d \ge \chi(F)-1$ and $\kappa_2n^{-1/d} \le  r \le \kappa_3$, a.a.s.\ there is an almost-perfect covering of the edges of $\cG_*(\bX,r)$ with edge-disjoint copies of $F$. 
It is then natural to define $d_0(F)$ to be the smallest value of $d$ for which the random geometric graph $\cG_*(\bX,r)$ admits an almost-perfect $F$-packing a.a.s.~for a ``reasonable'' range of $r$.
By our discussion above, for all $F$ we have the upper bound 
\[
d_0(F) \le \chi(F)-1,
\]
but in fact we can do better for the triangle $K_3$ since we showed in Theorem~\ref{thm:dense} that $d_0(K_3) = 1 = \chi(K_3)-2$.
By Theorem~\ref{thm:neg} and Remark~\ref{rem:constants}, we have the lower bound 
\[
d_0(K_t) = \Omega\rbrac{\frac{\log t}{\log \log t}}
\qquad\text{as }
t\to\infty.
\]
Thus, we ask the following. 
\begin{question}
    Can we determine $d_0(F)$, or at least find good estimates?
\end{question}
\noindent
For example, we have $d_0(K_4) \le \chi(K_4)-1 = 3$. We suspect that $d_0(K_4) = 3$, but it could certainly be smaller as in the triangle case.

\section{Acknowledgements}

The authors would like to thank Alberto Espuny D\'iaz for calling their attention to the fact that Theorem \ref{thm:dense} holds for $d=1$ (the proof we give in this paper is our own). We would also like to thank Alexandra Wesolek for calling our attention to Lucas Burggraf's thesis \cite{Burggraf}.

\bibliographystyle{abbrv}
\bibliography{refs}

@article {Yuster,
    AUTHOR = {Yuster, Raphael},
     TITLE = {Dense graphs with a large triangle cover have a large triangle
              packing},
   JOURNAL = {Combin. Probab. Comput.},
  FJOURNAL = {Combinatorics, Probability and Computing},
    VOLUME = {21},
      YEAR = {2012},
    NUMBER = {6},
     PAGES = {952--962},
      ISSN = {0963-5483,1469-2163},
   MRCLASS = {05C70},
  MRNUMBER = {2981163},
MRREVIEWER = {Shiying\ Wang},
       DOI = {10.1017/S0963548312000235},
       URL = {https://doi.org/10.1017/S0963548312000235},
}

@article {BK,
    AUTHOR = {Baron, Jacob D. and Kahn, Jeff},
     TITLE = {Tuza's conjecture is asymptotically tight for dense graphs},
   JOURNAL = {Combin. Probab. Comput.},
  FJOURNAL = {Combinatorics, Probability and Computing},
    VOLUME = {25},
      YEAR = {2016},
    NUMBER = {5},
     PAGES = {645--667},
      ISSN = {0963-5483,1469-2163},
   MRCLASS = {05C70 (05C50)},
  MRNUMBER = {3531437},
MRREVIEWER = {Steven\ D.\ Noble},
       DOI = {10.1017/S0963548316000067},
       URL = {https://doi.org/10.1017/S0963548316000067},
}

@book{JLR,
  title={Random Graphs},
  author={Janson, Svante and \L uczak, Tomasz and Ruci\'nski, Andrzej},
  series={Wiley Series in Discrete Mathematics and Optimization},
  year={2011},
  publisher={Wiley}
}

@article {K96,
    AUTHOR = {Kahn, Jeff},
     TITLE = {A linear programming perspective on the
              {F}rankl-{R}\"odl-{P}ippenger theorem},
   JOURNAL = {Random Structures Algorithms},
  FJOURNAL = {Random Structures \& Algorithms},
    VOLUME = {8},
      YEAR = {1996},
    NUMBER = {2},
     PAGES = {149--157},
      ISSN = {1042-9832,1098-2418},
   MRCLASS = {05C65 (90C05)},
  MRNUMBER = {1607104},
MRREVIEWER = {Daniel\ Turz\'ik},
       DOI = {10.1002/(sici)1098-2418(199603)8:2<149::aid-rsa5>3.0.co;2-y},
       URL =
              {https://doi.org/10.1002/(sici)1098-2418(199603)8:2<149::aid-rsa5>3.0.co;2-y},
}

@article {BDZ,
    AUTHOR = {Bennett, Patrick and Dudek, Andrzej and Zerbib, Shira},
     TITLE = {Large triangle packings and {T}uza's conjecture in sparse
              random graphs},
   JOURNAL = {Combin. Probab. Comput.},
  FJOURNAL = {Combinatorics, Probability and Computing},
    VOLUME = {29},
      YEAR = {2020},
    NUMBER = {5},
     PAGES = {757--779},
      ISSN = {0963-5483,1469-2163},
   MRCLASS = {05B40 (05C80 05D40)},
  MRNUMBER = {4152570},
MRREVIEWER = {Mehrdad\ Nasernejad},
       DOI = {10.1017/s0963548320000115},
       URL = {https://doi.org/10.1017/s0963548320000115},
}

@article {BCD,
    AUTHOR = {Bennett, Patrick and Cushman, Ryan and Dudek, Andrzej},
     TITLE = {Closing the random graph gap in {T}uza's conjecture through
              the online triangle packing process},
   JOURNAL = {SIAM J. Discrete Math.},
  FJOURNAL = {SIAM Journal on Discrete Mathematics},
    VOLUME = {35},
      YEAR = {2021},
    NUMBER = {3},
     PAGES = {2145--2169},
      ISSN = {0895-4801,1095-7146},
   MRCLASS = {05C70 (05B40 05C80 05D40)},
  MRNUMBER = {4313839},
MRREVIEWER = {Mehrdad\ Nasernejad},
       DOI = {10.1137/20M1351771},
       URL = {https://doi.org/10.1137/20M1351771},
}

@article{DR88,
author = {Dubhashi, Devdatt and Ranjan, Desh},
title = {Balls and bins: A study in negative dependence},
journal = {Random Structures \& Algorithms},
volume = {13},
number = {2},
pages = {99-124},
year = {1998}
}

@article {KP,
    AUTHOR = {Kahn, Jeff and Park, Jinyoung},
     TITLE = {Tuza's conjecture for random graphs},
   JOURNAL = {Random Structures Algorithms},
  FJOURNAL = {Random Structures \& Algorithms},
    VOLUME = {61},
      YEAR = {2022},
    NUMBER = {2},
     PAGES = {235--249},
      ISSN = {1042-9832,1098-2418},
   MRCLASS = {05C80 (05C70 60C05)},
  MRNUMBER = {4456027},
MRREVIEWER = {Mehrdad\ Nasernejad},
       DOI = {10.1002/rsa.21057},
       URL = {https://doi.org/10.1002/rsa.21057},
}

@article{Kershaw,
 author = {Kershaw, Donald},
 journal = {Mathematics of Computation},
 number = {164},
 pages = {607--611},
 publisher = {American Mathematical Society},
 title = {Some Extensions of {W}.~{G}autschi's Inequalities for the {G}amma Function},
 volume = {41},
 year = {1983}
}

@book{Penrose,
  title={Random Geometric Graphs},
  author={Penrose, Mathew},
  series={Oxford studies in probability},
  year={2003},
  publisher={Oxford University Press}
}

@article {tuza2,
    AUTHOR = {Tuza, Zsolt},
     TITLE = {A conjecture on triangles of graphs},
   JOURNAL = {Graphs Combin.},
  FJOURNAL = {Graphs and Combinatorics},
    VOLUME = {6},
      YEAR = {1990},
    NUMBER = {4},
     PAGES = {373--380},
      ISSN = {0911-0119,1435-5914},
   MRCLASS = {05C35},
  MRNUMBER = {1092587},
MRREVIEWER = {Mario\ Gionfriddo},
       DOI = {10.1007/BF01787705},
       URL = {https://doi.org/10.1007/BF01787705},
}

@proceedings {tuza1,
     TITLE = {Finite and infinite sets. {V}ol.\ {I}, {II}},
    SERIES = {Colloquia Mathematica Societatis J\'anos Bolyai},
    VOLUME = {37},
 BOOKTITLE = {Proceedings of the sixth {H}ungarian combinatorial colloquium
              held in {E}ger, {J}uly 6--11, 1981},
    EDITOR = {Hajnal, A. and Lov\'asz, L. and S\'os, V. T.},
 PUBLISHER = {North-Holland Publishing Co., Amsterdam},
      YEAR = {1984},
     PAGES = {902},
      ISBN = {0-444-86763-5},
   MRCLASS = {05-06 (04-06)},
  MRNUMBER = {818224},
}

@article {haxell,
    AUTHOR = {Haxell, P. E.},
     TITLE = {Packing and covering triangles in graphs},
   JOURNAL = {Discrete Math.},
  FJOURNAL = {Discrete Mathematics},
    VOLUME = {195},
      YEAR = {1999},
    NUMBER = {1-3},
     PAGES = {251--254},
      ISSN = {0012-365X,1872-681X},
   MRCLASS = {05C70},
  MRNUMBER = {1663859},
       DOI = {10.1016/S0012-365X(98)00183-6},
       URL = {https://doi-org.libproxy.library.wmich.edu/10.1016/S0012-365X(98)00183-6},
}

@article {az,
    AUTHOR = {Aharoni, Ron and Zerbib, Shira},
     TITLE = {A generalization of {T}uza's conjecture},
   JOURNAL = {J. Graph Theory},
  FJOURNAL = {Journal of Graph Theory},
    VOLUME = {94},
      YEAR = {2020},
    NUMBER = {3},
     PAGES = {445--462},
      ISSN = {0364-9024,1097-0118},
   MRCLASS = {05C65 (05C70)},
  MRNUMBER = {4102610},
MRREVIEWER = {Mehrdad\ Nasernejad},
       DOI = {10.1002/jgt.22533},
       URL = {https://doi-org.libproxy.library.wmich.edu/10.1002/jgt.22533},
}

@article {hr,
    AUTHOR = {Haxell, P. E. and R\"odl, V.},
     TITLE = {Integer and fractional packings in dense graphs},
   JOURNAL = {Combinatorica},
  FJOURNAL = {Combinatorica. An International Journal on Combinatorics and
              the Theory of Computing},
    VOLUME = {21},
      YEAR = {2001},
    NUMBER = {1},
     PAGES = {13--38},
      ISSN = {0209-9683,1439-6912},
   MRCLASS = {05C70},
  MRNUMBER = {1805712},
MRREVIEWER = {R.\ Balakrishnan},
       DOI = {10.1007/s004930170003},
       URL = {https://doi-org.libproxy.library.wmich.edu/10.1007/s004930170003},
}

@article {krivelevich,
    AUTHOR = {Krivelevich, Michael},
     TITLE = {On a conjecture of {T}uza about packing and covering of
              triangles},
   JOURNAL = {Discrete Math.},
  FJOURNAL = {Discrete Mathematics},
    VOLUME = {142},
      YEAR = {1995},
    NUMBER = {1-3},
     PAGES = {281--286},
      ISSN = {0012-365X,1872-681X},
   MRCLASS = {05C70},
  MRNUMBER = {1341453},
       DOI = {10.1016/0012-365X(93)00228-W},
       URL = {https://doi-org.libproxy.library.wmich.edu/10.1016/0012-365X(93)00228-W},
}

@misc{Burggraf,
  author = {Burggraf, Lucas and Wesolek, Alexandra},
  howpublished = "personal communication",
  year = "2025",
}

\end{document}